\newcommand{\be}{\begin{equation}}
\newcommand{\ee}{\end{equation}}
\newcommand{\bqs}{\begin{equation*}}
\newcommand{\eqs}{\end{equation*}}
\newcommand{\gL}{\mathrm{L}}
\newcommand{\gA}{\mathrm{A}}
\newcommand{\gM}{\mathrm{M}} 
\newcommand{\gI}{\mathrm{I}}
\renewcommand{\v}{\mathbf{v}}
\renewcommand{\u}{\mathbf{u}}
\newcommand{\ve}{\mathbf{e}} 
\newcommand{\w}{\mathbf{w}}
\newcommand{\bomega}{\boldsymbol{\omega}}
\newcommand{\mbi}{\mathrm{i}}
\newcommand{\rmd}{\mathrm{d}}
\renewcommand{\L}{\mathcal{L}(W)}
\renewcommand{\O}{\mathcal{O}}
\newcommand{\R}{\mathbb{R}}
\newcommand{\e}{\varepsilon}
\numberwithin{equation}{section}
\newcommand\fH[1]{\sbox0{#1}\dimen0=\ht0 \advance\dimen0 -1ex
  \sbox2{\'{}}\sbox2{\raise\dimen0\box2}%
  {\ooalign{\hidewidth\kern.1em\copy2\kern-.5\wd2\box2\hidewidth\cr\box0\crcr}}}
\theoremstyle{plain}
\newtheorem{theorem}{Theorem}[section]
\newtheorem{proposition}[theorem]{Proposition}
\newtheorem{lemma}[theorem]{Lemma}
\newtheorem{corollary}[theorem]{Corollary}
\newtheorem{rmk}[theorem]{Remark}
\newtheorem{defn}[theorem]{Definition}
\newtheorem{example}[theorem]{Example}
\title{Pattern Formation in Random Networks Using Graphons  }
\author[1,2]{Jason Bramburger}
\author[1,3]{Matt Holzer}
\affil[1]{\small Department of Mathematical Sciences, George Mason University, Fairfax, VA, USA }
\affil[2]{\small Department of Mathematics and Statistics, Concordia University, Montr\'eal, QC, Canada}
\affil[3]{\small Center for Mathematics and Artificial Intelligence (CMAI), George Mason University, Fairfax, VA, USA}
\date{}
\begin{document}
\maketitle

\begin{abstract}
We study Turing bifurcations on one-dimensional random ring networks where the probability of a connection between two nodes depends on the distance between them.  Our approach uses the theory of graphons to approximate the graph Laplacian in the limit as the number of nodes tends to infinity by a nonlocal operator -- the graphon Laplacian.  For the ring networks considered here, we employ center manifold theory to characterize Turing bifurcations in the continuum limit in a manner similar to the classical partial differential equation case and classify these bifurcations as sub/super/trans-critical. We derive estimates that relate the eigenvalues and eigenvectors of the finite graph Laplacian to those of the graphon Laplacian.  We are then able to show that, for a sufficiently large realization of the network, with high probability the bifurcations that occur in the finite graph are well approximated by those in the graphon limit. The number of nodes required depends on the spectral gap between the critical eigenvalue and the remaining ones, with the smaller this gap the more nodes that are required to guarantee that the graphon and graph bifurcations are similar.  We demonstrate that if this condition is not satisfied then the bifurcations that occur in the finite network can differ significantly from those in the graphon limit.

\end{abstract}

{\noindent \bf Keywords:} Turing bifurcation, random graph, graphon, center manifold\\

{\noindent \bf MSC numbers:} 37L10, 35R02, 92C15, 60B20 \\

\section{Introduction }

Diffusive instabilities occur when a stable homogeneous state is destabilized in the presence of diffusion.  These instabilities form a common pathway to the spontaneous self-organization of patterned states.  Alan Turing discovered these bifurcations in work aimed at explaining the emergence of patterns during morphogenesis \cite{turing}, and today these instabilities are often referred to as Turing bifurcations.  Turing bifurcations have been shown to be a dominant mechanism for pattern formation in a number of applications including biology \cite{gierer72,kondo10,othmer71}, chemistry \cite{castets90,ouyang91}, neuroscience \cite{bressloff96,ermentrout79} and ecology \cite{klausmeier99,mimura78} and similar mechanisms are known to drive pattern formation in fluids \cite{cross,newell69} and bacterial aggregation \cite{keller70}.

In this work we will study Turing bifurcations occurring in complex networks described by random graphs with a particular focus on ring networks where the likelihood of connection between nodes is dependent on the distance between them.  An obvious challenge in the study of bifurcations on random graphs is deriving results that hold independently of the realization of the network with high probability.  To accomplish this we will appeal to the theory of graph limits utilizing graphons; see \cite{lovasz12}.  For a fixed random graph model, in the limit as the number of nodes goes to infinity, graphons approximate the adjacency matrix (or in our case the graph Laplacian) of the finite graph by a non-local operator.  The integral  kernel of this operator is called the graphon, which derives from {\em graph function}.  For the ring networks studied here, the graphon exactly describes the probability of connections between nodes as a function of the distance between them.  The question then becomes, essentially, how well we can predict the character of Turing bifurcations in the random graph based only upon some mean-field approximation of the average likelihood of a connection between nodes based upon their distance.  

A number of previous studies have focused on Turing bifurcations in complex networks.  For cellular networks, Turing instabilities were studied in \cite{othmer71} with a particular focus on several classes of regular networks.  In \cite{nakao10}, instability criteria are derived in analogy with the partial differential equation (PDE) theory where the onset of instability can be expressed as a function of the Laplacian eigenvalues.  In \cite{wolfrum12} it is demonstrated that for scale-free networks these bifurcations are typically transcritical and lead to the appearance of highly localized patterns dominated by a small number of differentiated nodes.  Snaking properties of these localized patterns were subsequently analyzed in \cite{mccullen16}.  Further studies of this phenomena include \cite{ide16,moore05,muolo19}, to only name a few.  We also mention recent work on Turing patterns in zebrafish resulting from nonlocal, distance-dependent interactions between cells \cite{kondo17,volkening18}. In a somewhat different direction, the study of localized pattern formation in the discrete spatial setting has been studied previously on infinite chains \cite{Bramburger1D,BramburgerIsola}, square lattices \cite{BramburgerSquare}, and rings \cite{TianRing}.  We also mention in passing the prevalence of non-local, distance dependent coupling in neural field models; see for example \cite{amari77,bressloff96,hopfield84}.

Our approach using graphons in this manuscript is motivated by recent work where graphons are used to approximate the interaction graph in a coupled oscillator problem and to derive bifurcations to synchrony in this mean-field limit, as was done in \cite{chiba18,chiba19}.   Other utilizations of graphons in the study of dynamical systems on random graphs include \cite{k17,medvedev14,medvedev14a} for nonlinear heat equations and \cite{kuehn19} for power networks.  

The theory of graph limits via graphons is the primary analytical tool that we use in this work (in addition to center manifold theory).  Graphons were introduced in \cite{Borgs,Borgs2} and the simplest way to visualize one is as a pixel plot of the adjacency matrix for a graph.  The idea is that, for some classes of random graphs, the sequence of realizations of the graph adjacency matrix will converge to a non-local operator as the number of vertices tends to infinity with many features of the graphs being preserved in the limit.  For our purposes here, we will be interested in how well the eigenvalues and eigenvectors of the random graph Laplacian will be approximated by the eigenvalues and eigenfunctions of the graphon Laplacian.  Informally speaking, in scenarios where this approximation is sufficiently accurate we will be able to show that the bifurcations occurring on the graph closely mimic the bifurcations occurring for the graphon.  More precisely, we will use spectral convergence results presented in \cite{Janson,lovasz12}, modified to the case of the graph Laplacian using the approach of \cite{Oliveira}, followed by an application of a version of the Davis-Kahan Theorem \cite{DK,AltDK} to control the difference between the graph Laplacian eigenvectors and the graphon eigenfunctions.    We refer the interested reader to \cite{Borgs,Borgs2,Glasscock,Janson,lovasz12,lovasz06} and references therein for more complete introductions to the theory of graph limits and graphons.  

Let us now set the stage for the problem considered herein. Consider a network described as a connected, un-weighted, un-directed graph $G=(V,E)$ with $|V|=N$ vertices.   In this paper, we will study the following system of ordinary differential equations, known as the Swift-Hohenberg equation, and given by
\be \label{eq:main} 
	\frac{d\u}{dt}=-(\gL-\kappa\gI)^2\u+\e \u +r \u\circ \u -b \u \circ \u\circ \u, \qquad \u = \u(t) \in \R^N
\ee
Here $\kappa<0$, $r\in\mathbb{R}$ and $b>0$ are parameters and $\u\circ \u$ is the Hadamard, or component-wise, product of vectors.  The matrix $\mathrm{L}$ is the graph Laplacian associated to the graph $G$ which describes interactions between connected elements on the graph. Throughout this work we will exclusively work with the combinatorial Laplacian defined by $\mathrm{L}=\mathrm{A}-\mathrm{D}$, where $\mathrm{A}$ is the {\em adjacency matrix} and $\mathrm{D}$ is the diagonal matrix composed of row sums of $\mathrm{A}$, termed the {\em degree matrix}.  {\color{black} In the PDE setting where the graph Laplacian $\mathrm{L}$ is replaced by the Laplacian differential operator, the Swift-Hohenberg equation is a model equation for systems undergoing a Turing bifurcation \cite{cross}, thus making it an optimal phenomenological model for our study of such bifurcations in the discrete spatial setting of graph networks.   The parameter $\kappa<0$ tunes which eigenvalues of the graph Laplacian $\mathrm{L}$ are unstable for (\ref{eq:main}), as we observe next.  }

{\color{black}The criterion for diffusive instability in \eqref{eq:main} is analogous to that of the PDE case, as described in \cite{nakao10}.  The graph Laplacian $\gL$ has eigenvalues $\lambda_k\leq 0$ and normalized eigenvectors $\v_k$, and so letting $\gM=-(\gL-\kappa\gI)^2$ means that the $\v_k$ remain eigenvectors of $\gM$, now with eigenvalues $\ell_k= -(\lambda_k-\kappa)^2$, expanded as $\ell_k = -\lambda_k^2+2\kappa\lambda_k-\kappa^2$. The importance of this is that the linearization of \eqref{eq:main} about the trivial homogeneous state $\u = 0 \in \mathbb{R}^N$ has eigenvalues $\ell_k + \varepsilon$, and so taking $\kappa = \lambda_k$ for some $k$ causes this state to lose stability as $\varepsilon$ crosses zero. This instability leads to a bifurcation in \eqref{eq:main} for which a branch of heterogeneous steady-states resembling $\v_k$ for small $\varepsilon > 0$ emerge from the bifurcation point at $\varepsilon = 0$. The emergence of such a heterogenous state is a type of spontaneous pattern formation, while the bifurcation that this results from is a Turing bifurcation. In this manuscript we are interested in understanding what types of patterns can form via such Turing bifurcations, particularly in the case of random graph networks, and how their bifurcation curves depend on the network topology encoded in $\mathrm{L}$ in the Swift--Hohenberg equation \eqref{eq:main}.}

Since \eqref{eq:main} is a system of nonlinear ODEs we may apply center manifold reduction techniques to characterize the bifurcation that occurs.  When $\gL$ is the graph Laplacian for an arbitrary complex network it is typically the case that the bifurcation occurring at $\e=0$ has co-dimension one and, owing to the preservation of the zero solution, the reduced equation on the {\color{black} one-dimensional} center manifold then takes the form 
\be\label{CenterIntro}
	 \frac{dw_1}{dt}=\e w_1+a_2 w_1^2+a_3w_1^3 +\mathcal{O}( w_1^4, w_1^2\e), 
\ee
for some constants $a_2$ and $a_3$.  The bifurcation can then be classified as transcritical if $a_2\neq 0$ and as a sub (super)-critical pitchfork if $a_2=0$ and $a_3>0$  ($a_3<0$). If $\v_k$ is the marginally stable eigenvector then the coefficient $a_2=r \v_k^T (\v_k\circ \v_k)$ measures interactions of the bifurcating mode with itself under the quadratic coupling term. {\color{black}Therefore, we see that determining the precise nature of the eigenvectors of the graph Laplacian $\gL$ is critical to not only understanding what pattern is emerging from the Turing bifurcation, but also to understanding the nature of the bifurcation itself. }

We emphasize that for complex networks we expect $a_2\neq 0$ and therefore the resulting bifurcation is generically transcritical. This is exactly what the analysis for scale-free networks has revealed \cite{wolfrum12}.  However, an important distinction exists for ring networks studied herein.  We will show that for sufficiently large realizations of these networks the isolated eigenvalues possess eigenvectors that are approximated by discrete Fourier basis vectors.  Such Fourier basis vectors have the property that they do not exhibit any self-interaction under the quadratic coupling in \eqref{eq:main} -- hence $a_2=0$ for the mean-field limit. This will be one of the main analytical results of this paper. Informally, if our random graph Laplacian has a sufficiently isolated eigenvalue and the number of nodes is sufficiently large, then the corresponding eigenvector will, {\color{black} with high probability}, be well approximated by a discrete Fourier basis vector and therefore $a_2\approx 0$. From \eqref{CenterIntro}, the accompanying Turing bifurcation on the random graph is transcritical with a saddle-node bifurcation nearby, coming from perturbing the mean-field pitchfork bifurcation. Theorem~\ref{thm:mainrandom} below provides a precise statement of this result.  

%
%
%

The remainder of the paper is organized as follows.  In Section~\ref{sec:graphon} we review the notion of a graphon and derive our main approximation results that relate the eigenvalues and eigenvectors of the random graph Laplacian to the spectrum and eigenfunctions of the graphon Laplacian operator.  In Section~\ref{sec:TuringonW}, we apply infinite dimensional center manifold theory to classify bifurcations for the non-local graphon Laplacian operator.  In Section~\ref{sec:Turinggraph}, we apply the results from Section~\ref{sec:graphon} to derive analogous bifurcation results on for the random graph Laplacian for realizations of the random network with a sufficiently large number of nodes.  In Section~\ref{sec:numerics}, we study some examples numerically both to lend credence to our main results and to demonstrate what happens when the bifurcating eigenvalue is not isolated or an insufficiently large realization of the network is taken. We conclude with a discussion of our results and areas for future exploration in Section~\ref{sec:Discussion}.


\section{Graphons as Graph Limits} \label{sec:graphon}

As stated in the introduction, our goal is to investigate bifurcations from the trivial state of the spatially discrete Swift--Hohenberg equation \eqref{eq:main}. In particular, we are interested in large graphs, $|V| = N \gg 1$, and therefore we turn to understanding their formal limiting objects as $N \to \infty$. In the following subsection we will review the relevant definitions on graph limits, termed graphons, and then in the subsections that follow we present a number of auxiliary results that describe the limiting behaviour of the spectrum of \eqref{eq:main} linearized about $\u = 0$ in the large $N$ limit. We take the perspective that we start with the limiting graphon and show how it can be used to define both deterministic and random graphs, which are treated separately in Section~\ref{subsec:Deterministic} and Section~\ref{subsec:Random}, respectively. Our review of graphons is in no way meant to be complete and therefore we direct the interested reader to \cite{Borgs,Borgs2,Glasscock,Janson,lovasz12,lovasz06} for a more thorough introduction and treatment of them.


\subsection{Preliminaries}

At the most general level a {\em graphon} is a symmetric Lebesgue-measurable function $W:[0,1]^2 \to [0,1]$. {\color{black} Our results will require that $W$ is continuous almost-everywhere in $[0,1]^2$, however the exposition in this subsection will not yet require this property.} A simple consequence of the fact that $0 \leq W \leq 1$ is that $W$ has finite $L^p := L^p([0,1]^2)$ norm,
\be
 ||W||_p := \left(\int_{[0,1]^2} |W(x,y)|^p \rmd x \rmd y\right)^{\frac{1}{p}}\leq 1, 
\ee 
for all $p \in [1,\infty)$ and similarly $\|W\|_\infty \leq 1$. The term graphon derives from ``graph function'' and they have the property that they approximate the pixel plot of an adjacency matrix in the limit as the number of vertices tends to infinity. Hence, this formalism can be used to obtain a non-local version of the graph Laplacian, denoted $\mathcal{L}(W)$ and termed a {\em graphon Laplacian} throughout, acting on functions $f:[0,1] \to \mathbb{C}$ by
\be\label{GraphonLap}
	\mathcal{L}(W) f(x) = \int_0^1 W(x,y) \left(f(y)-f(x)\right)\rmd y, \quad \forall x \in [0,1]. 
\ee
With the definition of a graphon Laplacian comes the notion of the degree of a vertex $x \in [0,1]$, given by
\begin{equation}
	\mathrm{Deg}(W)(x) := \int_0^1 W(x,y)\rmd y, 
\end{equation}
which can be interpreted as the formal limit of the degree matrix associated to a graph Laplacian.

Such graphon Laplacians can be seen as operators acting between $L^p$ spaces, and so we will consider the norm for linear operators mapping $L^p \to L^q$, given by
\be\label{OperatorNorm}
	\|\mathcal{L}(W)\|_{p\to q} := \sup_{\|f\|_p = 1} \|\mathcal{L}(W) f\|_q.
\ee 
In what follows we will primarily be concerned with the case $p = q = 2$ since $L^2$ is a Hilbert space, thus allowing one to project onto eigenspaces. Moreover, notice that the symmetry $W(x,y) = W(y,x)$ of the graphon $W$ implies that $\mathcal{L}(W) : L^2 \to L^2$ is self-adjoint. One may use the uniform boundedness of $\|W\|_p$ to verify that $\|\mathcal{L}(W)\|_{p\to q} < \infty$ for all $p,q\in[1,\infty]$, thus making the graphon Laplacian operator continuous from any $L^p$ space into another. 

Following \cite{chiba19}, we will use a graphon $W$ to construct finite graphs with $N$ vertices in two different ways. In both cases the unit interval is discretized with $x_j=\frac{j}{N}$ with $j = 1,\dots, N$. A {\em deterministic weighted graph} with adjacency matrix $\gA_d^N = [A_{i,j}^N]_{1 \leq i,j\leq N}$ is obtained by setting 
\be
	A_{ij}^N = \begin{cases}
		W\left(\frac{i}{N},\frac{j}{N}\right) & i \neq j \\
		0 & i = j
		\end{cases}.
\ee
The restriction that $A_{i,i}^N = 0$ means that the associated graph does not contain any loops (edges that originate and terminate at the same vertex). This restriction is merely for the ease of presentation since it can be observed in \eqref{GraphonLap} that the diagonal terms have no effect in the graphon Laplacian. Similarly, these loops have no effect when moving to a finite graph and considering the associated (combinatorial) graph Laplacian
\be\label{detLap}
	\gL_d^N := \gA_d^N - \mathrm{Deg}(\gA_d^N),
\ee  
where $\mathrm{Deg}(\gA_d^N)$ is the diagonal degree matrix whose entries are the sums of the rows of $\gA_d^N$. 

Alternatively, we may obtain a {\em random graph} by considering a symmetric binary adjacency matrix $\gA_r^N = [\xi_{i,j}^N]_{1 \leq i,j\leq N}$ whose elements $\xi_{i,j}^N \in \{0,1\}$ with $i > j$ are {\color{black} independent} random variables with probability distribution 
\be\label{eq:random} 
	\mathbb{P}\left( \xi_{i,j}^N=1\right) = 1 - \mathbb{P}\left( \xi_{i,j}^N=0 \right) = W\left(\frac{i}{N},\frac{j}{N}\right), \qquad \xi_{i,j}^N = \xi_{j,i}^N
\ee
for $i > j$ and $\xi_{i,i}^N = 0$ for all $1 \leq i \leq N$. Setting the diagonal elements of $A_r^N$ to zero again has the effect that we do not consider loops in the resulting graph. As above, this adjacency matrix leads to a graph Laplacian operator associated to a graph with $N$ vertices, denoted
\be\label{randLap}
	\gL_r^N := \gA_r^N - \mathrm{Deg}(\gA_r^N),
\ee 
where $\mathrm{Deg}(\gA_r^N)$ is again the diagonal degree matrix whose entries are the sums of the rows of $\gA_r^N$. It should be noted that we may further consider the points $x_i \in [0,1]$ to be drawn randomly \cite{Borgs,medvedev14} and obtain the deterministic and random graphs as above, but we refrain from doing this for the ease of presentation. The reader should keep in mind that all of the results presented in the following subsections will carry over to the randomly chosen $x_i$, should an application necessitate it.  

Let us consider some motivating examples.

\begin{example} \label{ex:RGmodels} 

\begin{enumerate}
	\item For some $p \in [0,1]$ we can define an {\bf Erd\fH{o}s--R\'eyni} graph using the graphon $W(x,y) = p$ for all $x,y \in [0,1]$. In this case the associated deterministic weighted graph on $N$ vertices has every vertex connected to every other vertex with an edge weight of $p$, while the random graph assigns an edge of weight 1 between any two vertices with probability $p$.	

	\item For parameters $p,q,\alpha \in [0,1]$ the graphon 
	\be
		W(x,y)=\begin{cases}
			 p & \mathrm{if}\ |x-y| \mod 1 \leq \alpha \\ 
			 q & \mathrm{if}\  |x-y| \mod 1 >  \alpha 
			\end{cases}	
	\ee
	is used to generate a {\bf small-world}, or {\bf Watts--Strogatz}, graph. The resulting deterministic graph is a ring such that each vertex is connected to all others, but vertices within a distance of $\alpha$ (with respect to the ring distance) have edge weight $p$, otherwise the edge weight is $q$. In the random graph case the probability of having a connection between two vertices is $p$ if they are within a distance of $\alpha$, otherwise the probability is $q$.
	
	\item For some $p \in (0,1]$ and $\alpha \in (0,1)$ a {\bf bipartite} graph can be formed through the graphon 
	\be
		W(x,y) = \begin{cases}
			p & \mathrm{if}\ \min\{x,y\} \leq \alpha,\ \max\{x,y\} > \alpha \\
			0 & \mathrm{otherwise}
		\end{cases}.
	\ee
	The resulting deterministic graph has an edge with weight $p$ between each $x_i \in [0,\alpha]$ and $x_j \in (\alpha,1]$, while the random graph assigns such edges with probability $p$. No edges are present between two $x_i$ that both belong to either $[0,\alpha]$ or $(\alpha,1]$.  
\end{enumerate}
\end{example} 

{\color{black} In this work we will focus exclusively on ``ring'' networks, which are given by the following definition.

\begin{defn}
	A graphon $W(x,y)$ is said to be a {\bf ring graphon} if there exists a 1-periodic function $R: [0,1] \to [0,1]$ such that $W(x,y) = R(|x - y|)$.   
\end{defn}

Ring graphons have the property that 
\begin{equation}\label{DegreeFn}
	\mathrm{Deg}(x) = \int_0^1 R(|x - y|) \rmd y = \int_0^1 R(|y|) \rmd y, \quad \forall x\in [0,1], 
\end{equation} 
by the periodicity of $R(|x - y|)$ induced by considering the argument modulo 1. Hence, the degree function $\mathrm{Deg}(x)$ is independent of $x \in [0,1]$, which is the graphon analogue of a regular graph. A similar argument shows that for the induced deterministic graph we have $\mathrm{Deg}(\gA_d^N)$ is a constant multiple of the identity, thus making it a degree-uniform graph. The election to study ring networks in this work is motivated by both analysis and application. First, our analysis in the following subsections, particularly Corollary~\ref{cor:CloseGraph} and Theorem~\ref{thm:Random}, heavily rely on the fact that  $\mathrm{Deg}(x)$ is constant. Second, the periodicity of ring graphons means that it can be expanded in a Fourier series, which we will show in Section~\ref{sec:TuringonW} allows one to easily compute the eigenvalues of the associated graphon Laplacian. Third, Erd\fH{o}s--R\'eyni and small-world graphons are some of the most commonly studied graphons, both of which are ring networks, meaning that our work is applicable to both of these graphons. However, the third of our examples, the bipartite graphon, is not a ring graphon and is excluded from the analysis that follows. We note however that the degree function for the bipartite graph is also independent of $x$ when $\alpha = \frac{1}{2}$, and the methods in the following sections can easily be applied here as well. Unfortunately, it is not apparent what the larger class of graphons that the bipartite graphons belong to that lead to the same results in the following subsections and we therefore restrict our attention to ring graphons.    }


\subsection{Deterministic Graphs}\label{subsec:Deterministic}

As an intermediate step to relating the spectral properties of the deterministic weighted graph Laplacian to the graphon Laplacian, we will define a step function graphon that interpolates the discrete domain of the graph to form a step function graphon. To this end we partition $[0,1]$ into $N$ subintervals 
\be
	I_1^{N} = \bigg[0,\frac{1}{N}\bigg), \quad I_2^{N} = \bigg[\frac{1}{N},\frac{2}{N}\bigg), \quad \dots \quad I_N^{N} = \bigg[\frac{N-1}{N},1\bigg).
\ee
{\color{black} Let us now consider a ring graphon $W(x,y) = R(|x - y|)$ and} define a step function $W^N_d:[0,1]^2 \to [0,1]$ by 
\be\label{StepGraphon}
	W^N_d(x,y) =R\bigg(\frac{|i - j|}{N}\bigg)\quad \mathrm{for}\quad (x,y) \in I_i^{N}\times I_j^{N}.
\ee
It is important to note that since we are defining $W^N_d$ from a ring graphon it is itself a degree-uniform graphon for each $N \geq 1$, and so the associated graphon Laplacian, denoted $\mathcal{L}_d^N$, acts by 
\be\label{StepLap}
	\mathcal{L}_d^N f(x) := \int_0^1 W^N_d(x,y) \left(f(y)-f(x)\right)\rmd y, \quad \forall x \in [0,1],
\ee
analogous to \eqref{GraphonLap} above. The relationship between $\mathcal{L}_d^N$ and $\gL_d^N$ is made explicit with the following lemma. 

\begin{lemma} \label{lem:StepEigs} 
	For each $N \geq 2$, $\lambda \in \R$ is an eigenvalue of $\gL_d^N:\R^N \to \R^N$ if and only if $\lambda/N \in \R$ is an eigenvalue of $\mathcal{L}_d^N:L^2 \to L^2$. 
\end{lemma}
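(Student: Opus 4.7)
The plan is to establish the eigenvalue correspondence by identifying $\mathbb{R}^N$ with the $N$-dimensional subspace of step functions in $L^2$ adapted to the partition $\{I_i^N\}_{i=1}^N$. Define the embedding $\iota:\mathbb{R}^N \to L^2$ by $\iota(\v)(x) = v_i$ for $x \in I_i^N$; its image $V_0$ is the $L^2$ subspace of functions constant on each subinterval.

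The crux of the argument is the intertwining identity $\mathcal{L}_d^N \circ \iota = \iota \circ (\gL_d^N/N)$, which I would verify by direct computation. For $\v \in \mathbb{R}^N$ and $x \in I_i^N$, the piecewise constancy of $W_d^N$ on each rectangle $I_i^N \times I_j^N$ together with $|I_j^N| = 1/N$ gives
\begin{equation*}
\mathcal{L}_d^N(\iota(\v))(x) = \tfrac{1}{N}\sum_{j=1}^N R(|i-j|/N)(v_j-v_i) = \tfrac{1}{N}\sum_{j\neq i} A_{ij}^N(v_j-v_i) = \tfrac{1}{N}(\gL_d^N \v)_i,
\end{equation*}
using that the $j=i$ term vanishes and $A_{ij}^N = R(|i-j|/N)$ for $i\neq j$. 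The forward direction of the lemma follows immediately: an eigenpair $(\lambda,\v)$ of $\gL_d^N$ produces the nonzero eigenfunction $\iota(\v)$ of $\mathcal{L}_d^N$ with eigenvalue $\lambda/N$.

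For the converse I would use the orthogonal decomposition $L^2 = V_0 \oplus V_0^{\perp}$, where $V_0^{\perp}$ is the subspace of functions with zero average on each $I_j^N$. A short computation shows that $\mathcal{L}_d^N$ also preserves $V_0^{\perp}$ and acts there as a scalar: for $g \in V_0^{\perp}$, the integral $\int_0^1 W_d^N(x,y) g(y) \rmd y$ collapses into a sum of vanishing subinterval integrals of $g$, so only the degree contribution remains and $\mathcal{L}_d^N g = -D_N g$, where $D_N = \tfrac{1}{N}\sum_{k=0}^{N-1} R(k/N)$ is the $x$-independent degree of $W_d^N$ inherited from the ring-graphon property recorded in \eqref{DegreeFn}. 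Thus $\mathcal{L}_d^N$ is block-diagonal with respect to $L^2 = V_0 \oplus V_0^{\perp}$, acting as $\tfrac{1}{N}\gL_d^N$ on the first summand (via $\iota$) and as the scalar $-D_N$ on the second. Pulling any eigenfunction with eigenvalue $\mu \neq -D_N$ back through $\iota^{-1}$ then recovers an eigenvector of $\gL_d^N$ with eigenvalue $N\mu$.

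The one point requiring a little care is the exceptional value $\mu = -D_N$: because $V_0^{\perp}$ is infinite-dimensional, $-D_N$ always lies in the point spectrum of $\mathcal{L}_d^N$, so the biconditional must be interpreted so that this contribution either coincides with some $\lambda_k/N$ arising from $\gL_d^N$ or is understood modulo the $V_0^{\perp}$ factor. Once this bookkeeping is handled, the full equivalence is an immediate consequence of the intertwining identity above and the invariance of the two orthogonal summands.
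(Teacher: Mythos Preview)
Your forward direction is essentially identical to the paper's: both define the step-function embedding and verify $\mathcal{L}_d^N\,\iota(\v) = \tfrac{1}{N}\,\iota(\gL_d^N\v)$ by direct computation on each $I_i^N$.

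For the converse the paper takes a more bare-hands route than you do. Starting from $\mathcal{L}_d^N f = \lambda f$ it rearranges to
\[
\Bigl[\lambda + \tfrac{1}{N}\textstyle\sum_{j} R\bigl(|i-j|/N\bigr)\Bigr]\, f(x) \;=\; \text{(expression independent of $x$ on $I_i^N$)},
\]
and concludes that $f$ must be constant on each $I_i^N$, hence already a step function; the vector of its values is then an eigenvector of $\gL_d^N$. Your block-diagonalisation $L^2 = V_0 \oplus V_0^{\perp}$ is a bit more structural and makes the full spectral picture explicit, at the cost of one extra computation (the action on $V_0^{\perp}$). Either argument is fine.

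You are right to flag the exceptional value $\mu = -D_N$. The paper's rearrangement step implicitly divides by the bracketed coefficient $\lambda + D_N$; when $\lambda = -D_N$ that step is vacuous, and your decomposition shows that $-D_N$ is \emph{always} an eigenvalue of $\mathcal{L}_d^N$ (with infinite-dimensional eigenspace $V_0^{\perp}$) whereas $-N D_N$ need not be an eigenvalue of $\gL_d^N$. So the biconditional as stated is not literally correct at this one value, and the paper's proof glosses over it. This does not harm the downstream results, since $-D_N \to -c_0$ is the accumulation point of the graphon spectrum and the lemma is only applied to eigenvalues isolated away from $-c_0$; but your caveat is genuine and worth recording.
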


\begin{proof}
Throughout this proof we fix $N \geq 2$. Begin by assuming that $(\lambda,\v) \in \R \times \R^N$ is an eigenpair of $\gL_d^N$, i.e. $\gL_d^N\v = \lambda \v$. Then, define the step function 
\be
	f_v(x) = \sum_{i = 1}^{N} v_i \chi_{I_i}(x)
\ee  
where $\v = [v_1,v_2,\dots,v_N]^T$ and $\chi_{I_i}(x)$ is the characteristic function of the interval $I_i^N$. Clearly $f_v$ is square integrable since it takes only finitely many values. Then, for each $1 \leq i \leq N$ and $x \in I_i^N$ we have 
\be\label{StepEig}
	\begin{split}
		\mathcal{L}_d^N f_v(x) &= \int_0^1 W^N_d(x,y) \left(f_v(y)-f_v(x)\right)\rmd y \\ &= {\color{black}\sum_{j = 1}^{N} \frac{1}{N}R\bigg(\frac{|i - j|}{N}\bigg)(v_j - v_i) }\\ &= \frac{1}{N} [\gL_d^N\v]_i \\ &= \frac{\lambda}{N}v_i \\ &= \frac{\lambda}{N}f_v(x).	
	\end{split}
\ee    
Hence, $(\lambda/N,f_v) \in \R\times L^2$ is an eigenpair of $\mathcal{L}_d^N:L^2 \to L^2$, proving the first direction.

Now let us assume that $(\lambda,f) \in \R\times L^2$ is an eigenpair of $\mathcal{L}_d^N: L^2 \to L^2$. Then, for each $1 \leq i \leq N$ and $x \in I_i$ we have 
\be
	\begin{split}
		\lambda f &= \mathcal{L}_d^N f(x) \\
		&= \int_0^1 W^N_d(x,y) \left(f(y)-f(x)\right)\rmd y \\
		&= {\color{black} \sum_{j = 1}^{N} R\bigg(\frac{|i - j|}{N}\bigg)\int_{I_j^N} f(y)\rmd y - \frac{1}{N}\sum_{j = 1}^{N} R\bigg(\frac{|i - j|}{N}\bigg)f(x). }
	\end{split}
\ee
Rearranging the above expression gives
\be
	{\color{black} \bigg[\lambda + \frac{1}{N}\sum_{j = 1}^{N} R\bigg(\frac{|i - j|}{N}\bigg)\bigg]f(x) = \sum_{j = 1}^{N} R\bigg(\frac{|i - j|}{N}\bigg)\int_{I_j^N} f(y)\rmd y, }	
\ee
and since the right-hand-side is independent of $x\in I_i^N$, it follows that $f(x)$ is constant on $I_i^N$ for each $1 \leq i \leq N$. Therefore, there exists $v_i \in \R$, $i = 1,2,\dots, N$, such that  
\be
	f(x) = \sum_{i = 1}^{N} v_i \chi_{I_i^N}(x).
\ee
Defining $\v = [v_1,v_2,\dots,v_N]^T \in \R^N$, it now follows from simply rearranging the equalities in \eqref{StepEig} that $(N\lambda, \v) \in \R\times\R^N$ is an eigenpair of $\gL_d^N:\R^N\to\R^N$. This completes the proof.
\end{proof} 

Lemma~\ref{lem:StepEigs} shows that there is a one-to-one correspondence between the eigenvalues of $\mathcal{L}_d^N$ and $\gL_d^N$. Therefore, our goal in the remainder of this section is to show that the spectrum of $\mathcal{L}_d^N$ converges to the spectrum of $\mathcal{L}(W)$ as $N \to \infty$, allowing one to view the eigenvalues of $\mathcal{L}(W)$ as the limit of eigenvalues of $\gL_d^N$ (after rescaling by $1/N$). This will be achieved by showing that $\mathcal{L}_d^N$ converges to $\mathcal{L}(W)$ in the $L^2 \to L^2$ operator norm, which in turn gives the convergence of the spectra. We present the following lemma that allows one to estimate the distance between graphon Laplacians in the operator norm topology, for which our desired convergence result is a corollary. {\color{black} Note that this result does not require the graphons to be ring graphons, while Corollary~\ref{cor:CloseGraph} below will build upon this lemma in the particular case of almost everywhere continuous ring graphons.} 

\begin{lemma}\label{lem:CloseGraph} 
	Let $W_1$ and $W_2$ be graphons with associated graphon Laplacians $\mathcal{L}(W_1)$ and $\mathcal{L}(W_2)$. Then, 
	\be
		\|\mathcal{L}(W_1) - \mathcal{L}(W_2)\|_{2\to 2} \leq \sqrt{2}\|W_1 - W_2\|_1^{1/2} + \|\mathrm{Deg}(W_1) - \mathrm{Deg}(W_2)\|_\infty. 
	\ee 
\end{lemma}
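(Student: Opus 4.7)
The plan is to decompose the graphon Laplacian into its two natural pieces: an integral (adjacency-like) part and a multiplication (degree) part. Explicitly, for any graphon $W$ we may write
\be
 \mathcal{L}(W)f(x) = (T_W f)(x) - \mathrm{Deg}(W)(x) f(x),
\ee
where $T_W f(x) := \int_0^1 W(x,y) f(y)\,\rmd y$. Subtracting the analogous identity for $W_2$ gives
\be
 \mathcal{L}(W_1) - \mathcal{L}(W_2) = T_{W_1 - W_2} - M_{\mathrm{Deg}(W_1) - \mathrm{Deg}(W_2)},
\ee
with $M_g$ denoting pointwise multiplication by $g$. An application of the triangle inequality for $\|\cdot\|_{2\to 2}$ then reduces the problem to separately bounding the integral-operator piece and the multiplication piece.

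The multiplication piece is immediate: for any $g \in L^\infty([0,1])$, one has $\|M_g\|_{2 \to 2} = \|g\|_\infty$, which directly produces the $\|\mathrm{Deg}(W_1) - \mathrm{Deg}(W_2)\|_\infty$ contribution. For the integral-operator piece I would use the standard Hilbert--Schmidt bound: applying Cauchy--Schwarz inside the defining integral shows $\|T_K\|_{2 \to 2} \leq \|K\|_2$ for any symmetric kernel $K \in L^2([0,1]^2)$. Hence
\be
 \|T_{W_1 - W_2}\|_{2 \to 2} \leq \|W_1 - W_2\|_2.
\ee

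The key remaining observation is how to convert an $L^2$ norm of the kernel into the $L^1$ norm that appears in the statement. Since both $W_1$ and $W_2$ take values in $[0,1]$, we have the pointwise bound $|W_1 - W_2| \leq 2$, so $|W_1 - W_2|^2 \leq 2|W_1 - W_2|$ almost everywhere. Integrating yields $\|W_1 - W_2\|_2^2 \leq 2\|W_1 - W_2\|_1$, i.e.\ $\|W_1 - W_2\|_2 \leq \sqrt{2}\,\|W_1 - W_2\|_1^{1/2}$. Combining this with the Hilbert--Schmidt bound and the multiplication bound then gives the stated inequality.

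I do not foresee a serious obstacle here; the only subtlety is choosing to measure the integral-operator piece in the $L^2$ kernel norm (rather than, say, using Schur's test), so that the pointwise uniform bound on graphons can be exploited to trade an $L^2$ estimate for the desired $L^1$ estimate at the cost of the square-root and the factor $\sqrt{2}$. This trade is precisely what lets the conclusion be phrased in the $L^1$ graphon metric, which is the natural metric in which the step-function approximants $W_d^N$ converge to $W$ under almost-everywhere continuity in the sequel.
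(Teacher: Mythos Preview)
Your proof is correct and reaches the same decomposition as the paper, but the way you bound the integral-operator piece $T_{W_1-W_2}$ is genuinely different. The paper first bounds $\|T_{W_1-W_2}\|_{\infty\to 1}\le \|W_1-W_2\|_1$ via H\"older and then invokes an interpolation-type result (\cite[Lemma~E.6]{Janson}) to obtain $\|T\|_{2\to 2}\le \sqrt{2}\,\|T\|_{\infty\to 1}^{1/2}$. You instead use the elementary Hilbert--Schmidt inequality $\|T_K\|_{2\to 2}\le \|K\|_2$ and then the pointwise bound $|W_1-W_2|^2\le 2|W_1-W_2|$ to convert $\|W_1-W_2\|_2$ into $\sqrt{2}\,\|W_1-W_2\|_1^{1/2}$. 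Your route is more self-contained (no external lemma needed) and in fact sharper: since $W_1,W_2\in[0,1]$ one actually has $|W_1-W_2|\le 1$, so your argument yields the constant $1$ rather than $\sqrt{2}$ if pushed. The paper's route, on the other hand, is the one that generalizes naturally to the cut-norm setting used elsewhere in graph-limit theory.
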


\begin{proof}
	Since $\mathcal{L}(W_1)$ and $\mathcal{L}(W_2)$ are linear operators, it follows that 
	\be
		\begin{split}
			(\mathcal{L}(W_1) - \mathcal{L}(W_2))f(x) &= \int_0^1 [W_1(x,y) - W_2(x,y)] \left(f(y)-f(x)\right)\rmd y \\
			&= \int_0^1 [W_1(x,y) - W_2(x,y)]f(y)\rmd y - [\mathrm{Deg}(W_1)(x) - \mathrm{Deg}(W_2)(x)]f(x), 
		\end{split}
	\ee
	for all $x \in [0,1]$. Then, using H\"older's inequality we can find that for all $f \in L^\infty$ we have 
	\be 
		\begin{split}
			\bigg\|\int_0^1 [W_1(x,y) - W_2(x,y)]f(y)\rmd y\bigg\|_1 &\leq \|W_1 - W_2\|_1\|f\|_\infty.
		\end{split}
	\ee
	Hence, denoting the linear operator $f \mapsto \int_0^1 [W_1(x,y) - W_2(x,y)]f(y)\rmd y$ by $T$ gives the operator norm bound $\|T\|_{\infty \to 1} \leq \|W_1 - W_2\|_1$. From \cite[Lemma~E.6]{Janson} we get 
	\begin{equation}
		\|T\|_{2 \to 2} \leq \sqrt{2}\|T\|_{\infty \to 1}^{1/2} \leq \sqrt{2}\|W_1 - W_2\|_1^{1/2}.
	\end{equation} 

Then, for all $f \in L^2$ we have that 
	\begin{equation}
		\begin{split}
			\|(\mathcal{L}(W_1) - \mathcal{L}(W_2))f\|_2 &\leq \|Tf\|_2 + \|[\mathrm{Deg}(W_1) - \mathrm{Deg}(W_2)]f\|_2 \\
			&\leq \sqrt{2}\|W_1 - W_2\|_1^{1/2}\|f\|_2 + \bigg(\int_0^1 |\mathrm{Deg}(W_1)(x) - \mathrm{Deg}(W_2)(x)|^2|f(x)|^2\rmd x\bigg)^\frac{1}{2} \\
			&\leq \sqrt{2}\|W_1 - W_2\|_1^{1/2}\|f\|_2 + \|\mathrm{Deg}(W_1) - \mathrm{Deg}(W_2)\|_\infty \bigg(\int_0^1 |f(x)|^2\rmd x\bigg)^\frac{1}{2} \\ 
			&= \bigg(\sqrt{2}\|W_1 - W_2\|_1^{1/2} + \|\mathrm{Deg}(W_1) - \mathrm{Deg}(W_2)\|_\infty\bigg)\|f\|_2.
		\end{split}
	\end{equation}
This therefore gives the desired result.
\end{proof} 

Lemma~\ref{lem:CloseGraph} leads to the following corollary that states that $\mathcal{L}_d^N$, defined in \eqref{StepLap} from any almost everywhere continuous ring graphon, converges in the $L^2\to L^2$ operator norm to $\mathcal{L}(W)$. As one will see in the following proof, in the case of general graphons $W(x,y)$ we require that $\mathrm{Deg}(W^N_d) \to \mathrm{Deg}(W)$ in the $L^\infty$ norm, which may not always be the case. We do point out that bipartite graphons do in fact satisfy this property since the resulting degree matrices are piecewise constant, leading to the belief that the class of graphons that our results apply to is significantly larger than just the ring networks considered in this work.    

\begin{corollary}\label{cor:CloseGraph}
	Let $W$ be a ring graphon {\color{black} that is almost everywhere continuous} and $W^N_d$ defined from $W$ as in \eqref{StepGraphon}. Then the associated graphon Laplacians $\mathcal{L}(W)$ and $\mathcal{L}_d^N$ satisfy $\|\mathcal{L}(W) - \mathcal{L}_d^N\|_{2\to 2} \to 0$ as $N \to \infty$.  
\end{corollary}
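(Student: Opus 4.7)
My plan is to apply Lemma~\ref{lem:CloseGraph} with $W_1 = W$ and $W_2 = W^N_d$, which reduces the claim to verifying the two limits
\begin{equation*}
\|W - W^N_d\|_1 \to 0 \qquad \text{and} \qquad \|\mathrm{Deg}(W) - \mathrm{Deg}(W^N_d)\|_\infty \to 0
\end{equation*}
as $N \to \infty$ and then combining them through the bound in that lemma.

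For the $L^1$ convergence, I would argue pointwise first: at any continuity point $(x,y) \in [0,1]^2$ of $W$, the step value $W^N_d(x,y) = R(|i-j|/N) = W(i/N, j/N)$ has $|x-i/N|, |y-j/N| < 1/N$, so continuity of $W$ at $(x,y)$ forces $W^N_d(x,y) \to W(x,y)$. Since the discontinuity set of $W$ has Lebesgue measure zero by hypothesis, this gives $W^N_d \to W$ almost everywhere on $[0,1]^2$. Because both $W$ and $W^N_d$ are uniformly bounded by $1$, dominated convergence then delivers $\|W - W^N_d\|_1 \to 0$ immediately.

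For the uniform degree convergence, I would invoke the ring structure. By~\eqref{DegreeFn} the graphon degree is the constant $C := \int_0^1 R(|y|)\rmd y$, while for $x \in I_i^N$,
\begin{equation*}
\mathrm{Deg}(W^N_d)(x) = \frac{1}{N}\sum_{j=1}^N R\left(\frac{|i-j|}{N}\right).
\end{equation*}
The same 1-periodic/circular symmetry of $R$ that makes the graphon degree constant also forces this discrete sum to be independent of $i$: counting multiplicities, the doubly-counted indices $m \in \{1,\dots,\min(i-1,N-i)\}$ are matched exactly with the missing indices $N-m$ via $R(m/N) = R((N-m)/N)$, collapsing the sum to the equispaced Riemann sum $\frac{1}{N}\sum_{m=0}^{N-1} R(m/N)$. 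Since $R$ inherits bounded almost-everywhere continuity from $W$, it is Riemann integrable on $[0,1]$, so this equispaced sum tends to $C$; the limit being common to every $i$ upgrades this to uniform convergence in $x$.

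I expect the main obstacle to be this last step. Almost-everywhere continuity of $R$ by itself only guarantees convergence of a single equispaced Riemann sum, so absent the ring symmetry one would be forced to prove uniform convergence of all the shifted sums $\frac{1}{N}\sum_j R(|i-j|/N)$ in $i$, which is delicate for a merely a.e.\ continuous function. The observation that circular symmetry collapses every shifted sum to the same equispaced sum is what makes the uniform bound essentially automatic; with both ingredients in hand, Lemma~\ref{lem:CloseGraph} closes the argument.
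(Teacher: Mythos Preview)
Your proof is correct and follows the same overall strategy as the paper: apply Lemma~\ref{lem:CloseGraph} and verify the two convergences separately, with the $L^1$ convergence handled identically via pointwise a.e.\ convergence plus dominated convergence.

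The only substantive difference is in the degree step. You compute $\mathrm{Deg}(W^N_d)$ explicitly, use the circular symmetry $R(m/N)=R((N-m)/N)$ to collapse every shifted sum to the single equispaced Riemann sum $\frac{1}{N}\sum_{m=0}^{N-1}R(m/N)$, and then invoke Riemann integrability of $R$ (which you correctly note is inherited from the a.e.\ continuity of $W$). The paper instead exploits that both $\mathrm{Deg}(W)$ and $\mathrm{Deg}(W^N_d)$ are constant in $x$ to write
\[
\|\mathrm{Deg}(W)-\mathrm{Deg}(W^N_d)\|_\infty=\int_0^1\Bigl|\int_0^1[W(x,y)-W^N_d(x,y)]\,\rmd y\Bigr|\,\rmd x\le \|W-W^N_d\|_1,
\]
thereby reducing the degree convergence to the already-established $L^1$ convergence. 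The paper's route is a bit slicker since it avoids the separate appeal to Riemann integrability of $R$ and the explicit collapse of the shifted sums; your route is more concrete and makes the role of the ring symmetry transparent. Both are valid.
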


\begin{proof}
	By construction of $W^N_d$ we have $W^N_d(x,y) \to W(x,y)$ as $N \to \infty$ for every point of continuity $(x,y)$ of $W(x,y)$. By assumption $W$ is continuous almost-everywhere, and so we have pointwise convergence of $W^N_d$ to $W$ almost-everywhere. Since $0 \leq W \leq 1$, it follows from the dominated convergence theorem that $\|W - W^N_d\|_1 \to 0$ as $N\to \infty$. 
Furthermore, the associated degree functions $\mathrm{Deg}(W)(x)$ and $\mathrm{Deg}(W^N_d)(x)$ are independent of $x$, so 
\begin{equation}
	\begin{split}
		\|\mathrm{Deg}(W) - \mathrm{Deg}(W^N_d)\|_\infty &= |\mathrm{Deg}(W)(x) - \mathrm{Deg}(W^N_d)(x)|  \\ 
		&= \bigg|\int_0^1 [W(x,y) - W^N_d(x,y)] \rmd y\bigg| \\
		&= \int_0^1\bigg|\int_0^1 [W(x,y) - W^N_d(x,y)] \rmd y\bigg|\rmd x \\
		&\leq \int_0^1\int_0^1 |W(x,y) - W^N_d(x,y)| \rmd y\rmd x \\ 
		&= \|W - W^N_d\|_1.
	\end{split}
\end{equation}	
Then, using Lemma~\ref{lem:CloseGraph} we have
	\be
		\|\mathcal{L}(W) - \mathcal{L}_d^N\|_{2\to 2} \leq 2\|W - W^N_d\|_1^{1/2} + \|\mathrm{Deg}(W) - \mathrm{Deg}(W^N_d)\|_\infty \leq 2\|W - W^N_d\|_1^{1/2} + \|W - W^N_d\|_1, 
	\ee
	thus giving that $\|\mathcal{L}(W) - \mathcal{L}_d^N\|_{2\to 2} \to 0$ as $N \to \infty$, completing the proof.
\end{proof} 

Corollary~\ref{cor:CloseGraph} is significant because if $\lambda$ is an isolated eigenvalue of $\mathcal{L}(W)$ with finite multiplicity, it follows that for each associated eigenvector $f \in L^2$ there exists a sequence of eigenpairs $(\lambda_N,f_N) \in \R\times L^2$ of $\mathcal{L}_d^N$ for each $N \geq 2$ converging to $(\lambda,f)$. Furthermore, from Lemma~\ref{StepEig}, the sequence $(\lambda_N,f_N)$ corresponds to a sequence of eigenpairs $(N\lambda_N,\v_N) \in \R\times\R^N$ to $\mathrm{L}_d^N$.


\subsection{Random Graphs}\label{subsec:Random}

In the previous subsection we were able to show that the eigenvalues of the deterministic graph Laplacian limit as $N\to \infty$ to the spectrum of the graphon Laplacian, after weighting by a factor of $\frac{1}{N}$. In this section our goal is to demonstrate the convergence of the eigenvalues and eigenvectors of $\gL_r^N$ to the eigenvalues and eigenvectors of $\gL_d^N$ with overwhelming probability. Our main result stating this fact is summarized in Theorem~\ref{thm:Random} below. The main technical hurdle is to show the convergence of the eigenvectors for large $N$ since we do not have pointwise convergence of the elements in the matrices $\gL_d^N$ and $\gL_r^N$ for arbitrary graphons $W$, and so this precludes applying the Davis--Kahan theorem \cite{DK} which is the traditional tool used to demonstrate eigenvector convergence. We will circumvent this using the results of \cite{AltDK} which replaces the Frobenius norm (Euclidean norm of the elements of a matrix) with a matrix operator norm, defined for a matrix $M$ by
\be\label{OperatorNorm}
	\|M\| := \sup_{\|x\|_2 = 1} \|Mx\|_2,
\ee
{\color{black}where $\|\cdot\|_2$ for a finite dimensional vector is simply the Euclidean norm on the finite dimensional space $\mathbb{R}^N$. We refrain from including the subscript $2 \to 2$ for matrix norms since we will not consider any other domains and ranges and we suppress the dependence on on the spatial dimension $N$ to simply notation.}  

We will begin by bounding $\|\gL_r^N - \gL_d^N\|$ with overwhelming probability. The following lemma is an extension of \cite[Theorem~3.1]{Oliveira} to combinatorial graph Laplacians since the aforementioned result only applies to adjacency matrices and normalized Laplacians. Although we suspect that an extension of these results to combinatorial graph Laplacians may be known in the literature, an appropriate reference could not be found and so we provide a detailed proof here. Finally, we note that this result holds for graph Laplacians generated from any graphon $W$, not just ring graphons.

\begin{lemma}\label{lem:RandConv} 
	Let $N \geq 2$ and consider the graph Laplacians $\gL_d^N$ and $\gL_r^N$ as defined in \eqref{detLap} and \eqref{randLap}, respectively. For all $\gamma \in (0,\frac{1}{2})$, there exists a constant $C = C(\gamma)$, independent of $N$, such that 
	\be
		\mathbb{P}(\|\gL_r^N - \gL_d^N\| \geq N^{\frac{1}{2} + \gamma}) \leq 2N\mathrm{e}^{-CN^{2\gamma}}.
	\ee  
\end{lemma}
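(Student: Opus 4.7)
The natural approach is to split the Laplacian difference into its adjacency piece and its degree piece and bound each in operator norm separately, namely
\begin{equation*}
\|\gL_r^N - \gL_d^N\| \;\leq\; \|\gA_r^N - \gA_d^N\| \;+\; \|\mathrm{Deg}(\gA_r^N) - \mathrm{Deg}(\gA_d^N)\|,
\end{equation*}
and then aim to bound each of the two summands on the right by $\tfrac{1}{2}N^{\frac{1}{2}+\gamma}$ with probability at least $1 - N\mathrm{e}^{-CN^{2\gamma}}$. A union bound at the end then produces the claimed estimate, up to adjusting the constant $C$.

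The degree contribution is straightforward. Since $\mathrm{Deg}(\gA_r^N) - \mathrm{Deg}(\gA_d^N)$ is diagonal, its operator norm equals the maximum absolute entry. The $i$-th diagonal entry is the sum $\sum_{j \neq i}\bigl(\xi_{i,j}^N - W(i/N,j/N)\bigr)$ of at most $N$ independent mean-zero random variables that are uniformly bounded in $[-1,1]$. Hoeffding's inequality applied entry-wise gives a sub-Gaussian tail of the form $2\mathrm{e}^{-c t^2/N}$ for the probability of a single diagonal entry exceeding $t$, and a union bound over the $N$ entries, with the choice $t = \tfrac{1}{2}N^{\frac{1}{2}+\gamma}$, produces the required $2N\mathrm{e}^{-CN^{2\gamma}}$ bound on $\|\mathrm{Deg}(\gA_r^N) - \mathrm{Deg}(\gA_d^N)\|$.

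The adjacency contribution is where the work lies, and this is where I expect the main difficulty. Here the entries of $\gA_r^N - \gA_d^N$ are independent (up to symmetry) mean-zero bounded random variables, and we need a concentration estimate for the operator norm of a symmetric random matrix. This is exactly the content of Oliveira's Theorem~3.1, stated for adjacency matrices of inhomogeneous Erd\H{o}s--R\'enyi graphs. That theorem yields, after a routine rescaling, a bound of the form $\mathbb{P}\bigl(\|\gA_r^N - \gA_d^N\| \geq s\bigr) \leq 2N\mathrm{e}^{-c s^2/N}$ for $s$ in an appropriate range, and substituting $s = \tfrac{1}{2}N^{\frac{1}{2}+\gamma}$ gives the desired $2N\mathrm{e}^{-CN^{2\gamma}}$ tail. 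The restriction $\gamma < \tfrac{1}{2}$ is what keeps $s$ in the range where the concentration inequality is valid (so that the exponent does not saturate the trivial bound $\|\gA_r^N - \gA_d^N\| \leq N$).

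Once both pieces are in hand, the triangle inequality splitting above, together with a final union bound over the two bad events, closes the argument. The main obstacle is the careful invocation of the matrix concentration estimate from Oliveira: since the result in \cite{Oliveira} is phrased for the adjacency matrix and normalized Laplacian rather than the combinatorial Laplacian, the decomposition above is essential, and one must also be somewhat careful that the entry-wise variances are bounded by $1$ so the constants in the concentration bound are genuinely independent of $N$. Aside from that, the estimate for the diagonal degree part via Hoeffding and a union bound is standard.
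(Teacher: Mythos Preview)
Your proposal is correct and will yield the stated bound, but it follows a different route from the paper's own argument.

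The paper does \emph{not} split off the degree part. Instead it writes the full Laplacian difference directly as a single sum of independent mean-zero Hermitian matrices, one per edge:
\[
\gL_r^N - \gL_d^N \;=\; \sum_{1\leq i\leq j\leq N} (\xi_{i,j}^N - p_{i,j})\,(B_{i,j} - B_{i,i} - B_{j,j}),
\]
where $B_{i,j} = \ve_i\ve_j^T + \ve_j\ve_i^T$ for $i\neq j$ and $B_{i,i} = \ve_i\ve_i^T$. Each summand has operator norm at most $2$, and a short algebraic computation shows that $\sum \mathbb{E}[X_{i,j}^2]$ is itself (minus) a weighted graph Laplacian with edge weights $2p_{i,j}(1-p_{i,j})$, so Gershgorin bounds its largest eigenvalue by $4(N-1)$. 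The matrix Bernstein inequality (Oliveira's Corollary~7.1, equivalently Tropp's Theorem~1.6) then gives the tail bound in one shot, and the restriction $\gamma<\tfrac12$ keeps the exponent in the sub-Gaussian regime.

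Your decomposition into adjacency plus diagonal degree is perfectly legitimate and arguably more modular: you get to quote Oliveira's adjacency result essentially off the shelf, and the degree piece is just scalar Hoeffding plus a union bound. One small caution: Oliveira's Theorem~3.1 as usually stated is a high-probability deviation estimate under a minimum-degree hypothesis, not a tail bound valid for all $s$; the form $\mathbb{P}(\|\gA_r^N-\gA_d^N\|\geq s)\leq 2N e^{-cs^2/N}$ you need is really the intermediate matrix-Bernstein step (Corollary~7.1) applied to the adjacency decomposition, so you should cite that rather than Theorem~3.1 itself. The paper's one-step approach avoids this distinction and the extra union bound, at the cost of the small computation identifying $\sum\mathbb{E}[X_{i,j}^2]$; your approach trades that computation for two separate invocations and a triangle inequality. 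Both land in the same place with the same $N$-independent constant.
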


{\color{black}The main tool for proving Lemma~\ref{lem:RandConv} will be Corollary~7.1 from \cite{Oliveira}, which we state here as a lemma for the reader. This result can also be seen as a specific application of \cite[Theorem~1.6]{tropp2012user} to square Hermitian matrices.}

\begin{lemma}[\cite{Oliveira}, Corollary~7.1]\label{lem:OlivCor}
	Let $X_1,\dots X_n \in \mathbb{C}^{d\times d}$ be mean-zero independent random Hermitian matrices and suppose that there exists a $M > 0$ with $\|X_i\| \leq M$ almost surely for all $1 \leq i \leq n$. Define:
	\be
		\sigma^2 := \lambda_\mathrm{\max}\bigg(\sum_{i = 1}^n \mathbb{E}[X_i^2]\bigg).
	\ee 
	Then for all $t \geq 0$, 
	\be 
		\mathbb{P}\bigg(\bigg\|\sum_{i = 1}^n X_i\bigg\| \geq t\bigg) \leq 2d\mathrm{e}^{-\frac{t^2}{8\sigma^2 + 4Mt}}.
	\ee
\end{lemma}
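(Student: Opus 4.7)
\textbf{Proposal for proof of Lemma~\ref{lem:RandConv}.}

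The plan is to write $\gL_r^N - \gL_d^N$ as a sum of independent, mean-zero, bounded, Hermitian random matrices indexed by the unordered pairs $(i,j)$ with $i<j$, and then apply Lemma~\ref{lem:OlivCor} directly. Since both Laplacians have the form $\gA - \mathrm{Deg}(\gA)$ and the map $\gA \mapsto \gA - \mathrm{Deg}(\gA)$ is linear in the entries $A_{ij}$, for each pair $i<j$ I introduce the scalar $\eta_{ij} := \xi_{i,j}^N - W(i/N, j/N)$ and the fixed symmetric matrix $Y_{ij} := E_{ij} + E_{ji} - E_{ii} - E_{jj}$, where $E_{ab}$ is the standard matrix unit in $\R^{N\times N}$. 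Setting $X_{ij} := \eta_{ij} Y_{ij}$, a direct entry-by-entry accounting of the contribution of edge $(i,j)$ to $\gA - \mathrm{Deg}(\gA)$ gives the decomposition
\begin{equation*}
	\gL_r^N - \gL_d^N = \sum_{1 \leq i < j \leq N} X_{ij}.
\end{equation*}

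Next I would verify the hypotheses of Lemma~\ref{lem:OlivCor}. Independence of the $X_{ij}$ follows from the independence of the $\xi_{i,j}^N$ stipulated in \eqref{eq:random}; each $X_{ij}$ is real symmetric (hence Hermitian) and mean-zero since $\mathbb{E}[\eta_{ij}] = 0$. For the uniform bound, note that $Y_{ij}$ acts nontrivially only on the $2$-dimensional coordinate plane spanned by $e_i, e_j$, and on that plane it has matrix $\bigl(\begin{smallmatrix} -1 & 1 \\ 1 & -1\end{smallmatrix}\bigr)$ with operator norm $2$. Since $|\eta_{ij}| \leq 1$ almost surely, I obtain $\|X_{ij}\| \leq 2$, so I may take $M = 2$.

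The main quantitative step is bounding the matrix variance parameter $\sigma^2$. A short calculation on the nontrivial $2\times 2$ block gives $Y_{ij}^2 = 2(E_{ii} + E_{jj} - E_{ij} - E_{ji})$, so
\begin{equation*}
	\sum_{i<j}\mathbb{E}[X_{ij}^2] = 2\sum_{i<j} \mathbb{E}[\eta_{ij}^2](E_{ii} + E_{jj} - E_{ij} - E_{ji}),
\end{equation*}
which is exactly $2L'$, where $L'$ is a weighted (combinatorial) graph Laplacian with edge weights $v_{ij} := \mathbb{E}[\eta_{ij}^2] = p_{ij}(1-p_{ij}) \leq 1/4$. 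Applying Gershgorin's theorem to the positive semidefinite matrix $L'$ yields $\lambda_{\max}(L') \leq 2\max_k \sum_{j\neq k} v_{kj} \leq (N-1)/2$, hence $\sigma^2 \leq N - 1 \leq N$.

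Finally, I would plug $t = N^{1/2+\gamma}$, $M = 2$, and $\sigma^2 \leq N$ into Lemma~\ref{lem:OlivCor} to get
\begin{equation*}
	\mathbb{P}\bigl(\|\gL_r^N - \gL_d^N\| \geq N^{1/2+\gamma}\bigr) \leq 2N\exp\!\left(-\frac{N^{1+2\gamma}}{8N + 8 N^{1/2+\gamma}}\right),
\end{equation*}
and since $\gamma < 1/2$ implies $N^{1/2+\gamma} \leq N$, the denominator is at most $16N$, so the exponent is at most $-N^{2\gamma}/16$. Choosing $C = 1/16$ (which is independent of $N$ but may, as allowed, be refined in a $\gamma$-dependent way if one tracks the constants more carefully) finishes the proof. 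The main obstacle I expect is purely bookkeeping: writing the decomposition cleanly so that the variance matrix is manifestly a weighted Laplacian, whose largest eigenvalue is then controlled without having to invoke anything more delicate than Gershgorin.
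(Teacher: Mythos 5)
Note that the statement you were given (Lemma~\ref{lem:OlivCor}) is a matrix-concentration inequality that the paper \emph{quotes} from Oliveira without proof; what you have actually written, and correctly so, is a proof of Lemma~\ref{lem:RandConv}, the statement to which that inequality is applied.

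Your proof of Lemma~\ref{lem:RandConv} is correct and takes essentially the same route as the paper: the identical rank-one decomposition of $\gL_r^N-\gL_d^N$ into independent, mean-zero, bounded, symmetric summands (your $Y_{ij}=E_{ij}+E_{ji}-E_{ii}-E_{jj}$ is the paper's $B_{i,j}-B_{i,i}-B_{j,j}$), the same recognition that $\sum\mathbb{E}[X_{ij}^2]$ is a weighted combinatorial Laplacian, the same Gershgorin estimate for $\sigma^2$, and the same plug-in of $t=N^{1/2+\gamma}$ into the Oliveira bound. The only genuine difference is quantitative: you use the sharp inequality $p(1-p)\le \tfrac14$ where the paper settles for $p(1-p)\le 1$, giving you $\sigma^2\le N-1$ instead of the paper's $4(N-1)$ and hence a better explicit constant $C$. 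You also sum over $i<j$ rather than $i\le j$, which is cleaner since the diagonal terms vanish; this does not affect the argument. In short: same proof, slightly sharper bookkeeping.
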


\begin{proof}[Proof of Lemma~\ref{lem:RandConv}]
	For a given graphon $W$ and a fixed $N \geq 2$, let us write $p_{i,j} = W(\frac{i}{N},\frac{j}{N})$ for $i \neq j$ and $p_{i,i} = 0$ if $i = j$. Hence, the adjacency matrices can be written 
	\be
		\gA_d^N = [p_{i,j}]_{1 \leq i,j\leq N}
	\ee
	and 
	\be 
		\gA_r^N = [\xi_{i,j}^N]_{1 \leq i,j\leq N}, \quad \mathrm{with}\quad \begin{cases} \mathbb{P}(\xi_{i,j}^N = 1) = 1 - \mathbb{P}(\xi_{i,j}^N = 0) = p_{i,j} & i < j \\
		 \xi_{j,i}^N = \xi_{i,j}^N & i<j \\
			\xi_{i,i}^N = 0 & i = j
		\end{cases}.
	\ee
	Importantly, $\gA_d^N = \mathbb{E}[\gA_r^N]$ and similarly $\gL_d^N = \mathbb{E}[\gL_r^N]$.
	
	Let $\{\ve_i\}_{i = 1}^N$ be the canonical basis for $\R^N$. For each $1 \leq i \leq j \leq N$ define the matrices $B_{i,j} \in \R^{N\times N}$ by
	\be
		B_{i,j} = \begin{cases}
			\ve_i\ve_j^T + \ve_j\ve_i^T, & i \neq j\\
			\ve_i\ve_i^T, & i = j 
		\end{cases}.
	\ee
	Note that $B_{i,j}$ are real-valued, symmetric matrices and that the adjacency matrices can be written 
	\be
		\begin{split}
			\gA_d^N &= \sum_{1 \leq i \leq j \leq N} p_{i,j}B_{i,j}, \\
			\gA_r^N &= \sum_{1 \leq i \leq j \leq N} \xi_{i,j}^NB_{i,j}.
		\end{split}
	\ee
	Consequentially, the associated graph Laplacians can be written 
	\be
		\begin{split}
			\gL_d^N &= \sum_{1 \leq i \leq j \leq N} p_{i,j}(B_{i,j} - B_{i,i} - B_{j,j}), \\
			\gL_r^N &= \sum_{1 \leq i \leq j \leq N} \xi_{i,j}^N(B_{i,j} - B_{i,i} - B_{j,j}).
		\end{split}
	\ee
	Defining 
	\be
		X_{i,j} = (\xi_{i,j}^N - p_{i,j})(B_{i,j} - B_{i,i} - B_{j,j})
	\ee
	gives that 
	\be\label{LapDiff}
		\gL_r^N - \gL_d^N = \sum_{1 \leq i \leq j \leq N} (\xi_{i,j}^N-p_{i,j})(B_{i,j} - B_{i,i} - B_{j,j}) = \sum_{1 \leq i \leq j \leq N} X_{i,j}.
	\ee
	The matrices $X_{i,j} \in \R^{N\times N}$ are independent, symmetric, and have mean zero. Furthermore, since each row of $X_{i,j}$ has at most two nonzero entries that add to zero, it follows that 
{\color{black}	\be\label{XopBnd}
		\|X_{i,j}\| \leq 2|\xi_{i,j}^N - p_{i,j}| \leq 2
	\ee
	for every $1 \leq i \leq j \leq N$.}
	
	Now, 
	\be\label{X2expand}
		\begin{split}
		X_{i,j}^2 &= (\xi_{i,j}^N - p_{i,j})^2(B_{i,j} - B_{i,i} - B_{j,j})^2 \\
		&= (\xi_{i,j}^N - p_{i,j})^2(B_{i,j}^2 + B_{i,i}^2 + B_{j,j}^2 - B_{i,j}B_{i,i} - B_{i,i}B_{i,j} - B_{i,j}B_{j,j} - B_{j,j}B_{i,j} + B_{i,i}B_{j,j} + B_{j,j}B_{i,i}).
		\end{split}
	\ee
	From the definition of the $B_{i,j}$, we have the following properties:
	\begin{enumerate}
		\item $B_{i,i}B_{j,j} = 0$ for all $i \neq j$.
		\item $B_{i,i}^2 = B_{i,i}$ for all $1 \leq i \leq N$.
		\item $B_{i,j}^2 = B_{i,i} + B_{j,j}$ for all $1 \leq i \leq j \leq N$.
		\item $B_{i,j}B_{i,i} + B_{i,i}B_{i,j} = B_{i,j}$ and $B_{i,j}B_{j,j} + B_{j,j}B_{i,j} = B_{i,j}$ for all $1 \leq i \leq j \leq N$.  
	\end{enumerate}
	These properties can in turn be used to simplify \eqref{X2expand}, giving
	\be
		\begin{split}
		X_{i,j}^2 &= (\xi_{i,j}^N - p_{i,j})^2(2B_{i,i} + 2B_{j,j} - 2B_{i,j}) \\
		&= 2(\xi_{i,j}^N - p_{i,j})^2(B_{i,i} + B_{j,j} - B_{i,j}).
		\end{split}
	\ee
	Since the $\xi_{i,j}^N$ are binary random variables, we have the expectation
	\be
		\begin{split}
		\mathbb{E}[X_{i,j}^2]	 &= \underbrace{2p_{i,j}(1 - p_{i,j})^2(B_{i,i} + B_{j,j} - B_{i,j})}_{\xi_{i,j}^N = 1} + \underbrace{2(1 - p_{i,j})p_{i,j}^2(B_{i,i} + B_{j,j} - B_{i,j})}_{\xi_{i,j}^N = 0} \\
		&= 2p_{i,j}(1 - p_{i,j})(B_{i,i} + B_{j,j} - B_{i,j}).
		\end{split}
	\ee 
	Taking the sum over all $1 \leq i \leq j \leq N$ gives
	\be
		\sum_{1 \leq i \leq j \leq N} \mathbb{E}[X_{i,j}^2] = \sum_{1 \leq i \leq j \leq N} 2p_{i,j}(1 - p_{i,j})(B_{i,i} + B_{j,j} - B_{i,j}),
	\ee
	which is the negative of a graph Laplacian with edge weights between vertices with index  $i$ and $j$  given by $2p_{i,j}(1 - p_{i,j})$. Therefore, from the Gershgorin circle theorem we have 
	\be
		\lambda_\mathrm{max}\bigg(\sum_{1 \leq i \leq j \leq N} \mathbb{E}[X_{i,j}^2]\bigg) \leq 2\max_{1 \leq i \leq N} \sum_{j \neq i} 2p_{i,j}(1 - p_{i,j}) \leq 4(N - 1), 
	\ee 
	since $0 \leq p_{i,j}(1 - p_{i,j}) \leq 1$ for all $i\leq j$.
	
	{\color{black} Recall from \eqref{LapDiff} that $\gL_r^N - \gL_d^N = \sum_{1 \leq i \leq j \leq N} X_{i,j}$. We may now apply Lemma~\ref{lem:OlivCor} with $d = N$, $n = N(N+1)/2$, $M = 2$, and $\sigma^2 = 4(N-1)$ to find that for all $t > 0$ we have}
	\be
		\mathbb{P}\bigg(\bigg\|\sum_{1 \leq i \leq j \leq N} X_{i,j}\bigg\| \geq t\bigg) \leq 2N\mathrm{e}^{-\frac{t^2}{32N + 8t - 32}}.
	\ee
	For any $\gamma > 0$, take $t = N^{\frac{1}{2} + \gamma}$. Therefore, we have
	\be
		\mathbb{P}\bigg(\|\gL_r^N - \gL_d^N\| \geq N^{\frac{1}{2} + \gamma}\bigg) \leq 2N\mathrm{e}^{-\frac{N^{1 + 2\gamma}}{32N + 8N^{1/2 + \gamma} - 32}} = 2N\mathrm{e}^{-\frac{N^{2\gamma}}{32 + 8N^{\gamma - 1/2} - 32N^{-1}}}.	
	\ee
{\color{black} Finally, consider any $\gamma\in(0,\frac{1}{2})$.  Notice that $N \geq 1$ (used for simplicity over $N \geq 2$) implies 
\begin{equation}
	0\leq 32 + 8N^{\gamma - 1/2} - 32N^{-1} \leq 32 + 8N^{\gamma - 1/2} \leq 40, 
\end{equation}
from the fact that $N^{\gamma - 1/2} \leq 1$ because $0 < \gamma < 1/2$, and $32 - 32N^{-1} \geq 0$ for all $N\geq 1$. Thus, dividing through by $N^{2\gamma} \geq 1$ gives 
\begin{equation}
	0 \leq \frac{32 + 8N^{\gamma - 1/2} - 32N^{-1}}{N^{2\gamma}} \leq \frac{40}{N^{2\gamma}}.
\end{equation}
Inverting the above fractions and multiplying through by $-1$ preserves the inequality. That is, 
\begin{equation}
	-\frac{N^{2\gamma}}{32 + 8N^{\gamma - 1/2} - 32N^{-1}} \leq -\frac{N^{2\gamma}}{40}.
\end{equation}
Since the exponential function preserves these inequalities, we arrive at the desired result. }
Hence, for all $\gamma\in(0,\frac{1}{2})$ we can bound the exponential bound above by $\mathrm{e}^{-C(\gamma)N^{2\gamma}}$ for some $C(\gamma) > 0$ and {\color{black} any} $N \geq 2$, thus giving the desired bound and completing the proof. 
\end{proof}

Lemma~\ref{lem:RandConv} shows that as the number of vertices $N$ increases, the operator norm of the difference between the deterministic graph Laplacian and the random graph Laplacian is no greater than $N^{\frac{1}{2} + \gamma}$ with  overwhelming probability, for all sufficiently small $\gamma > 0$. {\color{black} As stated at the outset of this subsection, our goal is to understand the behaviour of the eigenvalues and eigenvectors of $\gL_r^N$ as they relate to the eigenvalues of $\gL_d^N$ for large $N$, which we will show can be approximated by isolated eigenvalues of the ring graphon Laplacian $\mathcal{L}(W)$. Hence, it is important to understand the spectrum of a ring graphon Laplacian. To this end, suppose $W$ is a ring graphon in that $W(x,y) = R(|x - y|)$ for some 1-periodic function $R$. Since $W \in L^2$ the Riesz--Fischer theorem guarantees that it can be expressed as the convergent Fourier series
\be\label{GraphonFourier}
	W(x,y)=\sum_{k\in \mathbb{Z}} c_k e^{2\pi\mbi k(x-y) }, 
\ee
for some square-summable $c_k\in\mathbb{R}$. Using the above Fourier series representation for $W$ and the fact that the $e^{2\pi\mbi kx }$ are orthogonal in $L^2$ for different $k \in \mathbb{Z}$, one finds that the eigenvalues of the corresponding graphon Laplacian $\mathcal{L}(W)$ are given explicitly by $\lambda_k = c_k - c_0$ with corresponding eigenfunctions $e^{2\pi\mbi kx}$. Since we have assumed that $W$ is real-valued and symmetric it follows that $c_k = c_{-k}$, meaning that $\lambda_k = \lambda_{-k}$ for all $k \in \mathbb{Z}$. Hence, nonzero eigenvalues come in pairs with (at least) two-dimensional eigenspaces spanned by $e^{2\pi k \mbi x}$ and its complex conjugate. Furthermore, since the $c_k$ are square summable, we have that $c_k \to 0$ as $k \to \pm\infty$, thus giving an accumulation of eigenvalues at $-c_0$ because $\lambda_k \to -c_0$ as $k \to \pm\infty$.
}

{\color{black} Having now characterized the spectrum of a ring graphon Laplacian, we will now present the main result of this section. In the proof we will evoke Weyl's inequality, which we now recall. Let $M_1$ and $M_2$ be $d\times d$ Hermitian matrices with eigenvalues $\mu_1 \geq \dots \geq \mu_d$ and $\nu_1 \geq \dots \geq \nu_d$, respectively. Furthermore, suppose that $M_1 - M_2$ has eigenvalues $\rho_1 \geq \dots \geq \rho_d$. Then, Weyl's inequality states that 
\begin{equation}
	\nu_j + \rho_d \leq \mu_j \leq \nu_j + \rho_1, \quad j = 1,\dots,d.
\end{equation}
We use the fact that for all $j = 1,\dots,d$ we have $|\rho_j| \leq \|M_1 - M_2\|$, the operator norm \eqref{OperatorNorm} of $M_1 - M_2$, thus giving the bound between successive eigenvalues of the matrices $M_1$ and $M_2$:
\begin{equation}\label{WeylThm}
	|\mu_j - \nu_j| \leq \max\{|\rho_1|,|\rho_d|\} \leq \|M_1 - M_2\|. 
\end{equation}
We now state our main result of this section.}

\begin{theorem}\label{thm:Random} 
	{\color{black} Let $W$ be an almost everywhere continuous ring graphon and assume $\mu \in \R$ is an isolated eigenvalue of $\mathcal{L}(W)$ with multiplicity $1 \leq m < \infty$. Then, there exists $\delta_0(\mu) > 0$ such that for all $\delta \in (0,\delta_0)$ and $\gamma \in (0,\frac{1}{2})$ there exists $N_0 = N_0(\delta,\gamma) \geq 1$ and $C = C(\gamma) > 0$ such that the following is true {\color{black} for each $N \geq N_0$}:}
	\begin{enumerate}
		\item There are exactly $m$ eigenvalues of $\gL_d^N$, $\lambda_1 \geq \lambda_2 \geq \dots \geq \lambda_m$, such that
			\be
				\bigg|\frac{\lambda_k}{N} - \mu\bigg|<\delta,
			\ee   
		for all $k = 1,\dots,m$.
		\item There are exactly $m$ eigenvalues of $\gL_r^N$, $\hat \lambda_1 \geq \hat\lambda_2 \geq \dots \geq \hat\lambda_m$, such that 
			\be
				\bigg|\frac{\hat\lambda_k}{N} - \mu\bigg|<\delta,
			\ee 
		for all $k = 1,\dots,m$, with probability at least $1 - 2N\mathrm{e}^{-CN^{2\gamma}}$.
		\item If $V = [\v_1,\v_2,\dots,\v_m]\in\R^{N\times m}$ and $\hat V = [\hat \v_1,\hat \v_2,\dots,\hat \v_m]\in\R^{N\times m}$ have orthonormal columns satisfying $\gL_d^N \v_k = \lambda_k \v_k$ and $\gL_r^N \hat \v_k = \hat\lambda_k \hat \v_k$ for $k = 1,\dots,m$, then there exists an orthonormal matrix $\hat O \in \R^{m\times m}$ such that 
	\be
		\|\hat V\hat O - V\|_F \leq \frac{\sqrt{8m}}{(\delta_0 - \delta)N^{1/2 - \gamma}}	
	\ee
	where $\|\cdot\|_F$ is the Frobenius matrix norm, with probability at least $1 - 2N\mathrm{e}^{-CN^{2\gamma}}$. 
	\end{enumerate}
\end{theorem}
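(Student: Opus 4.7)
My plan follows the three-part structure of the theorem and combines the deterministic spectral convergence of Corollary~\ref{cor:CloseGraph}, the random concentration estimate of Lemma~\ref{lem:RandConv}, and the Davis--Kahan variant of \cite{AltDK}. I take $\delta_0 := \mathrm{dist}(\mu, \sigma(\mathcal{L}(W)) \setminus \{\mu\}) > 0$, which is positive by the isolation hypothesis. Because the ring graphon has a constant degree function, $\mathcal{L}(W)$ is the sum of a compact convolution operator and a scalar multiple of the identity; its spectrum is discrete away from the single accumulation point $-c_0$ identified by the Fourier expansion \eqref{GraphonFourier}, and the Riesz spectral projector onto the $\mu$-eigenspace has rank $m$. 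Given $\|\mathcal{L}(W) - \mathcal{L}_d^N\|_{2\to 2} \to 0$ from Corollary~\ref{cor:CloseGraph}, standard self-adjoint perturbation theory then yields an $N_0$ beyond which $\mathcal{L}_d^N$ has exactly $m$ eigenvalues within $\delta/2$ of $\mu$ and no eigenvalues in the annulus $\delta/2 < |\lambda - \mu| < \delta_0 - \delta/2$. Rescaling via Lemma~\ref{lem:StepEigs} produces the eigenvalues $\lambda_1 \geq \cdots \geq \lambda_m$ of $\gL_d^N$ claimed in part (i).

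For part (ii) I would apply Weyl's inequality \eqref{WeylThm} to the Hermitian matrices $\gL_r^N$ and $\gL_d^N$: each eigenvalue of $\gL_r^N$ lies within $\|\gL_r^N - \gL_d^N\|$ of the correspondingly-ordered eigenvalue of $\gL_d^N$. Lemma~\ref{lem:RandConv} bounds this perturbation by $N^{1/2+\gamma}$ with probability at least $1 - 2N\mathrm{e}^{-CN^{2\gamma}}$, so after dividing by $N$ all eigenvalues shift by at most $N^{-1/2+\gamma}$. Enlarging $N_0$ so that $N^{-1/2+\gamma} < \delta/2$ keeps the shifted cluster inside $|\hat\lambda_k/N - \mu| < \delta$ and prevents any exterior eigenvalues from entering, since they remain at distance at least $\delta_0 - \delta/2 - N^{-1/2+\gamma} > \delta$ from $\mu$. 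This yields the $m$ eigenvalues of $\gL_r^N$ claimed on the high-probability event.

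For part (iii) I invoke the Davis--Kahan variant of \cite{AltDK}. Together, the arguments for parts (i) and (ii), with the choice $\delta/2$ made above, imply that on the same high-probability event the cluster $\lambda_1, \dots, \lambda_m$ is separated from the rest of $\sigma(\gL_d^N)$ by at least $(\delta_0 - \delta) N$. With perturbation size $\|\gL_r^N - \gL_d^N\| \leq N^{1/2+\gamma}$, the variant directly produces an orthogonal matrix $\hat O \in \R^{m \times m}$ with
\be
	\|\hat V \hat O - V\|_F \leq \frac{2^{3/2}\sqrt{m}\,\|\gL_r^N - \gL_d^N\|}{(\delta_0 - \delta) N} \leq \frac{\sqrt{8m}}{(\delta_0 - \delta) N^{1/2 - \gamma}},
\ee
which is the claimed bound. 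The main obstacle is part (i): one must leverage the operator-norm convergence of Corollary~\ref{cor:CloseGraph} to get exact eigenvalue counting plus uniform gap preservation, which requires verifying that no eigenvalues of $\mathcal{L}_d^N$ accumulate near $\mu$ from outside the disk. This is handled by the observation that the only spectral accumulation point of $\mathcal{L}(W)$ is $-c_0$, which is bounded away from the isolated $\mu$, so the standard Riesz-projector argument for norm-resolvent perturbations of self-adjoint operators applies without modification. Once this is settled, the remaining parts reduce to concentration bookkeeping and a direct application of \cite{AltDK}.
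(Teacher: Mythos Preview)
Your proposal is correct and follows essentially the same route as the paper: Corollary~\ref{cor:CloseGraph} together with Lemma~\ref{lem:StepEigs} for part (i), Weyl's inequality combined with Lemma~\ref{lem:RandConv} for part (ii), and the Davis--Kahan variant of \cite{AltDK} applied with the $(\delta_0-\delta)N$ spectral gap for part (iii). One small bookkeeping caveat: your inequality $\delta_0 - \delta/2 - N^{-1/2+\gamma} > \delta$ in part (ii) only holds when $\delta < \delta_0/2$; to cover the full range $\delta \in (0,\delta_0)$ you should instead enlarge $N_0$ so that $N^{-1/2+\gamma} < \min\{\delta/2,\ (\delta_0 - \delta)/2\}$, which keeps the exterior eigenvalues at distance greater than $\delta$ for every admissible $\delta$ (the paper's own proof is similarly informal at this step).
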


\begin{proof}
	{\color{black} Since $\mu$ is an isolated eigenvalue of $\mathcal{L}(W)$, we can identify a $\delta_0 > 0$ sufficiently small so that the open ball in the complex plane centered at $\mu \in\R$ intersects the spectrum of $\mathcal{L}(W)$ only at $\mu$. Then, from our work in \S\ref{subsec:Deterministic}, it follows that for any $\delta \in (0,\delta_0)$ there exists $N_1 \geq 1$, which depends on $\delta$, so that for all $N \geq N_1$ we have that there are exactly $m$ eigenvalues of $\gL_d^N$, written $\lambda_1 \geq \lambda_2 \geq \dots \geq \lambda_m$, such that 
	\begin{equation}
		\bigg|\frac{\lambda_k}{N} - \mu\bigg|< \frac \delta 2, \qquad k = 1,\dots, m,
	\end{equation}
	for all $N \geq N_1$. This holds since $\|\mathcal{L}(W) - \mathcal{L}_d^N\|_{2 \to 2} \to 0$ as $N \to \infty$ which gives the convergence of isolated eigenvalues (with multiplicity) between the two operators, and from Lemma~\ref{lem:StepEigs} we have that the eigenvalues $\lambda$ of $\gL_d^N$ are exactly eigenvalues $\lambda/N$ for $\mathcal{L}_d^N$. 

	We now turn to the spectrum of $\gL_r^N$. From Lemma~\ref{lem:RandConv}, for {\color{black} each fixed} $N \geq 2$ and $\gamma \in (0,\frac 1 2)$, there exists a $C = C(\gamma) > 0$ such that $\|\gL_r^N - \gL_d^N\| \leq N^{\frac{1}{2} + \gamma}$ with probability at least $1 - 2N\mathrm{e}^{-CN^{2\gamma}}$. Since $\gL_r^N$ and $\gL_d^N$ are both symmetric matrices for any $N$, it follows {\color{black} from Weyl's inequality via \eqref{WeylThm} that} with probability at least $1 - 2N\mathrm{e}^{-CN^{2\gamma}}$ there exists $m$ eigenvalues of $\gL_r^N$, denoted $\hat \lambda_1 \geq \hat\lambda_2 \geq \dots \geq \hat\lambda_m$, such that 
	\be
		|\lambda_k - \hat\lambda_k| \leq \|\gL_r^N - \gL_d^N\| \leq N^{\frac{1}{2} + \gamma}, 
	\ee
	for each $k = 1,\dots,m$. Now, let $N_2 \geq 1$ be taken sufficiently large so that $N_2^{\gamma - \frac 1 2} \leq \frac \delta 2$, which exists since $\gamma \in (0,\frac 1 2)$. Therefore, {\color{black} for each fixed} $N \geq N_0 := \max\{N_1,N_2\}$ we have 
	\be
		\begin{split}
			\bigg|\frac{\hat\lambda_k}{N} - \mu\bigg| &\leq \bigg|\frac{\lambda_k}{N} - \mu\bigg| + \bigg|\frac{\hat\lambda_k}{N} - \frac{\lambda_k}{N}\bigg|  \\
			&< \frac{\delta}{2} + \frac{N^{\frac 1 2 + \gamma}}{N} \\
			& \leq \frac{\delta}{2} + \frac{\delta}{2} \\
			&= \delta,	
		\end{split}
	\ee
	for each $k = 1,\dots,m$ with probability at least $1 - 2N\mathrm{e}^{-CN^{2\gamma}}$. The fact that exactly $m$ eigenvalues of $\gL_r^N$ satisfy the above comes from the fact that all other eigenvalues of $\gL_d^N$ for $N$ large are bounded away from $\mu$ and Weyl's inequality again gives that the remaining eigenvalues of $\gL_r^N$ are close to these eigenvalues after normalizing by $N$. This proves statement (2) of the theorem.
	
	We now prove statement (3) of the theorem. By definition of $\delta_0$, it follows that for each $N \geq N_0$, if $\tilde{\lambda} \notin \{\lambda_1,\lambda_2,\dots,\lambda_m\}$ is an eigenvalue of $\gL_d^N$, then 
	\be
		\bigg|\frac{\tilde\lambda}{N} - \mu\bigg| > \delta_0. 	
	\ee 
	Thus, using the reverse triangle inequality, we get 
	\be
		\bigg|\frac{\tilde\lambda}{N} - \frac{\lambda_j}{N}\bigg| \geq \bigg|\frac{\tilde\lambda}{N} - \mu\bigg| - \bigg|\frac{\lambda_j}{N} - \mu\bigg| > \delta_0 - \delta 	
	\ee
	for all $j = 1,\dots,m$. Multiplying through by $N$ implies that 
	\be
		|\tilde\lambda - \lambda_j| > (\delta_0 - \delta)N	
	\ee
	for all $j = 1,\dots, m$ and $\tilde{\lambda} \notin \{\lambda_1,\lambda_2,\dots,\lambda_m\}$ an eigenvalue of $\gL_d^N$. That is, the gap between the $m$ eigenvalues $\lambda_j$ of $\gL_d^N$ and its other eigenvalues grows at a rate of $\mathcal{O}(N)$ as $N \to \infty$.
	
	Now, the variant of the Davis--Kahan theorem presented in \cite[Theorem~2]{AltDK} gives that there exists an orthogonal matrix $\hat O \in \R^{m\times m}$ such that 
	\be
		\|\hat V\hat O - V\|_F \leq \frac{\sqrt{8m}\|\gL_r^N - \gL_d^N\|}{\min\{|\tilde\lambda - \lambda_j|\}},	
	\ee
	where $\hat V$ and $V$ are as in the statement of the lemma and the minimum in the denominator is taken over all $j = 1,\dots,m$ and $\tilde{\lambda} \notin \{\lambda_1,\lambda_2,\dots,\lambda_m\}$ an eigenvalue of $\gL_d^N$. Using Lemma~\ref{lem:RandConv} again, we have that for each $\gamma \in (0,\frac{1}{2})$, $\|\gL_r^N - \gL_d^N\| \leq N^{1/2 + \gamma}$ with probability at least $1 - 2N\mathrm{e}^{-CN^{2\gamma}}$. Hence, the results of statement (3) in the theorem now follows from the inequality
	\be
		\|\hat V\hat O - V\|_F \leq \frac{\sqrt{8m}N^{1/2 + \gamma}}{(\delta_0 - \delta)N} \leq \frac{\sqrt{8m}}{(\delta_0 - \delta)N^{1/2 - \gamma}},
	\ee  
	which holds with probability at least $1 - 2N\mathrm{e}^{-CN^{2\gamma}}$. This completes the proof of the theorem. }
\end{proof}


\section{Turing Bifurcations for Ring Graphons}\label{sec:TuringonW}

We now employ the machinery introduced in the previous section to understand Turing bifurcations for the class of ring graphon Laplacians wherein the weight of an edge in the network is a function of the distance between the two nodes. In this section, we consider the non-local graphon version of the Swift-Hohenberg equation, given by
\be \label{eq:SHgraphon}
	\frac{du}{dt} = -(\L-\kappa)^2 u +\epsilon u +ru^2-bu^3  
\ee
for some constants $r\in\mathbb{R}$, $b>0$ and  $\kappa<0$.  Recall that the parameter $r$ controls the magnitude of the odd-symmetry breaking quadratic term while $\kappa$ and $\e$ control which mode(s) $e^{2\pi k \mbi x}$ are unstable. The linearization of the right-hand-side of \eqref{eq:SHgraphon} about the zero state, $u = 0$, with $\e=0$ is $\mathrm{S}_W=-(\L-\kappa)^2$ which is a operator on  $L^2$  with eigenvalues 
\be \label{graphLapEig}
	\ell_k=-\lambda_k^2+2\kappa\lambda_k-\kappa^2.
\ee
{\color{black} If we fix $\kappa = \lambda_k$} for some $k$ then the zero state in \eqref{eq:SHgraphon} will become unstable for any $\e>0$ and we expect that a non-zero stationary solution will bifurcate.  Our goal in this section is to employ center manifold theory to characterize the bifurcation that occurs as $\e$ passes through zero.  

{\color{black} The basic idea of the following analysis is to show that when the homogeneous state loses stability then generic initial conditions starting near the zero state will grow and converge to some other state.  In fact, the loss of stability is associated with the creation of nearby stationary states and by performing a center manifold reduction these bifurcating states can be identified.  Several scenarios are possible and classifying them provides important information regarding the state of the system near the bifurcation.  In a {\em pitchfork} bifurcation, locally, two symmetrically-related steady-states are created.  These states can exist when $\e>0$ ({\em super-critical}) so that small perturbations of homogeneous state will typically converge to a small-amplitude steady state that resembles the destabilizing mode.  On the other hand, if the bifurcating states exist for $\e<0$  ({\em sub-critical}) then a very different situation occurs: small perturbations of the homogeneous state will typically converge to a large amplitude solution far from equilibria that may or may not resemble the destablizing mode.  Pitchfork bifurcations rely on the lack of quadratic interactions between the bifurcating solutions.  When such interactions do occur then the bifurcation is generally {\em transcritical} where a single branch of equilibrium exist for $\e>0$.  In both the pitchfork and transcritical case the bifurcating solution resembles the destabilizing mode, but whether this state is selected or the system transitions to some other large-amplitude state near the bifurcation depends on the particular initial conditions.    }

Since the eigenfunctions in this case are Fourier basis functions, the analysis will bear some resemblance to the case of Turing bifurcations that occur for PDEs with local spatial interactions coming from the spatial derivatives.  In Section \ref{sec:graphonsinglemode} that follows, we consider the case when a single pair of complex conjugate modes bifurcate and classify the bifurcation as a sub- or super-critical pitchfork, depending on the choice of parameters. In contrast, we show in Section \ref{sec:graphonresonance} that if the bifurcating eigenvalue is not simple then the resulting bifurcation may resemble a transcritical bifurcation, depending on whether there exists a resonance between the two bifurcating modes.

Prior to moving on to our analysis, we point out that many other bifurcation scenarios are possible beyond those that we cover in the following subsections. In such cases, bifurcating modes may or may not be in resonance and, depending on the particulars of their interactions, we could obtain center manifold reductions akin to those obtained in the following subsections.  We do not pursue these calculations here as these further scenarios are not generic. {\color{black} Our intention with the non-simple bifurcation scenario covered in Section \ref{sec:graphonresonance} is simply to illustrate how the center manifold reductions would be carried out for these more complex scenarios and to demonstrate that the bifurcations are fundamentally different than the simple bifurcation case covered in Section \ref{sec:graphonsinglemode}.}

{\color{black} Finally, a more exotic scenario can occur when the bifurcation involves the accumulation point at $-c_0$ from the Fourier expansion \eqref{GraphonFourier}, coming from the fact that $\lambda_k \to -c_0$ as $k \to \pm\infty$.} If $\kappa = -c_0$ the bifurcation at $\e = 0$ has infinite co-dimension and the methods of center manifold theory do not apply. Therefore, we are unable to classify the bifurcation rigorously in this scenario and leave it for a potential follow-up investigation. However, we will return to this case in Section~\ref{sec:numerics} where we perform a numerical investigation from an eigenvalue of a random graph near the cluster point of the graphon Laplacian (after rescaling by the number of vertices).


\subsection{Case I: Bifurcation through a single mode} \label{sec:graphonsinglemode} 

In this section, we prove the following result that describes the bifurcation for a single mode. We recall throughout that $\ell_k$ are given in \eqref{graphLapEig} and correspond to the eigenvalues of the graphon Laplacian.   

\begin{theorem}\label{thm:graphonsinglemode} 
Fix $\kappa<0$ and suppose that there exists a $k^*\in\mathbb{N}$ such that $\ell_{k^*}=\ell_{-k^*}=0$ while $\ell_k<0$ for all $k\neq \pm k^*$.  Then for all $|\e|$ sufficiently small \eqref{eq:SHgraphon} has an attracting two-dimensional center manifold in a neighborhood of $u = 0$ and the flow restricted to this center manifold is topologically equivalent to the system of ordinary differential equations
\be \label{eq:case1CM} 
	\begin{split}
		\frac{dw_{k^*}(t)}{dt}&= \e w_{k^*}  -\left(3b+\frac{4r^2}{\ell_0}+\frac{2r^2}{\ell_{2k^*}}   \right) w_{k^*}^2 w_{-k^*}   +\O(4) \\
		\frac{dw_{-k^*}(t)}{dt}&= \e w_{-k^*} -\left(3b+\frac{4r^2}{\ell_0}+\frac{2r^2}{\ell_{2k^*}} \right) w_{k^*}w_{-k^*}^2+\O(4).
	\end{split}
\ee
If  $3b+4\frac{r^2}{\ell_0}+2\frac{r^2}{\ell_{2k^*}}>0$, then the bifurcation at $\e = 0$ is super-critical and there exists an  $\e_0>0$ such that for all $0<\e<\e_0$ and $\phi \in \mathbb{R}$ there exists  a {\color{black} stable} stationary solution $u_{\e,\phi}$ of \eqref{eq:SHgraphon} with expansion 
\be \label{eq:uphi} 
	u_{\e,\phi}(x)=2\sqrt{\frac{\e}{3b+4\frac{r^2}{\ell_0}+2\frac{r^2}{\ell_{2k^*}}}}\cos\left(2\pi k^*(x-\phi)\right)+\O(\e). 
\ee
On the other hand, if $3b+4\frac{r^2}{\ell_0}+2\frac{r^2}{\ell_{2k^*}}<0$ then the bifurcation is sub-critical and an {\color{black} unstable} stationary solution of \eqref{eq:SHgraphon} of the form \eqref{eq:uphi} exists for all $-\e_0<\e<0$.  
\end{theorem}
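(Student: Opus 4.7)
The plan is to apply a parameter-dependent center-manifold reduction to \eqref{eq:SHgraphon} about the trivial equilibrium at $\e=0$, compute the reduced flow up to cubic order by solving the invariance equation Fourier-mode-by-mode, and then read off the classification from the sign of the cubic coefficient. The natural functional setting is the Banach algebra $L^\infty([0,1])$ (which makes the Nemytskii maps $u\mapsto u^2,u^3$ smooth and on which $\L$ is bounded since $W$ is bounded), with spectral decomposition inherited from $L^2$ via the orthonormal Fourier basis. Because $W$ is a ring graphon, the basis $\{e^{2\pi\mbi k x}\}_{k\in\mathbb{Z}}$ diagonalizes $\mathrm{S}_W=-(\L-\kappa)^2$ with eigenvalues $\ell_k$. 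The hypothesis gives a two-dimensional center subspace $E_c=\mathrm{span}\{e^{\pm 2\pi\mbi k^* x}\}$; the remaining eigenvalues are strictly negative, and the accumulation point $-(c_0+\kappa)^2$ is strictly negative (since $\ell_{\pm k^*}=0$ forces $\kappa=\lambda_{k^*}\neq -c_0$), producing the spectral gap needed to invoke a standard center-manifold theorem (e.g.\ Haragus--Iooss) and yielding a smooth invariant graph $u=w_{k^*}e^{2\pi\mbi k^* x}+w_{-k^*}e^{-2\pi\mbi k^* x}+h(w_{k^*},w_{-k^*},\e)$, with $h$ valued in $E_s=E_c^\perp$ and $h=\O(|w|^2+|w|\e)$; reality enforces $w_{-k^*}=\overline{w_{k^*}}$.

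Write $u^{(1)}=w_{k^*}e^{2\pi\mbi k^* x}+w_{-k^*}e^{-2\pi\mbi k^* x}$ and let $P_s$ be the spectral projection onto $E_s$. Matching the invariance equation $D_w h\cdot\dot w=\mathrm{S}_W h+\e h+P_s[r(u^{(1)}+h)^2-b(u^{(1)}+h)^3]$ at quadratic order in $w$ gives $\mathrm{S}_W h^{(2)}=-r\,P_s(u^{(1)})^2$. Expanding
\begin{equation*}
(u^{(1)})^2 = w_{k^*}^2\,e^{4\pi\mbi k^* x}+2w_{k^*}w_{-k^*}+w_{-k^*}^2\,e^{-4\pi\mbi k^* x},
\end{equation*}
whose three nonzero Fourier modes all lie in $E_s$, and inverting $\mathrm{S}_W$ mode-by-mode using $\mathrm{S}_W e^{2\pi\mbi nx}=\ell_n e^{2\pi\mbi nx}$ yields
\begin{equation*}
h^{(2)} = -\frac{r w_{k^*}^2}{\ell_{2k^*}}e^{4\pi\mbi k^* x}-\frac{r w_{-k^*}^2}{\ell_{2k^*}}e^{-4\pi\mbi k^* x}-\frac{2 r w_{k^*}w_{-k^*}}{\ell_0}.
\end{equation*}
Projecting the full right-hand side onto $e^{2\pi\mbi k^* x}$ gives $\dot w_{k^*}=\e w_{k^*}+[ru^2-bu^3]_{k^*}$; the cubic contributions come from $-b(u^{(1)})^3$ (with coefficient $3 w_{k^*}^2 w_{-k^*}$) and from $2r u^{(1)}h^{(2)}$ via the two resonant pairings (the mode-$0$ piece of $h^{(2)}$ with $w_{k^*}e^{2\pi\mbi k^* x}$, and the mode-$2k^*$ piece with $w_{-k^*}e^{-2\pi\mbi k^* x}$). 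Summing produces exactly the coefficient $-(3b+4r^2/\ell_0+2r^2/\ell_{2k^*})$ in \eqref{eq:case1CM}; the equation for $w_{-k^*}$ follows by complex conjugation, and the higher-order remainder is supplied by the regularity of $h$.

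For the bifurcation analysis, I would pass to polar coordinates compatible with the $S^1$-equivariance inherited from translation invariance of the ring graphon. Setting $w_{k^*}=Ae^{-2\pi\mbi k^*\phi}$, $w_{-k^*}=Ae^{2\pi\mbi k^*\phi}$ with $A\geq 0$, \eqref{eq:case1CM} decouples into $\dot A=\e A-c A^3+\O(A^4)$ and $\dot\phi=0$, where $c=3b+4r^2/\ell_0+2r^2/\ell_{2k^*}$. Standard pitchfork analysis gives a branch $A_\e=\sqrt{\e/c}+\O(\e)$ existing for $\e>0$ when $c>0$ (super-critical; stable on the center manifold, hence stable in the full equation because $E_s$ is exponentially attracting) and for $\e<0$ when $c<0$ (sub-critical; unstable). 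Substituting back and folding the $\O(\e)$ corrections from $h^{(2)}$ evaluated on the branch into the remainder produces
\begin{equation*}
u_{\e,\phi}(x) = 2 A_\e\cos\bigl(2\pi k^*(x-\phi)\bigr)+\O(\e),
\end{equation*}
which is \eqref{eq:uphi}.

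The main obstacle is not the algebra, which is routine once the reduction is in place, but the clean verification of the hypotheses of an infinite-dimensional center-manifold theorem in a function-space setting that simultaneously makes the polynomial nonlinearity smooth, preserves the two-mode Fourier projection as a bounded rank-two operator, and retains the spectral gap of $\mathrm{S}_W$ on the complementary subspace. Working on $L^\infty$ and exploiting the fact that the Fourier modes of a ring graphon are simultaneously eigenfunctions in every $L^p$ handles all three requirements, but one must track the order of smoothness required for the cubic expansion of $h$ in $(w_{k^*},w_{-k^*},\e)$ and ensure the chosen theorem delivers it; this is also the step where the almost-everywhere continuity of $W$ is implicitly used to guarantee boundedness and self-adjoint spectral decomposition of $\L$.
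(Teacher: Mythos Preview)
Your proposal is correct and follows essentially the same route as the paper: apply an infinite-dimensional center-manifold theorem (the paper likewise cites Haragus--Iooss), expand in Fourier modes, solve the invariance equation to obtain the quadratic piece of the graph supported on modes $0,\pm 2k^*$ with exactly the coefficients you compute, substitute back to read off the cubic coefficient $-(3b+4r^2/\ell_0+2r^2/\ell_{2k^*})$, and then reduce to a scalar amplitude equation via $w_{k^*}=A e^{-2\pi\mbi k^*\phi}$ to classify the pitchfork. Your treatment is, if anything, more explicit than the paper's about the functional-analytic setting (the choice of $L^\infty$ as a Banach algebra and the verification of the spectral gap at the accumulation point), which the paper handles by a bare citation.
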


\begin{proof}
The derivation of \eqref{eq:case1CM} is an application of the Center Manifold Theorem in infinite dimensions; see for example \cite{haragus}.
We begin with the eigenfunction expansion of a solution $u(t,x)$ of \eqref{eq:SHgraphon} in the form
\be
	u(t,x)=\sum_{k\in\mathbb{Z}} w_k(t,\e) e^{2\pi\mbi k x}. 
\ee
{\color{black} For an arbitrary element $\phi \in L^2$}, define the projection operator  $P_*:L^2\to L^2$ by
\be \label{Projection}
	P_* \phi= \langle \phi, e^{2\pi\mbi k_* x} \rangle e^{2\pi\mbi k_* x} +\langle \phi, e^{-2\pi\mbi k_* x} \rangle e^{-2\pi\mbi k_* x},
\ee
where $\langle \cdot,\cdot\rangle$ is the standard Hermitian inner product on the Hilbert space $L^2$. Since the range of $P_*$ is finite-dimensional, it is closed, thus leading to the decomposition of $u$ into $u_*=P_* u$ and $u_\perp=(\mathrm{I}-P_*)u$. Applying $P_*$ and $(\mathrm{I} - P_*)$ to \eqref{eq:SHgraphon} results in the equations 
\be\label{graphonSplit}
	\begin{split}
		\frac{du_*}{dt}&= \epsilon u_* + P_* ( ru^2-bu^3) \\
		\frac{du_\perp}{dt}&= S_\perp u_\perp +\epsilon u_\perp+ \left(\mathrm{I}-P_*\right) ( ru^2-bu^3) \\
		\frac{d\e}{dt} &=0 
	\end{split}
\ee
where $S_\perp$ is the restriction of $-(\L - \kappa)^2$ onto the range of $\mathrm{I}-P_*$. {\color{black} Note that we have trivially included the parameter $\e$ as a variable, as is standard with an application of the center manifold theorem}. By assumption, the spectrum of $S_\perp$ is bounded away from the imaginary axis, and therefore center manifold theory implies the existence of a local, flow-invariant center manifold parametrized as $u_\perp=\Psi(u_*,\epsilon)$ \cite{haragus}. The reduced flow on the {\color{black} ($\e$-dependent)} two dimensional center manifold is then given by 
\be
	\frac{du_*}{dt}= \epsilon u_* +P_* \mathcal{N}(u_*+\Psi(u_*,\epsilon)), \label{eq:CMformal} 
\ee
after putting $u_\perp=\Psi(u_*,\epsilon)$ into the first equation in \eqref{graphonSplit} and letting the function $\mathcal{N}$ simply denote all nonlinear terms on the right-hand-side.  To compute the reduced flow on the center manifold we therefore must obtain expansions for $\Psi(u_*,\e)$ and then expand the projection onto the center eigenspace in (\ref{eq:CMformal}).  

We now compute leading order nonlinear terms in $\mathcal{N}$ above.  Using the fact that $u_* = P_*u$ and the definition of $P_*$ in \eqref{Projection}, the equations for the critical bifurcating modes $w_{k^*}$ and $w_{-k^*}$ are 
\be
	\begin{split}
		\frac{dw_{k^*}(t)}{dt}&= \e w_{k^*}+r \sum_{k_1+k_2=k^*} w_{k_1}w_{k_2} -b \sum_{k_1+k_2+k_3=k^*} w_{k_1}w_{k_2}w_{k_3} \\
		\frac{dw_{-k^*}(t)}{dt}&= \e w_{-k^*}+r \sum_{k_1+k_2=-k^*} w_{k_1}w_{k_2} -b \sum_{k_1+k_2+k_3=-k^*} w_{k_1}w_{k_2}w_{k_3} .
	\end{split} \label{eq:crit}
\ee
{\color{black} We now compute quadratic expansions for the graph of the center manifold.  Using $w_{k^*}$ and $w_{-k^*}$ as coordinates on the center manifold  (and  recalling that we treat $\e$ as a dependent variable) we express the $k$-th component of the graph $\Psi(u_*,\e)$  as  $H_k(w_{k^*},w_{-k^*},\e)$ with the expansion (using multi-index notation)
\be 
	H_k(w_{k^*},w_{-k^*},\e)=\sum_{|\alpha|\geq 2} \zeta_{k,\alpha} w_{k^*}^{\alpha_1} w_{-k^*}^{\alpha_2}\e^{\alpha_3},  \label{eq:wkgraph}
\ee
where $\alpha=(\alpha_1,\alpha_2,\alpha_3)$.  In what follows we will require computation of some of the quadratic  coefficients $\zeta_{k,\alpha}$.  The coefficients can be computed as follows:  substitute (\ref{eq:wkgraph}) for $k\neq \pm k^*$ into the differential equation for $w_k$,
\be \frac{dw_{k}(t)}{dt}=\ell_0 w_k+ \e w_{k}+r \sum_{k_1+k_2=k} w_{k_1}w_{k_2} -b \sum_{k_1+k_2+k_3=k} w_{k_1}w_{k_2}w_{k_3}, \label{eq:wk} \ee
expand each side as a Taylor series in $w_{k^*}$, $w_{-k^*}$ and $\e$ and then match coefficients to solve for each $\zeta_{k,\alpha}$.  

We begin with computing the quadratic terms.  For any $k\neq \pm k^*$, substitution of (\ref{eq:wkgraph}) into the left side of (\ref{eq:wk}) yields the terms 
\be \sum_{|\alpha|\geq 2}\zeta_{k,\alpha}\left(\alpha_1 w_{k^*}^{\alpha_1-1} w_{-k^*}^{\alpha_2}\e^{\alpha_3}\frac{dw_{k^*}}{dt}+ \alpha_2 w_{k^*}^{\alpha_1} w_{-k^*}^{\alpha_2-1}\e^{\alpha_3}\frac{dw_{-k^*}}{dt}\right), \label{eq:quadleft} \ee
where we have used that $\frac{d\e}{dt}=0$.  Recall from (\ref{eq:crit}) that all terms in the differential equation for the modes $w_{k^*}$ and $w_{-k^*}$ are of quadratic or higher order.  Therefore the terms in (\ref{eq:quadleft}) are all of cubic or higher order and not relevant to the computation of the quadratic coefficients defining the center manifold.  Upon substituting (\ref{eq:wkgraph}) into the right hand side of (\ref{eq:wk}) we get have 
\begin{eqnarray} (\ell_k+\e) H_k(w_{k^*},w_{-k^*},\e)&+&r \sum_{k_1+k_2=k} H_{k_1}(w_{k^*},w_{-k^*},\e)H_{k_2}(w_{k^*},w_{-k^*},\e)  \nonumber \\
&-&b \sum_{k_1+k_2+k_3=k} H_{k_1}(w_{k^*},w_{-k^*},\e)H_{k_2}(w_{k^*},w_{-k^*},\e)H_{k_3}(w_{k^*},w_{-k^*},\e). \label{eq:quadright} \end{eqnarray}
Again focusing on quadratic terms, we observe that if $k\neq 0, \pm 2k^*$ then the only quadratic terms arising in (\ref{eq:quadright}) are $\ell_k \zeta_{k,\alpha}w_{k^*}^{\alpha_1}w_{-k^*}^{\alpha_2}\e^{\alpha_3}$ and therefore matching quadratic terms on the left and right requires $\zeta_{k,\alpha}=0$ for $k\neq 0, \pm 2k^*$ and any $|\alpha|=2$.  

Now turning to the modes $k=0$ and $k=\pm2k^*$, (\ref{eq:quadright}) we see that the graph of the center manifold for these components have the following expansions
\be\label{eq:w0}
	\begin{split}
 		w_0&= \zeta_{0,(2,0,0)}w_{k^*}^2+ \zeta_{0,(1,1,0)} w_{k^*} w_{-k^*} +\zeta_{0,(0,2,0)} w_{-k^*}^2 +\O(3) \\
 		w_{\pm 2k^*} &= \zeta_{\pm 2k^*,(2,0,0)}w_{k^*}^2+ \zeta_{\pm 2k^*,(1,1,0)} w_{k^*} w_{-k^*} +\zeta_{\pm 2k^*,(0,2,0)} w_{-k^*}^2 +\O(3). 
	\end{split}
\ee
To compute the remaining coefficients we again use (\ref{eq:quadright}).  For $w_0$, using $(k_1,k_2)=(k^*,-k^*)$ and $(k_1,k_2)=(-k^*,k^*)$  we find that the quadratic terms in (\ref{eq:quadright}) are
\be \ell_0\left( \zeta_{0,(2,0,0)}w_{k^*}^2+ \zeta_{0,(1,1,0)} w_{k^*} w_{-k^*} +\zeta_{0,(0,2,0)} w_{-k^*}^2\right) +2r w_{k^*}w_{-k^*}+\O(3). \ee
We therefore obtain  $\zeta_{0,(2,0,0)}=\zeta_{0,(0,2,0)}=0$ while 
\be \zeta_{0,(1,1,0)}=-\frac{2r}{\ell_0}. \ee
A similar analysis obtains  $\zeta_{\pm 2k^*,(1,1,0)}=\zeta_{- 2k^*,(2,0,0)}=\zeta_{2k^*,(0,2,0)}=0$ and 
\be
	\begin{split}
	 	\zeta_{2k^*,(2,0,0)} &= -\frac{r}{\ell_{2k^*}} \\
 		\zeta_{-2k^*,(0,2,0)}&=- \frac{r}{\ell_{2k^*}} .
	\end{split}
\ee
Equation (\ref{eq:quadright}), together with the quadratic expansions of the graph of the center manifold, could be used to compute cubic and higher order terms in (\ref{eq:wkgraph}), but these will not be required for our analysis.  We remark that the coefficients $\zeta_{k,\alpha}$ may equivalently  be obtained through normal form computations wherein terms $w_{k^*}^{\alpha_1}w_{-k^*}^{\alpha_2}\e^{\alpha_3}$ are removed from the right hand side of the differential equation through a sequence of near-identity coordinate changes that serve to map -- order by order --  the center manifold to the center subspace; see for example \cite{chow94}.   }

We now proceed to compute the flow on the center manifold.  This is obtained by substituting the graph $w_k=H_k(w_{k^*},w_{-k^*},\e)$ into the differential equation for the modes $w_{\pm k^*}$; see (\ref{eq:crit}). Recall that all $H_k$ are quadratic in the center manifold coordinates $w_{\pm k^*}$ and $\e$.  Inspecting the terms in the summations, we see that the quadratic terms $w_{k^*}^2$, $w_{k^*}w_{-k^*}$ and $w_{-k^*}^2$ are all absent and therefore owing to the quadratic dependence of the center manifold expansions we have that the nonlinearity for the reduced equation on the center manifold involves exclusively terms that are cubic order or higher.  In the equation for $w_{k^*}$, one of these cubic terms comes from the interaction $(k_1,k_2,k_3)= (k^*,k^*,-k^*)$ and after including all permutations of indices we obtain the $-3b$ term in the center manifold reduction \eqref{eq:case1CM}.  Similar computations involving the quadratic interaction terms with $(k_1,k_2) = (0,k^*)$ and $(k_1,k_2) = (2k^*,-k^*)$ yield the remaining cubic terms in the center manifold expansion, given in \eqref{eq:case1CM}. 

Since $u(x,t)$ is real, we have $w_{k^*}=\overline{w_{-k^*}}$ and so letting $w_{k^*}(t)=z(t) e^{-2\pi k^* \mbi\phi}$ leads to a {\color{black} scalar differential equation} for $z(t)$,
\be\label{eq:scalarz}  
	\frac{dz}{dt}=\e z -\left(3b+4\frac{r^2}{\ell_0}+2\frac{r^2}{\ell_{2k^*}} \right)z^3 +\O(z \e^2,z^4). 
\ee
Let 
\be
	\Gamma := 3b+4\frac{r^2}{\ell_0}+2\frac{r^2}{\ell_{2k^*}}. 
\ee
If $\Gamma>0$, the implicit function theorem gives the existence of an equilibrium solution of \eqref{eq:scalarz}, valid for sufficiently small $\e>0$, with expansion
\be 
	z_\pm(\e)=\pm \sqrt{\frac{\e}{\Gamma}} +\O(\e).
\ee
In this case that the resulting bifurcation is a super-critical pitchfork. Since $z$ is an amplitude we consider only the root $z_+(\e)$ and after reverting coordinates we obtain the result of the theorem.  If $\Gamma<0$ then the same formula holds but is valid only for $\e<0$ and the bifurcation is sub-critical. This completes the proof. 
\end{proof} 


\subsection{Case II: Bifurcation through repeated modes in resonance } \label{sec:graphonresonance}

Let us now consider the case when two modes simultaneously destabilize.  This occurs when there exists a $k_1$ and a $k_2$ for which $\ell_{k_1}=\ell_{-k_1}=\ell_{k_2}=\ell_{-k_2}=0$, as given in \eqref{graphLapEig}. In this subsection we will primarily focus on the case that the two modes are in $2:1$ resonance, that is $k_2=2k_1$. The following result shows that this case leads to quadratic terms in the reduced equations on the center manifold. 

\begin{theorem}  \label{thm:graphonresonance} 
Suppose $r \neq 0$ and suppose that for $\kappa < 0$ fixed there exists a pair $0<k_1<k_2$ such that $\ell_{k_1}=\ell_{-k_1}=\ell_{k_2}=\ell_{-k_2}=0$ and $\ell_k< 0$ for all other $k$.   

a) If $k_2=2k_1$, then, for $|\e|$ sufficiently small there exists an attracting four dimensional center manifold in a neighborhood of $u = 0$ whose dynamics are topologically conjugate to the ordinary differential equations 
\be\label{eq:case2CM} 
	\begin{split}
		\frac{dw_{k_1}(t)}{dt}&= \e w_{k_1}+2rw_{k_2}w_{-k_1}  +\O(3)  \\
		\frac{dw_{-k_1}(t)}{dt}&= \e w_{-k_1}+2rw_{-k_2}w_{k_1}  +\O(3)  \\
		\frac{dw_{k_2}(t)}{dt}&= \e w_{k_2}+rw_{k_1}^2 +\O(3)  \\
		\frac{dw_{-k_2}(t)}{dt}&= \e w_{-k_2} +r w_{-k_1}^2 +\O(3).
	\end{split}
\ee
The resulting bifurcation leads to a pair of one-parameter bifurcating solutions with expansions
\be u_{\e,\phi,\pm}(x)=\pm \frac{\sqrt{2}\e}{r} \cos\left(2\pi k_1 (x-\phi)\right) -\frac{\e}{r} \cos\left(4\pi k_1 (x-\phi)\right)+\O(\e^2), \label{eq:resonantexpgraphon} \ee 
for any $\phi\in\mathbb{R}$.  

b) If $0<k_1<k_2$ and $k_2>3k_1$ then for $\e$ sufficiently small there exist an attracting four dimensional center manifold in a neighborhood of the origin.  The center manifold dynamics decouple to cubic order and each pair $w_{\pm k_1}$ and $w_{\pm k_2}$ obey equations analogous to \eqref{eq:case1CM}.
\end{theorem}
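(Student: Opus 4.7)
My plan is to adapt the center-manifold reduction from the proof of Theorem~\ref{thm:graphonsinglemode} to the four-dimensional critical eigenspace $E_c = \mathrm{span}\{e^{\pm 2\pi k_1 \mbi x}, e^{\pm 2\pi k_2 \mbi x}\}$. Since $\ell_k < 0$ for every $k \notin \{\pm k_1, \pm k_2\}$, the linearization $-(\L - \kappa)^2$ has a spectral gap separating $E_c$ from its complement, so the infinite-dimensional center manifold theorem \cite{haragus} provides an attracting, locally invariant, four-dimensional center manifold. Writing a solution as $u(t,x) = \sum_k w_k(t) e^{2\pi k \mbi x}$, the manifold is parametrized by the critical amplitudes $(w_{k_1}, w_{-k_1}, w_{k_2}, w_{-k_2})$ and $\e$, while every non-critical mode is represented as a graph $w_k = H_k(w_{\pm k_1}, w_{\pm k_2}, \e)$.

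For part (a) the crucial step is to identify which quadratic self-interactions $r\sum_{p+q=k}w_p w_q$ in \eqref{eq:SHgraphon} remain on the center manifold. Such a contribution survives only when both $p, q$ are critical and $p + q$ is critical as well. With $k_2 = 2k_1$ the critical pairs summing to $k_1$ are $(-k_1, k_2)$ and $(k_2, -k_1)$, producing $2 r w_{-k_1} w_{k_2}$, while the only critical pair summing to $k_2$ is $(k_1, k_1)$, producing $r w_{k_1}^2$; complex conjugation yields the analogous terms for $w_{-k_1}$ and $w_{-k_2}$. Substitution of the graph $H_k$ into the nonlinearity contributes only at cubic or higher order and is absorbed into the $\O(3)$ remainders, yielding \eqref{eq:case2CM} directly.

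To produce the bifurcating solutions I would exploit reality of $u$ (so $w_{-k} = \overline{w_k}$) and the continuous translational symmetry of the ring graphon by imposing the ansatz $w_{k_1}(t) = a e^{-2\pi k_1 \mbi \phi}$ and $w_{k_2}(t) = b e^{-2\pi k_2 \mbi \phi}$ with $a, b \in \R$ and $\phi \in \R$ an arbitrary phase. Setting time derivatives in \eqref{eq:case2CM} to zero reduces the four-dimensional equilibrium problem to the two real algebraic equations $\e a + 2 r a b = \O(3)$ and $\e b + r a^2 = \O(3)$. The rescaling $a = \e \tilde a$, $b = \e \tilde b$ produces the leading-order system $\tilde a(1 + 2r\tilde b) = 0$ and $\tilde b + r\tilde a^2 = 0$ at $\e = 0$, whose nontrivial solutions $\tilde b = -\frac{1}{2r}$ and $\tilde a = \pm \frac{1}{\sqrt 2 r}$ are nondegenerate. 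The implicit function theorem then extends these to smooth branches $(a(\e), b(\e))$ for small $|\e|$, and reconstructing $u$ as a Fourier sum reproduces the expansion \eqref{eq:resonantexpgraphon}.

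For part (b) I would enumerate all sums $p + q$ with $p, q \in \{\pm k_1, \pm k_2\}$, obtaining the set $\{0, \pm 2k_1, \pm 2k_2, \pm(k_1 + k_2), \pm(k_2 - k_1)\}$, and verify that the hypothesis $k_2 > 3k_1 > 0$ excludes every element of $\{\pm k_1, \pm k_2\}$ from this set. Consequently no quadratic resonances remain in the reduced equations, and the leading contributions come from the cubic terms, which are computed as in Theorem~\ref{thm:graphonsinglemode} by first solving for the quadratic graph coefficients $\zeta_{0,\cdot}, \zeta_{\pm 2k_j,\cdot}, \zeta_{\pm(k_1+k_2),\cdot}, \zeta_{\pm(k_2-k_1),\cdot}$ and then substituting. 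Restricting to the invariant subspace $w_{\pm k_2} = 0$ (respectively $w_{\pm k_1} = 0$) collapses the resulting system to exactly the form \eqref{eq:case1CM}. The principal obstacle I anticipate is this combinatorial and bookkeeping exercise in part (b): systematically ruling out every quadratic resonance and tracking the multiple nontrivial quadratic graph coefficients, while confirming that the cubic cross-couplings between the $w_{\pm k_1}$ and $w_{\pm k_2}$ pairs are consistent with the advertised decoupling to cubic order.
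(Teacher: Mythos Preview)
Your proposal is correct and follows essentially the same route as the paper. For part~(a) the paper proceeds identically to your outline: it applies the center manifold theorem to the four-dimensional critical eigenspace, observes that the graph $\Psi$ is quadratic to leading order, and then reads off exactly the resonant quadratic interactions you identify. In solving for the equilibria the paper works in polar form $w_{k_1}=\eta e^{-2\pi\mbi\Omega}$, eliminates $w_{k_2}$ via the second equation, and solves $\e^2-2r^2|w_{k_1}|^2=0$ directly; your rescaling $a=\e\tilde a$, $b=\e\tilde b$ followed by the implicit function theorem is an equivalent and slightly more systematic way to reach the same branches.

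For part~(b) the paper's argument is brief: it asserts that since $k_2>3k_1$ the bifurcating modes ``do not interact under quadratic or cubic coupling'' and that the reduction therefore proceeds as in Theorem~\ref{thm:graphonsinglemode} for each pair separately. Your enumeration of the set $\{0,\pm 2k_1,\pm 2k_2,\pm(k_1\pm k_2)\}$ is exactly the right check for the quadratic level. Your caution about cubic cross-couplings is warranted: terms such as $-6b\,w_{k_1}|w_{k_2}|^2$ do appear in the $w_{k_1}$ equation (from both the direct cubic nonlinearity and the $w_0$ graph coefficient), so the equations do not literally decouple at cubic order. What is true---and what the paper's phrasing should be read as---is that the subspaces $\{w_{\pm k_2}=0\}$ and $\{w_{\pm k_1}=0\}$ are invariant to cubic order, and on each of them the reduced dynamics take exactly the form~\eqref{eq:case1CM}. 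Your invariant-subspace formulation captures this precisely.
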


\begin{proof}
a) Let $k_1=k_*$, so that $k_2 = 2k_*$.  When $\e=0$, by assumption the spectrum of the origin has four zero eigenvalues while the remainder are strictly negative. Once again, there exits a decomposition of $u=u_*+u_\perp$ with $u_*=P_* u$ and $u_\perp=(\mathrm{I}-P_*)u$ where $P_*$ is the spectral projection onto the center eigenspace, spanned by $e^{2\pi\mbi k_* x}$, $e^{4\pi\mbi k_* x}$, and their complex conjugates. The center manifold theorem again applies and we obtain the existence of a four dimensional attracting center manifold parameterized as $u_\perp=\Psi(u_*,\e)$ \cite{haragus}.  The key point is that $\Psi$ is quadratic in $u_*$ and $\varepsilon$ to leading order.   Therefore, the expansion of the graph of the center manifold will only come into play when computing the cubic terms and we can focus on the quadratic terms which only arise through interactions of the four bifurcating modes.   Since $2k_1=k_2$ we obtain the quadratic term $r w_{\pm k_1}^2$ in the equation for $w_{\pm k_2}$ while $k_2-k_1=k_1$ implies that the term $2rw_{\pm k_2}w_{\mp k_1}$ appears in the equation for $w_{\pm k_1}$.  We therefore obtain \eqref{eq:case2CM}. 

We now analyze the steady-states of \eqref{eq:case2CM}.  Since we are interested in real solutions it holds that $w_{k_1}=\overline{w_{-k_1}}$ and $w_{k_2}=\overline{w_{-k_2}}$.  Equilibrium solutions then satisfy the system of complex equations,
\be
	\begin{split}
		0&=\e w_{k_1}+2rw_{k_2}\overline{w_{k_1}}+\O(3)  \\  
		0&=  \e w_{k_2}+rw_{k_1}^2 +\O(3).
	\end{split}
\ee
Up to quadratic order, solutions to the second equation take the form $w_2=-\frac{\e}{r}w_{k_1}^2$.  Substituting into the first equation and again focusing exclusively on the quadratic terms solutions we have
\be 
	\e w_{k_1}\left(\e^2-2r^2 |w_{k_1}|^2\right)=0. 
\ee
Hence,
\be w_{k_1}=\eta e^{-2\pi\mbi \Omega}, \quad \eta= \frac{1}{\sqrt{2}}\left| \frac{\e}{r}\right| \ee 
and
\be w_{k_2}=\frac{-r}{\e} \eta^2 e^{-4\pi\mbi \Omega}=-\frac{\e}{2r}e^{-4\pi\mbi \Omega}, \ee
for some $\Omega\in\mathbb{R}$.  This leads to a pair of bifurcating solutions that have the expansions given in \eqref{eq:resonantexpgraphon}.   

b) Since $k_2>3k_1$ we find that the bifurcating modes do not interact under quadratic or cubic coupling.  Therefore, for $w_{\pm k_1}$ the only terms that are relevant in deducing the quadratic and cubic terms in the reduced equation on the center manifold are those involving $w_{\pm k_1}$, $w_{\pm 2k_1}$ and $w_0$, giving that the analysis follows as in the proof of Theorem~\ref{thm:graphonsinglemode}.  Likewise, the for $w_{\pm k_2}$ the only terms that are relevant in deducing the quadratic and cubic terms in the reduced equation on the center manifold are those involving $w_{\pm k_2}$, $w_{\pm 2k_2}$ and $w_0$ and, again, the reduced equations are (to cubic order) obtained in a manner analogous to those in Theorem~\ref{thm:graphonsinglemode}. We omit the details. 
\end{proof}  

{\color{black}
\begin{rmk} Theorem~\ref{thm:graphonsinglemode} and Theorem~\ref{thm:graphonresonance} are not exhaustive with a multitude of other scenarios being possible. For example, when $k_2=3k_1$ another cubic term arises is the reduced equations on the center manifold. In principle, the analysis presented here could be extended to many cases that are left out of this analysis should an application necessitate it. The main takeaway from Theorem~\ref{thm:graphonresonance} is that resonant interaction of modes can lead to transcritical bifurcations, which we also expect for many of the more exotic scenarios that are not captured by our results in this section. However, one scenario where the center manifold theorem is not sufficient to describe the resulting bifurcation is in understanding bifurcations involving the accumulations point $-c_0$ of the graphon Laplacian. This presents a significant analytical challenge and it is not clear how to obtain similar results to this in this section for such a degenerate situation.
\end{rmk}

\begin{rmk}
If we contrast with the case of the local Swift-Hohenberg PDE in one space dimension we note that it is not typically the case that the inclusion of quadratic terms will lead to the occurrence of a transcritical bifurcation. However, this is not so rare in higher dimensional problems. For example, quadratic terms can lead to a resonance between roll and hexagaonal patterns for the planar Swift--Hohenberg PDE; see for example \cite{cross}.
\end{rmk}}


\section{Turing Bifurcations on Random Graphs} \label{sec:Turinggraph}

In this section we study Turing bifurcations for the Swift--Hohenberg equation \eqref{eq:main} defined on random graphs.  We will consider random graphs of ring networks derived from a ring graphon {\color{black}$W(x,y)=R(|x-y|)$} so that the probability of two nodes having an edge connecting them is a function of the distance between the two nodes.  Based upon the results of Section~\ref{sec:graphon} we expect that when the number of nodes $N$ is sufficiently large then (isolated) eigenvalues and their eigenvectors of the random graph are well approximated by eigenvalues (after scaling) and eigenvectors of the deterministic nonlocal graphon operator, as is detailed in Theorem~\ref{thm:Random}.  

As was pointed out in Section~\ref{sec:TuringonW}, ring networks have the advantageous property that their graphons can be represented in terms of Fourier series and the graphon eigenvalues and eigenvectors are simply derived from the Fourier coefficients and corresponding Fourier basis functions.  Since $W$ is translationally invariant it holds that all eigenvalues aside from the homogeneous one generically have algebraic and geometric multiplicity two.  In creating a random graph from the graphon this translational invariance is broken and so we expect generically that all eigenvalues of the matrix $\gL_r^N$ are simple.  

Our main result is the following, which is the random graph analog of Theorem~\ref{thm:graphonsinglemode}.

\begin{theorem}\label{thm:mainrandom}
{\color{black} Let  $W(x,y)=R(|x-y|)$ be an almost everywhere continuous ring graphon which can be represented as $W(x,y)=\sum_k c_k e^{2\pi \mbi k (x-y)}$.  Assume that there exists a $k^*$ such that $\mu:=c_{k^*}-c_0$ is an isolated eigenvalue of $\L$ with minimal algebraic multiplicity of two and $c_{2k^*}\neq 0$ so that $c_{2k^*}-c_0$ is an eigenvalue of finite (even) algebraic multiplicity $m$.  Then, there exists $\bar{\delta}:=\bar{\delta}(\mu,c_{2k^*}-c_0)>0$ such that for all $0<\delta<\bar{\delta}$ and any  $\gamma\in\left(0,\frac{1}{2}\right)$ there exists a $\bar{N}:=\bar{N}(\delta,\gamma)$, and a $C(\gamma)>0$ 
such that for any $N>\bar{N}$ the following are true with probability at least $1-2Ne^{-CN^{2\gamma}}$:
\begin{enumerate}
\item There exists an eigenvalue $\lambda_1(\gL_r^N)$ satisfying 
\be \left|\frac{\lambda_{1} (\gL_r^N)}{N}-(c_{k^*}-c_0)\right| <\delta. \ee
\item Fix $\kappa=\lambda_1$. Then for $|\e|$ sufficiently small there exists $\Omega \in \mathbb{R}$ such that \eqref{eq:main} has a nontrivial equilibrium solution which may be expanded as 
\be 
	u_j=\sqrt{2}z^*(\e,\delta) \cos\left(2\pi k^* \left(\frac{j-1}{N}-\Omega\right)\right)+\O(\e,\delta^2), 
\ee
for some $z^*(\e,\delta)$ given in \eqref{eq:bifsol}.  {\color{black} The bifurcating solution is stable if $\e>0$ and unstable (a saddle fixed point) if $\e<0$}.
\end{enumerate}}
\end{theorem}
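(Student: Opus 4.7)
The strategy is to combine the spectral convergence of Theorem~\ref{thm:Random} with a finite-dimensional center manifold reduction mirroring Theorem~\ref{thm:graphonsinglemode}, and then to exploit discrete Fourier cancellations on the ring to control the quadratic coefficient of the reduced flow.

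First apply Theorem~\ref{thm:Random} at the isolated graphon eigenvalue $\mu = c_{k^*} - c_0$, whose (at least two-dimensional) eigenspace is spanned by the real trigonometric functions $\sqrt 2\cos(2\pi k^* x)$ and $\sqrt 2\sin(2\pi k^* x)$. With probability at least $1-2Ne^{-CN^{2\gamma}}$ one obtains two eigenvalues $\lambda_1\geq\lambda_2$ of $\gL_r^N$ with $|\lambda_k/N - \mu|<\delta$, together with an orthogonal $2\times 2$ rotation $\hat O$ so that the real eigenvectors $\hat\v_1,\hat\v_2$ are jointly $\O(N^{-(1/2-\gamma)})$-close in Frobenius norm to the cosine/sine basis vectors on the grid. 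Since any two-dimensional orthogonal transformation of the cosine/sine basis amounts to a phase shift (up to sign), this yields a phase $\Omega\in\R$ and a reference vector $\v_*\in\R^N$ with entries $v_{*,j}=\sqrt{2/N}\,\cos(2\pi k^*((j-1)/N-\Omega))$ satisfying $\|\hat\v_1-\v_*\|_2=\O(N^{-(1/2-\gamma)})$. Reapplying Theorem~\ref{thm:Random} at the isolated eigenvalue $c_{2k^*}-c_0$ yields analogous approximations for the $m$ eigenvectors clustered near $N(c_{2k^*}-c_0)$, while $\mathbf 1/\sqrt N$ is already an exact eigenvector of $\gL_r^N$ at eigenvalue zero.

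Fix $\kappa=\lambda_1$, so that $\ell_1:=-(\lambda_1-\kappa)^2=0$ while every other eigenvalue of $\gM:=-(\gL_r^N-\kappa\gI)^2$ is strictly negative. The finite-dimensional center manifold theorem then yields a one-dimensional attracting center manifold for \eqref{eq:main} on which the reduced flow reads
\be
\frac{dz}{dt} = \e z + a_2(N) z^2 + a_3(N) z^3 + \O(z\e^2, z^4),
\ee
with $z=\hat\v_1^T\u$, $a_2(N)=r\,\hat\v_1^T(\hat\v_1\circ\hat\v_1)$, and $a_3(N)$ obtained by solving the quadratic homological equation and projecting the resulting cubic back onto $\hat\v_1$, exactly as in the Fourier computation of Theorem~\ref{thm:graphonsinglemode}. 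Substituting $\hat\v_1\approx\v_*$ and using $\cos^3\theta=\tfrac34\cos\theta+\tfrac14\cos 3\theta$ shows that the idealized sum $\sum_j v_{*,j}^3$ vanishes whenever $3k^*\not\equiv 0\pmod N$, so the Frobenius bound from the first paragraph forces $|a_2(N)|=\O(N^{-(1/2-\gamma)})=\O(\delta)$. For $a_3(N)$, the resolvent terms in the quadratic homological equation are controlled by the spectral gaps separating $\lambda_1$ from the clusters at $0$ and $N(c_{2k^*}-c_0)$, and combining these bounds with the eigenvector approximations at those clusters gives $a_3(N)\to-\Gamma$ as $N\to\infty$, where $\Gamma:=3b+4r^2/\ell_0+2r^2/\ell_{2k^*}$ is the graphon coefficient appearing in \eqref{eq:case1CM}.

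The nontrivial equilibrium $z^*(\e,\delta)$ is then obtained from the quadratic formula applied to $\e + a_2 z + a_3 z^2 = 0$, which yields a real root on both sides of $\e=0$ provided the discriminant $a_2^2 - 4a_3\e$ stays positive, i.e.\ for $|\e|$ sufficiently small relative to $a_2^2=\O(\delta^2)$. Reconstructing $\u\approx z^*\hat\v_1$ and using $\hat\v_1\approx\v_*$ produces the claimed cosine expansion, and stability follows from the sign of $\e + 2a_2 z^* + 3a_3 (z^*)^2$, which is controlled by $\e$ alone once $a_3 < 0$ is fixed in the large-$N$ limit. The main technical obstacle is the quantitative estimate of $a_3(N)$: unlike the graphon setting where the center manifold correction is diagonal in the Fourier basis, on the random graph one must invert $\gM$ on a non-center subspace that mixes every eigenvector of $\gL_r^N$, so the argument requires simultaneously tracking Davis--Kahan-type eigenvector approximations at all three relevant spectral clusters and controlling spurious contributions from the accumulation point $-c_0$ of the graphon spectrum.
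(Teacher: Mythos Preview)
Your overall strategy---spectral approximation via Theorem~\ref{thm:Random}, followed by a one-dimensional center manifold reduction in which the quadratic self-interaction is shown to be small by discrete Fourier orthogonality---is exactly the route the paper takes. The paper likewise expands $\u$ in the random-graph eigenbasis, computes $\beta=\sqrt N\langle\v_1,\v_1\circ\v_1\rangle=\O(\delta)$ by replacing $\v_1$ with its Fourier approximation, solves the quadratic homological equation for the modes near $0$ and $2k^*$, and reads off the reduced scalar equation.

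There is, however, a genuine error in your treatment of the cubic coefficient. You assert $a_3(N)\to-\Gamma$ with $\Gamma=3b+4r^2/\ell_0+2r^2/\ell_{2k^*}$, but this is false. Since $\kappa=\lambda_1\approx N\mu$ and every non-center eigenvalue of $\gL_r^N$ differs from $\kappa$ by order $N$, the eigenvalues $l_k$ of $\gM=-(\gL_r^N-\kappa\gI)^2$ entering the homological equation scale like $N^2$. After the $\sqrt N$ rescaling the cubic coefficient becomes
\[
\Gamma_r=\frac{3b}{2}+\frac{2r^2}{l_N}+\frac{r^2}{l_3}\longrightarrow \frac{3b}{2},
\]
not $\Gamma$. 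The graphon constant $\Gamma$ only reappears after an additional $N$-dependent rescaling of $r$, $b$, $\e$ and $t$ (this is the content of the Remark following the proof in the paper). The distinction matters for the stability claim: your argument would predict that when the graphon bifurcation is subcritical ($\Gamma<0$) the random-graph bifurcation is too, whereas in fact $\Gamma_r>0$ for all large $N$ because $b>0$, and this is precisely what forces the stated ``stable for $\e>0$'' conclusion.

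Finally, the ``technical obstacle'' you flag at the end is less severe than you suggest: because $|l_k|\geq(\delta_0-\delta)^2N^2$ for every non-center mode, the resolvent factors $1/l_k$ in the homological equation already suppress contributions from eigenvectors near the accumulation point $-c_0$, so no separate Davis--Kahan control is needed there. The paper simply absorbs those terms into the $\O(\delta)$ remainder.
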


\begin{rmk}Prior to providing the proof, we comment on the condition that $c_{2k^*} \neq 0$. If we had $c_{2k^*} = 0$, then the eigenvalue $c_{2k^*}-c_0$ lies at the accumulation point of the spectrum of $\mathcal{L_W}$. We would still be able to perform a center manifold reduction and obtain an equation analogous to \eqref{eq:genfirstreduced} below, however, since Theorem~\ref{thm:Random} does not provide good control over the difference between the eigenvalues of $\gL_r^N$ and those near the accumulation point for the spectrum of $\mathcal{L}(W)$, we do not have the means to predict the cubic coefficient on the one-dimensional center manifold based upon properties of the graphon alone. Therefore, this case is omitted since our analysis cannot be applied in this situation.
\end{rmk}


\begin{proof} 

{\color{black} Let $\gamma\in \left(0,\frac{1}{2}\right)$.  By Theorem~\ref{thm:Random} there exists a $\delta_0(\mu)$ such that for any $0<\delta<\delta_0(\mu)$ there exists a $N_0(\mu,\delta,\gamma)\geq 1$ such that with probability at least $1-2Ne^{-CN^{2\gamma}}$ there exists two eigenvalues $\lambda_{1,2}(\gL_r^N)$ satisfying 
\be 
	\left|\frac{\lambda_{1,2} (\gL_r^N)}{N}-(c_{k^*}-c_0)\right| <\delta, \label{eq:rgevals1to2} 
\ee
Similarly, since $c_{2k^*}-c_0$ is also an isolated eigenvalue of $\mathcal{L}(W)$, by Theorem~\ref{thm:Random} there exists a $\delta_0(c_{2k^*}-c_0)$ such that for any $0<\delta<\delta_0(c_{2k^*}-c_0)$ there exists a $N_0(c_{2k^*}-c_0,\delta,\gamma)\geq 1$ such that with probability at least $1-2Ne^{-CN^{2\gamma}}$ there exists $m$ eigenvalues satisfying 
\be 
	\left|\frac{\lambda_{3,4,\dots,2+m} (\gL_r^N)}{N}-(c_{2k^*}-c_0)\right| <\delta.  \label{eq:rgevals3tom} 
\ee
Set $\bar{\delta}=\min\{ \delta_0(\mu),\delta_0(c_{2k^*}-c_0)\}$ so that (\ref{eq:rgevals1to2}) and (\ref{eq:rgevals3tom}) hold for any $N\geq \bar{N}=\max\{ N_0(\mu,\delta,\gamma), N_0(c_{2k^*}-c_0,\delta,\gamma)\}$, with probability at least $1-2Ne^{-CN^{2\gamma}}$.

Additionally, Theorem~\ref{thm:Random} guarantees that the associated  eigenvectors for $\gL_r^N$ are $\O(\delta)$-close to the eigenvectors of the deterministic graph $\mathrm{L}_d$.  In turn, by Corollary~\ref{cor:CloseGraph} we have that the deterministic eigenvectors, by perhaps taking $\bar{N}$ larger, are $\O(\delta)$ close to the Fourier eigenfunctions of the graphon.  Consequently, with probability at least $1-2Ne^{-CN^{2\gamma}}$  the orthogonal eigenvectors of $\gL_r^N$ satisfy 
\be \begin{split} \left| \v_{1,2}-a_{1,2} \bomega_{k^*}-\overline{a_{1,2}} \bomega_{-k^*} \right|&< \delta \\
\left| \v_{j}-a_{j} \bomega_{2k^*}-\overline{a_{j}} \bomega_{-2k^*}-\sum_{n=1}^{\frac{m}{2}-1}\left( b_{jn} \bomega_{k_n}+\overline{b_{jn}}\bomega_{k_n}\right) \right|&< \delta, \end{split} \label{eq:v1v2}
\ee	
for all $j=3,4,\dots,2+m$, for some coefficients $a_k\in\mathbb{C}$, $b_{jn}\in\mathbb{C}$ and  where $\bomega_k$ are the normalized discrete Fourier basis vector 
\be 
	\bomega_k=\frac{1}{\sqrt{N}}\left( e^{2\pi k\mbi x_0}, e^{2\pi k \mbi x_1},\dots, e^{2\pi k \mbi x_{N-1}}\right)^T.
\ee}

By orthonormality of the eigenvectors $\v_k$ we also have that $ |a_1|^2=\frac{1}{2}+\O(\delta)$, or equivalently $a_1=\frac{1}{\sqrt{2}}e^{-2\pi\mbi \Omega}+\O(\delta)$ for some $\Omega\in \mathbb{R}$.  As a final preliminary, recall that zero is always an eigenvalue of the random graph Laplacian, $\gL_r^N$, and the associated eigenvector is constant.  We denote this eigenvalue/eigenvector pair as $(\lambda_N,\v_N)=(0,\bomega_0)$.  

Let us momentarily assume that $m=2$ so that $c_{2k^*}-c_0$ is also an isolated eigenvalue of $\mathcal{L}(W)$ of (minimal) algebraic multiplicity two.  We will return to the case of $m>2$ below.  For $m=2$, with {\color{black} with probability at least  $1-2Ne^{-CN^{2\gamma}}$} we have 
\be\label{eq:v3v4} 
	\begin{split}
		&\left| \v_3-a_3 \bomega_{2k^*}-\overline{a_3} \bomega_{-2k^*} \right|< \delta  \\
		&\left| \v_4-a_4 \bomega_{2k^*}-\overline{a_4} \bomega_{-2k^*} \right|< \delta. 
	\end{split}
\ee
Let $\v_5,\v_6,\dots,\v_{N-1}$ denote the remaining $N-5$ eigenvectors of $\gL_r^N$, which exist and are orthonormal since $\gL_r^N$ is symmetric.   This completes the set-up.  We now will assume that estimates (\ref{eq:rgevals1to2}),   (\ref{eq:rgevals3tom}) and (\ref{eq:v3v4}) are true to avoid repeating probabilistic statements throughout the proof.

We now perform a center manifold reduction.  To begin, expand the solution in its basis of eigenvectors as 
\be
	\u(t) = \sum_{k=1}^N w_k(t,\e)\v_k. 
\ee
This diagonalizes the linear part of \eqref{eq:main} and converts it into the system of equations
\be \label{eq:randomdiag} 
	\frac{dw_k}{dt}= l_k w_k+\e w_k +rQ_k(\w)+C_k(\w), 
\ee
where we can write $l_1=0$, $l_2=-\delta^2 N^2 \rho_2$, and $l_k=-N^2\rho_k$, with $\rho_k>0$ for all $k>1$ since $c_{k^*} - c_0$ is an isolated eigenvalue of $\mathcal{L}(W)$ with algebraic multiplicity $2$.  {\color{black} The function $Q_k:\mathbb{R}^N\to\mathbb{R}$ describes the quadratic terms in the equation for $w_k$ while $C_k:\mathbb{R}^N\to\mathbb{R}$ are cubic terms.  }

The quadratic terms in \eqref{eq:randomdiag} can be computed by projecting $\u(t)\circ \u(t)$ onto $\v_k$, or via the formula
\be
Q_k(\w)= \biggl \langle \v_k, \left(\sum_i w_i(t) \v_i \right) \circ \left(\sum_j w_j(t) \v_j\right)  \biggr \rangle =      \sum_{k_1,k_2}w_{k_1}w_{k_2}  \langle \v_k, \v_{k_1} \circ \v_{k_2} \rangle, \label{eq:Qform}
\ee 
where $\langle \u, \v\rangle$ is the Hermitian inner product and $\circ$ denotes the Hadamard product.  

Consider $k=1$. We may use \eqref{eq:v1v2} and \eqref{eq:v3v4} to find
\be 
	\sqrt{N} Q_1(\w)=r\beta w_1^2 +2r w_1w_N+ 2rq_{113}w_1w_3+2rq_{114}w_1w_4+2rq_{123}w_2w_3+2rq_{124}w_2w_4+\delta \tilde{Q}_1(\w),
\ee
where owing to the orthogonality of the discrete Fourier basis vectors we have 
\be \label{eq:beta}
	\beta=\sqrt{N} \langle \v_1, \v_1 \circ \v_1\rangle =\sqrt{N}\langle a_1 \bomega_{k^*}+\overline{a_1} \bomega_{-k^*}, a_1^2\bomega_{2k^*} +2 a_1\overline{a_1}\bomega_0+\overline{a_1}^2\bomega_{-2k^*} \rangle+ \O(\delta) =\O(\delta), 
\ee
and 
\be 
	q_{113}=a_1^2\overline{a_3}+\overline{a_1}^2a_3, \quad  q_{114}=a_1^2\overline{a_4}+\overline{a_1}^2a_4, \quad q_{123}=a_2^2\overline{a_3}+\overline{a_2}^2a_3, \quad q_{124}=a_2^2\overline{a_4}+\overline{a_2}^2a_4.
\ee
The cubic terms can be computed by the formula
\be 
	C_k(\w)= \biggl \langle \v_k, \left(\sum_i w_i(t) \v_i \right) \circ \left(\sum_j w_j(t) \v_j\right) \circ \left(\sum_l w_l(t) \v_l\right) \biggr \rangle =      \sum_{k_1,k_2,k_3}w_{k_1}w_{k_2}w_{k_3}  \langle \v_k, \v_{k_1} \circ \v_{k_2}\circ \v_{k_3} \rangle. 
\ee
So, we have
\be
	\begin{split}
	 	\langle \v_1, \v_{1} \circ \v_{1}\circ \v_{1} \rangle &=\frac{1}{N}\langle a_1\bomega_{k^*}+\overline{a_1}\bomega_{-k^*}, a_1^3 \bomega_{3k^*}+3a_1^2 \overline{a_1} \bomega_{k^*} +3a_1 \overline{a_1}^2 \bomega_{-k^*} +\overline{a_1}^3 \bomega_{-3k^*}\rangle +\O(\delta)  \\
	&=\frac{1}{N}\langle a_1\bomega_{k^*}+\overline{a_1}\bomega_{-k^*}, 3a_1^2 \overline{a_1} \bomega_{k^*} +3a_1 \overline{a_1}^2 \bomega_{-k^*} \rangle +\O(\delta).
	\end{split}
\ee
 Using $|a_1|^2=\frac{1}{2}+\O(\delta)$ therefore implies  
\be
	C_1(\w)=-\frac{3b}{2N}w_1^3+ \delta \tilde{C}_1(\w,\delta).
\ee
We now have obtained the required expansions for the equation that governs $w_1$ in (\ref{eq:randomdiag}).  Since $\ell_1=0$ and $\ell_k<0$ for all other $k$ we then have the existence of a one-dimensional center manifold.  {\color{black} The center manifold can be described locally as a graph over the center eigenspace, parameterized by $w_1$ and $\varepsilon$.  Let $w_k=H_k(w_1,\e)$ denote this graph for each component $k\neq 1$.   Our first objective is to compute expansions for the graphs $H_k(w_1,\e)$.  We will focus our attention on the terms $k=3,4$ and $N$ and compute the leading order quadratic terms in their expansions.  After that, we compute the reduced dynamics on the one-dimensional center manifold by substituting these expansions into the differential equation for $w_1$.  We will see that quadratic expansions for the graphs $H_k(w_1,\e)$ will be sufficient to obtain the reduced dynamics to cubic order and this will be sufficient to characterize the bifurcation that occurs at $\e=0$.  }   

The quadratic expansions for $H_k(w_k,\e)$ are computed as follows.   Consider first the case of $w_N$.  Expanding (\ref{eq:randomdiag}) and  using (\ref{eq:Qform})  we can compute the quadratic term in the expansion of $H_N(w_1,\e)$ by a normal form transformation where we find $\zeta$ for which $w_N=\zeta w_1^2$ satisfies (to leading order)
\be \frac{dw_N}{dt}=l_N w_N+\frac{r}{\sqrt{N}}w_1^2. \ee
Thus, $\zeta=-\frac{r}{\sqrt{N}l_N}+\O(\delta)$.  This procedure yields expansions for $H_3$, $H_4$ and $H_N$ which end up being,
\be
	\begin{split}
		H_3(w_1,\e)&= -\frac{r \left(a_3\overline{a_1}^2+\overline{a_3}a_1^2\right)}{\sqrt{N}l_3}w_1^2+\O(w_1^3+\e w_1^2) +\O(\delta) \\
		H_4(w_1,\e)&= -\frac{r \left(a_4\overline{a_1}^2+\overline{a_4}a_1^2\right)}{\sqrt{N}l_4}w_1^2+\O(w_1^3+\e w_1^2)+\O(\delta) \\
		H_N(w_1,\e)&=-\frac{r}{\sqrt{N}l_N} w_1^2 +\O(w_1^3+\e w_1^2)+\O(\delta).
	\end{split}
\ee
With these expansions, we may now compute the reduced dynamics in the center manifold. We find
\be \label{eq:reducedrandom1}  
	\frac{dw_1}{dt}=\e w_1+\frac{r\beta}{\sqrt{N}} w_1^2-\frac{1}{N}\left(\frac{3b}{2}+\frac{2r^2}{l_N}+\frac{2r^2q_{113}^2}{l_3}+\frac{2r^2q_{114}^2}{l_4}\right)w_1^3+\O(w_1^4+\e w_1^3+\delta w_1^3+\delta^2w_1^2).
\ee
The cubic terms can be simplified by noting that orthogonality of the eigenvectors $\v_k$ implies that $a_k\overline{a}_j+ a_j\overline{a}_k= \O(\delta)$ for all $k\neq j$ while normality implies $a_k\overline{a}_k=\frac{1}{2}+\O(\delta)$.  Taken all together these facts imply that $a_3^2+a_4^2=\O(\delta)$ and hence
\be 
	q_{113}^2+q_{114}^2=a_1^4\left(\overline{a}_3^2+\overline{a}_4^2\right) +2|a_1|^4 \left(|a_3|^2+|a_4|^2\right) +\overline{a}_1^4\left(a_3^2+a_4^2\right) =\frac{1}{2}+\O(\delta).
\ee
Finally, since $|l_3-l_4|=\O(\delta)$ we have then reduced \eqref{eq:reducedrandom1} to 
\be \label{eq:reducedrandom}
	\frac{dw_1}{dt}=\e w_1+\frac{r \beta}{\sqrt{N}} w_1^2-\frac{1}{N}\left(\frac{3b}{2}+\frac{2r^2}{l_N}+\frac{r^2}{l_3} \right)w_1^3+\O(w_1^4+\e w_1^3+\delta w_1^3+\delta^2w_1^2), 
\ee
which constitutes the leading order expansion of the center manifold of \eqref{eq:main} near $(u,\e) = (0,0)$. 

To obtain steady-state solutions on the center manifold, begin by letting $w_1=\sqrt{N}z_1$. Then \eqref{eq:reducedrandom} is transformed to
\be\label{eq:reducedrandomz2} 
	\frac{dz_1}{dt}=\e z_1+r \beta z_1^2-\left(\frac{3b}{2}+\frac{2r^2}{l_N}+\frac{r^2}{l_3} \right)z_1^3+\O(z_1^4+\e z_1^3+\delta z_1^3),  
\ee
where we recall from \eqref{eq:beta} that $\beta = \mathcal{O}(\delta)$. Applying the implicit function theorem we find that for $\e$ and $\delta$ sufficiently small there exists a pair of equilibrium points
\be  \label{eq:z1def}
	z_1^\pm (\e,\delta)= \frac{\beta r}{2\Gamma_r}\pm \frac{1}{2\Gamma_r}\sqrt{\beta^2r^2 +4\Gamma_r \e}+\O(\e,\delta^2),  
\ee 
where 
\be
	\Gamma_r=\frac{3b}{2}+\frac{2r^2}{l_N}+\frac{r^2}{l_3}
\ee
is the cubic coefficient on the center manifold.  For any $\delta\neq 0$ the bifurcation occuring near the zero state is transcritical, while if $\delta$ is close to zero it is accompanied by a saddle node bifurcation occurring at 
\be 
	\e_{SN}\approx- \frac{\beta^2 r^2}{4\Gamma_r}. 
\ee
The sign of $\Gamma_r$ determines whether the equilibrium $z_1^\pm(\e,\delta)$ exist for $\e>\e_{SN}$ ($\Gamma_r>0$) or for $\e<\e_{SN}$ ($\Gamma_r<0$).  We now revert to the original coordinates of (\ref{eq:main}).

{\color{black} Reverting coordinates back to those of (\ref{eq:main}),  we have the existence of a bifurcating solution of the form
\be 
	u_j=\sqrt{2} z^*(\e,\delta) \cos\left(2\pi k^* \left(\frac{j-1}{N}+\Omega\right)\right)+\O(\e,\delta^2),  \label{eq:bifsol}
\ee
for  $\Omega$ defined above where $z^*(\e,\delta)=z_1^-(\e,\delta)$ if $\beta r>0$ and $z^*(\e,\delta)=z_1^+(\e,\delta)$ otherwise.   }

The preceding analysis assumed that $c_{2k^*}-c_0$ is an isolated eigenvalue of $\mathcal{L}(W)$ of minimal multiplicity two.  We now consider the situation where this eigenvalue remains isolated but has multiplicity $m>2$ so that the expansions (\ref{eq:v1v2}) hold, i.e  
\be 
	\v_j=a_j \bomega_{2k^*}+\overline{a_j} \bomega_{-2k^*} +\sum_{n=1}^{\frac{m}{2}-1}\left( b_{jn} \bomega_{k_n}+\overline{b_{jn}}\bomega_{k_n}\right)  +\delta p_j, \quad 3\leq j\leq 3+m,  
\ee
for some nonzero $k_n\neq k^*$ {\color{black} and again this holds with probability at least $1-2Ne^{-CN^{2\gamma}}$}.  Orthonormality of the eigenvectors implies that 
\be\label{eq:aident}  
	\sum_{j=3}^{3+m} a_j\overline{a_j}=1+\O(\delta) , \quad  \sum_{j=3}^{3+m}a_j^2=\O(\delta),
\ee
and so we can generalize \eqref{eq:randomdiag} to this case and obtain
\be 
	\sqrt{N} Q_1(\w)=r\beta w_1^2 +2r w_1w_N +2r\sum_{j=3}^{3+m} \left(q_{11j}w_1w_j+q_{12j}w_2w_j \right)+\delta \tilde{Q}_1(\w), 
\ee
where $q_{11j}=a_j \overline{a_1}^2+\overline{a_j}a_1^2$.  Repeating the analysis above, we find expansions for the graph defining the center manifold as 
\be
	\begin{split}
		H_j(w_1,\e) &= -\frac{r q_{11j}}{\sqrt{N} l_j}w_1^2 +\O(w_1^3+\e w_1^2), \quad 3\leq j\leq 3+m \\
		H_N(w_1,\e) &= -\frac{r}{\sqrt{N} l_N}w_1^2 +\O(w_1^3+\e w_1^2). 
	\end{split}
\ee
We then compute the reduced equation on the center manifold to obtain, in analogy with \eqref{eq:reducedrandom1}, 
\be \label{eq:genfirstreduced}
	\frac{dw_1}{dt}=\e w_1+\frac{r\beta}{\sqrt{N}} w_1^2-\frac{1}{N}\left(\frac{3b}{2}+\frac{2r^2}{l_N}+\sum_{j=3}^{3+m} \frac{2r^2q_{11j}^2}{l_j}\right)w_1^3 +\O(w_1^4+\e w_1^3+\delta w_1^3+\delta^2w_1^2).  
\ee
Employing (\ref{eq:aident}) this reduces to 
\[ \frac{dw_1}{dt}=\e w_1+\frac{r \beta}{\sqrt{N}} w_1^2-\frac{1}{N}\left(\frac{3b}{2}+\frac{2r^2}{l_N}+\frac{r^2}{l_3} \right)w_1^3+\O(w_1^4+\e w_1^3+\delta w_1^3+\delta^2w_1^2).  \]
which has the identical form to the previous case seen in \eqref{eq:reducedrandom}. This completes the proof.
\end{proof} 

\begin{rmk} A direct link between the bifurcation equation obtained for the random graph in \eqref{eq:reducedrandomz2} and that of the non-local graphon model \eqref{eq:scalarz} could be obtained by an $N$-dependent rescaling of coefficients.  If we re-scale parameters by $\e=\tilde{\e} N^2$, $r=\tilde{r}N^2$,  $b=\tilde{b}N^2$, re-scale $z_1=\sqrt{2}\tilde{z}_1$ and re-scale the dependent variable by $t=\tau N^{-2}$ then \eqref{eq:reducedrandomz2} is transformed to 
\be \label{eq:reducedrandomz3}
	\frac{d\tilde{z}_1}{d\tau}=\tilde{\e} \tilde{z}_1+\sqrt{2}\tilde{r} \beta \tilde{z}_1^2-\left(3\tilde{b}+\frac{4\tilde{r}^2}{\ell_{0}}+\frac{2\tilde{r}^2}{\ell_{2k^*}} \right)\tilde{z}_1^3+\O(\tilde{z}_1^4+\e \tilde{z}_1^3+\delta \tilde{z}_1^3).  
\ee
\end{rmk}

We now present the analogous result to Theorem~\ref{thm:mainrandom} for the case when the graphon Laplacian has bifurcating modes that are in $2:1$ resonance.  We show that these bifurcations manifest as transcritical bifurcations in \eqref{eq:main} which stem from resonant graphon eigenvalues.

\begin{theorem}\label{thm:randomresonance} 
{\color{black} Let  $W(x,y)=R(|x-y|)$ be an almost everywhere continuous ring graphon which can be represented as $W(x,y)=\sum_k c_k e^{2\pi \mbi k (x-y)}$.  Assume that there exists a $k^*$ such that $c_{k^*}=c_{2k_*}$ and $\mu:=c_{k^*}-c_0$ is an isolated eigenvalue of $\L$ with minimal algebraic multiplicity of four. Then, there exists $\bar{\delta}>0$ such that for all $0<\delta<\bar{\delta}$ and any  $\gamma\in\left(0,\frac{1}{2}\right)$ there exists a $\bar{N}(\delta,\gamma)$, a $C(\gamma)>0$ 
such that for any $N>\bar{N}$ the following is true with probability exceeding $1-2Ne^{-CN^{2\gamma}}$:
\begin{enumerate}
\item There exists an eigenvalue $\lambda_1(\gL_r^N)$ satisfying 
\be \left|\frac{\lambda_{1} (\gL_r^N)}{N}-(c_{k^*}-c_0)\right| <\delta. \ee
\item If $\kappa=\lambda_1$ then for $|\e|$ sufficiently small, there exists $\Omega_{1,2} \in \mathbb{R}$ and $\eta_{1,2}>0$ such that \eqref{eq:main} has a nontrivial equilibrium solution which may be expanded as 
\be 
	u_j=\e \eta_1 \cos\left(2\pi k^* \left(\frac{j-1}{N}-\Omega_1\right)\right)+\e\eta_2 \cos\left(4\pi k^* \left(\frac{j-1}{N}-\Omega_2\right)\right)+\O(\e^2,\delta^2).
\ee
\end{enumerate} }
\end{theorem}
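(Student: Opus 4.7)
The strategy is to mirror the proof of Theorem~\ref{thm:mainrandom} while using Theorem~\ref{thm:graphonresonance} as the graphon-level model for the bifurcation. First I would apply Theorem~\ref{thm:Random} at the isolated eigenvalue $\mu = c_{k^*} - c_0 = c_{2k^*} - c_0$ of multiplicity four. With probability at least $1 - 2N\mathrm{e}^{-CN^{2\gamma}}$ this yields four eigenvalues $\lambda_1 \geq \lambda_2 \geq \lambda_3 \geq \lambda_4$ of $\gL_r^N$ with $|\lambda_j/N - \mu| < \delta$, the remaining spectrum separated from $N\mu$ by $\O(N)$, and orthonormal eigenvectors $\v_1,\dots,\v_4$ each $\delta$-close to the four-dimensional span of the discrete Fourier vectors $\bomega_{\pm k^*}$ and $\bomega_{\pm 2k^*}$. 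Orthonormality then forces the Fourier expansion coefficients of $\v_1,\dots,\v_4$ to satisfy identities analogous to \eqref{eq:aident}.

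Setting $\kappa = \lambda_1$ produces $l_1 = 0$, $l_j = -(\lambda_j - \lambda_1)^2 = \O(\delta^2 N^2)$ for $j = 2,3,4$, and $l_k = -\O(N^2)$ for $k \geq 5$. Since there is a spectral gap of order $N^2$ between the four near-zero $l_j$'s and the remainder, I would construct a four-dimensional slow invariant manifold parameterized by $(w_1,w_2,w_3,w_4,\e)$, even though only $l_1$ is exactly zero. Expanding $\u = \sum_k w_k \v_k$, projecting the quadratic nonlinearity onto each $\v_j$ for $j=1,\dots,4$, and using the orthogonality of the discrete Fourier vectors, all quadratic interactions between pairs of slow modes vanish at leading order in $\delta$ except the $2{:}1$ resonant couplings $r w_{\pm k^*}^2$ feeding the $w_{\pm 2k^*}$ equations and $2r w_{\pm 2k^*} w_{\mp k^*}$ feeding the $w_{\pm k^*}$ equations. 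After the rescaling $w_k = \sqrt{N} z_k$, the reduced system on the slow manifold coincides with the graphon system \eqref{eq:case2CM} up to $\O(\delta)$ quadratic corrections and $\O(\delta^2)$ linear detuning from $l_2,l_3,l_4$.

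To conclude, I would import the equilibrium analysis carried out in the proof of Theorem~\ref{thm:graphonresonance}. For each fixed $\e>0$ that analysis produces a one-parameter family of equilibria of size $\O(\e)$ for the unperturbed system \eqref{eq:case2CM}, parameterized by the graphon's rotational symmetry. The random realization breaks this symmetry, so applying the implicit function theorem to the amplitude equations transverse to the broken phase direction selects isolated equilibria whose amplitudes are $\O(\delta)$ perturbations of the graphon values, with two selected phases $\Omega_1,\Omega_2 \in \mathbb{R}$. Reverting the coordinate change $\u = \sum w_k \v_k$ and substituting the Fourier expansions of $\v_1,\dots,\v_4$ yields the claimed expansion. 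The main obstacle is that the four critical modes are only approximately degenerate, so the reduction is a slow manifold rather than a true center manifold and the one-parameter family of graphon equilibria must be tracked carefully under $\O(\delta)$ perturbations. In particular, verifying that the Jacobian of the amplitude equations restricted to a cross-section transverse to the rotational symmetry is invertible when $\e r \neq 0$, so that the implicit function theorem closes, is the key technical point where the nondegeneracy of the resonant quadratic coupling is used.
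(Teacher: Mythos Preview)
Your approach differs from the paper's in a structurally important way. You propose a \emph{four-dimensional slow manifold} reduction tracking all four near-critical modes $w_1,\dots,w_4$, then perturbing off the graphon system \eqref{eq:case2CM}. The paper instead performs a \emph{one-dimensional} center manifold reduction exactly as in Theorem~\ref{thm:mainrandom}: with $\kappa=\lambda_1$ only $l_1=0$ is truly critical, and the single critical eigenvector $\v_1$ is already, by Theorem~\ref{thm:Random}, a mixture
\[
\v_1 \approx a_1\bomega_{k^*}+\overline{a_1}\bomega_{-k^*}+a_2\bomega_{2k^*}+\overline{a_2}\bomega_{-2k^*}
\]
of \emph{both} resonant Fourier modes, since these span the same graphon eigenspace. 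The whole point is then a one-line computation: the quadratic self-interaction coefficient $\beta=\sqrt{N}\langle \v_1,\v_1\circ\v_1\rangle = 3a_1^2\overline{a_2}+3\overline{a_1}^2 a_2+\O(\delta)$ is $\O(1)$ in $\delta$ (not $\O(\delta)$ as in the non-resonant case), so the scalar reduced equation $\dot w_1=\e w_1+\tfrac{r\beta}{\sqrt N}w_1^2+\O(3)$ is transcritical with an equilibrium of size $\O(\e)$. Reverting coordinates through $\v_1$ immediately produces the two-mode expansion with phases $\Omega_{1,2}$ and amplitudes $\eta_{1,2}$ read off from $a_1,a_2$.

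Your route is not wrong in spirit, but it carries an extra burden you have not fully discharged. The detuning eigenvalues $l_2,l_3,l_4=\O(\delta^2 N^2)$ are \emph{not} small relative to $\e$ unless you couple the $\e$ and $\delta$ scales, so the four-dimensional reduced system is not an $\O(\delta)$ perturbation of \eqref{eq:case2CM} in the regime the theorem addresses; the modes $w_2,w_3,w_4$ are genuinely stable and should be slaved to $w_1$ via a further center manifold, which collapses your construction back to the paper's one-dimensional reduction. The implicit function theorem step you describe for phase selection is then unnecessary: the phases are fixed by $a_1,a_2$, which are properties of the single eigenvector $\v_1$ of the realized random graph, not free parameters to be selected by symmetry breaking. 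In short, the paper exploits that the resonance is already encoded in $\v_1$, which makes the argument a near-verbatim repeat of Theorem~\ref{thm:mainrandom} with one coefficient recomputed.
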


\begin{proof}
The proof proceeds similarly to that of Theorem~\ref{thm:mainrandom} and so we only highlight the differences here. Suppose the same-set up as in Theorem~\ref{thm:mainrandom}, except we assume that the graphon eigenvalue has multiplicity four and the bifurcating modes are $e^{2\pi\mbi k^* x}$ and $e^{4\pi\mbi k^*x}$.  From Theorem~\ref{thm:Random} we have that for any $\delta > 0$ taken sufficiently small {\color{black} and with probability at least $1-2Ne^{-CN^{2\gamma}}$}  there exists an eigenvalue, $\lambda_1$, of $\gL_r^N$  such that
\be 
	\left|\frac{\lambda_{1} (\mathrm{L}_r)}{N}-(c_{k^*}-c_0)\right| <\delta. 
\ee	
{\color{black} Moreover, with probability at least  $1-2Ne^{-CN^{2\gamma}}$ the corresponding eigenvector satisfies,
\be 
	\left| \v_1-a_1 \bomega_{k^*}-\overline{a_1} \bomega_{-k^*}  -a_2 \bomega_{2k^*}-\overline{a_2} \bomega_{-2k^*} \right|<\delta, 
\ee
for some coefficients $a_{1,2}\in\mathbb{C}$.} We can then follow the center manifold reduction in Theorem~\ref{thm:mainrandom}, but now we note that  the quadratic self interaction term is non-zero since
\be
	\begin{split} 
		\sqrt{N} \v_1\circ \v_1&=a_2^2 \bomega_{4k^*}+2a_1a_2 \bomega_{3k^*}+a_1^2\bomega_{2k^*}+2a_2\overline{a_1} \bomega_{k^*} +\left(2a_1\overline{a_1}+2a_2\overline{a_2}\right)\bomega_0+2a_1\overline{a_2} \bomega_{-k^*}  \\
		&\quad \quad +\overline{a_1}^2 \bomega_{-2k^*}+2\overline{a_1}\overline{a_2} \bomega_{-3k^*}+\overline{a_2}^2 \bomega_{-4k^*}+\O(\delta),
	\end{split}
\ee
giving that 
\be 
	\sqrt{N} \langle \v_1, \v_1 \circ \v_1\rangle =3a_1^2\overline{a_2}+3a_2 \overline{a_1}^2+\O(\delta). 
\ee
In this case the reduced flow on the center manifold is then described by the equation
\be 
	\frac{dw_1}{dt}=\e w_1+\frac{r\beta}{\sqrt{N}}w_1^2 +\O(3), 
\ee
where $\beta=3a_1^2\overline{a_2}+3a_2 \overline{a_1}^2+\O(\delta)$. If $a_1\neq 0$ and $a_2\neq 0$ then the quadratic coefficient is non-zero and $\O(1)$ in $\delta$. The remainder of the proof is now straightforward and follows as in the previous result.
\end{proof}


\section{Examples and Numerical Simulations}\label{sec:numerics}

We conclude with a numerical investigation of (\ref{eq:main}) to illustrate our main results and to demonstrate how different the phenomena may be in the case where the hypothesis of our main result are not satisfied.


\subsection{Example: small-world network} 

Consider the small-world graphon $W(x,y)$. Expanding the graphon as a Fourier Series
\be
	W(x,y)=\sum_{k\in \mathbb{Z}} c_k e^{2\pi\mbi k(x-y) }, 
\ee
we compute that 
\be 
	c_k= \bigg(\frac{p-q}{\pi k}\bigg) \sin (2\pi k \alpha), \quad c_0=2\alpha p+(1-2\alpha) q.
\ee
Hence, the graphon Laplacian eigenvalues are given by 
\be \label{eq:WSevals}
	\lambda_k=c_k-c_0= \bigg(\frac{p-q}{\pi k}\bigg) \sin (2\pi k \alpha)- 2\alpha p-(1-2\alpha) q. 
\ee

\begin{figure}
    \centering
     \subfigure{\includegraphics[width=0.43\textwidth]{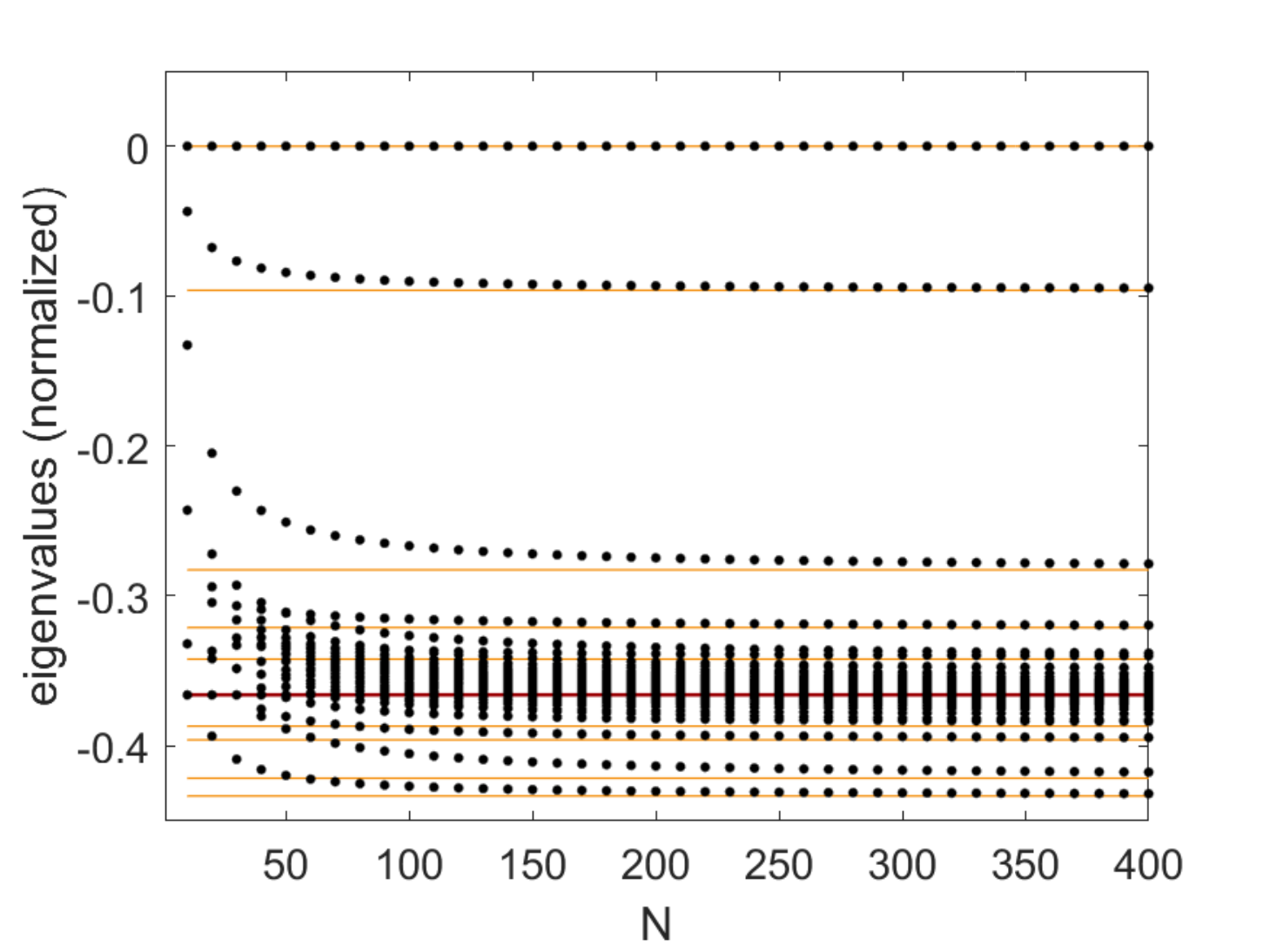}}
 \subfigure{\includegraphics[width=0.43\textwidth]{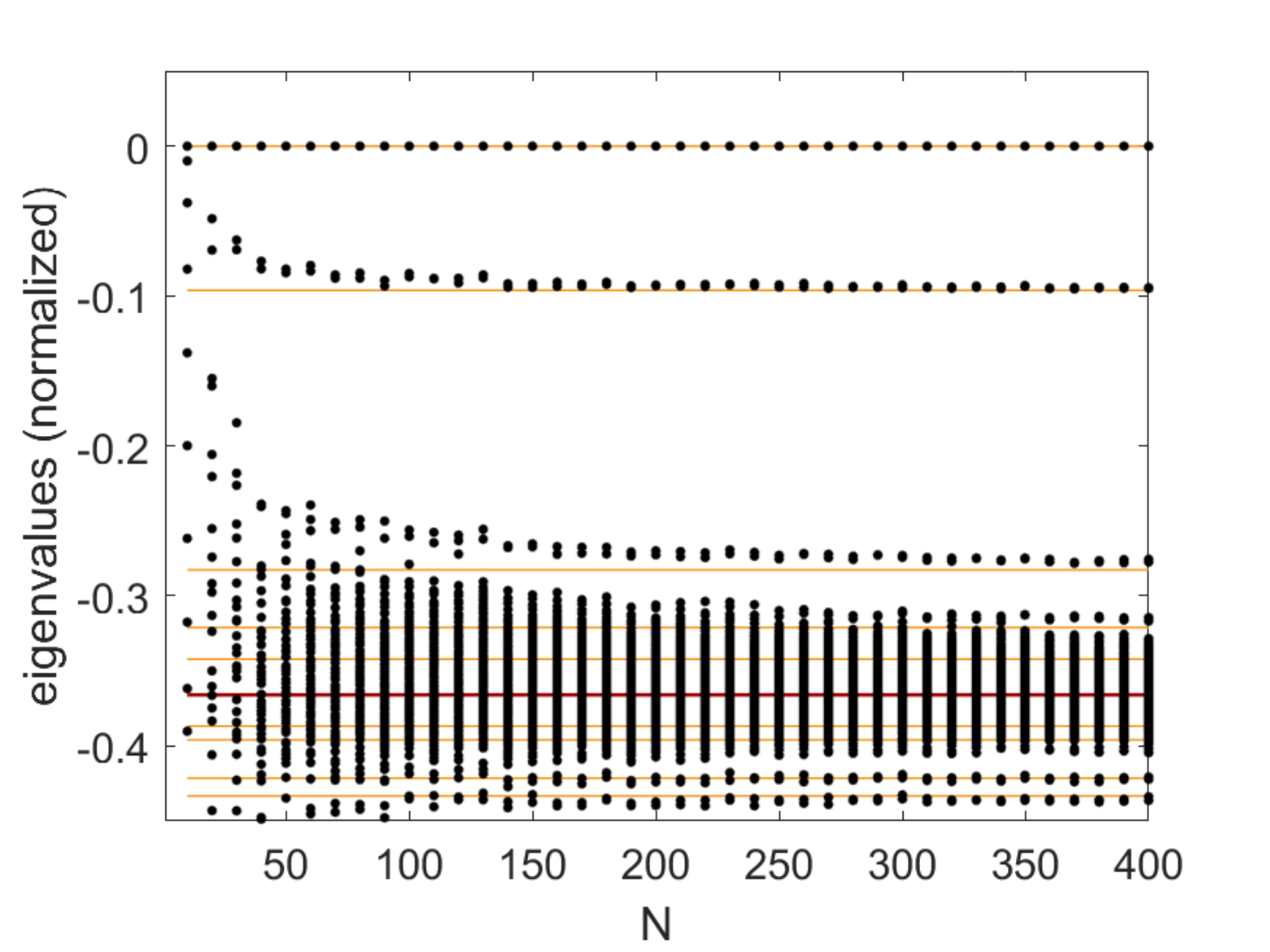}}
   \caption{Convergence of the (normalized) eigenvalues of the deterministic (left) and random (right) graph Laplacians for a small-world network with parameters $(p,q,\alpha) = (0.90,0.01,0.20)$.  In both cases we show convergence of the eigenvalues for the graph Laplacians as the number of nodes is increased from $N=10$ to $N=400$. In both figures the horizontal lines are the eigenvalues of the graphon defined in \eqref{eq:WSevals} for $0\leq k\leq 10$. The darker line represents the accumulation point of the spectrum at $\lambda_0=c_0 = -0.366$.}
    \label{fig:evalconv}
\end{figure}

In Figures~\ref{fig:evalconv}-\ref{fig:bifbad} we present our numerical investigations of \eqref{eq:main} for small-world graphs with parameters $(p,q,\alpha) = (0.90,0.01,0.20)$ and system parameters $(r,b) = (1,1)$. First, in Figure~\ref{fig:evalconv} we provide the eigenvalues of $\mathrm{L}_d^N$ and realizations of $\mathrm{L}_r^N$ for graphs of size $N=10$ to $N=400$. Vertical lines represent the eigenvalues $\lambda_k$ of the ring graphon Laplacian used to generate $\gL_d^N$ and $\gL_r^N$. In both cases we see pairs of eigenvalues converging (after rescaling by $N$) to the eigenvalues of the graphon Laplacian, confirming the analysis in Theorem~\ref{thm:Random}. Note that the eigenvalue near $-0.1$ of the graphon Laplacian is isolated and when $N = 400$ the corresponding pair of eigenvalues of $\mathrm{L}_r^N$ are close to this isolated eigenvalue, meaning that Theorem~\ref{thm:mainrandom} can be applied here. So, in Figure~\ref{fig:bifgood} we take a realization of the random graph with $N = 400$ and numerically compute the pattern-forming Turing bifurcation curve in a neighborhood of $(u,\e)=(0,0)$ using numerical continuation.  As predicted by our analysis, the bifurcation is weakly transcritical and resembles the super-critical pitchfork bifurcation predicted by the graphon analysis.  We also plot the solution computed along this branch and observe that it closely resembles a translate of the bifurcating Fourier mode predicted by the graphon. To contrast this, we perform the same calculation for the fiftieth largest eigenvalue in Figure~\ref{fig:bifbad} using the same realization of the random graph. Here this eigenvalue is close to the accumulation point at $c_0 = -0.3666$ (after normalizing by $N = 400$) and is not sufficiently isolated to apply our analytical result.  This is consistent with the numerical simulations, where the bifurcation is seen to be strongly transcritical and the bifurcating solution lacks any clear spatial structure.  

\begin{figure}
    \centering
     \subfigure{\includegraphics[width=0.43\textwidth]{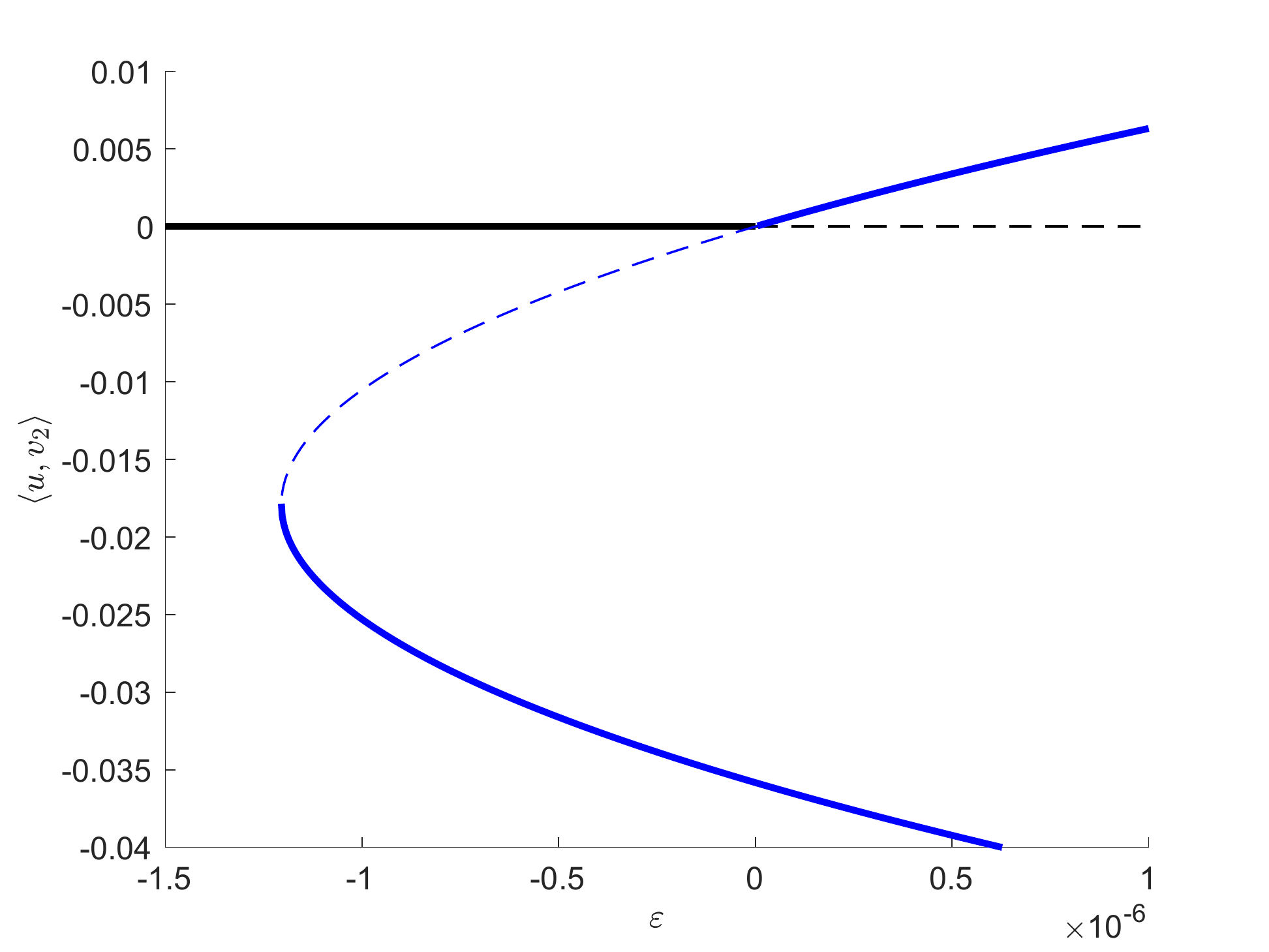}}
 \subfigure{\includegraphics[width=0.43\textwidth]{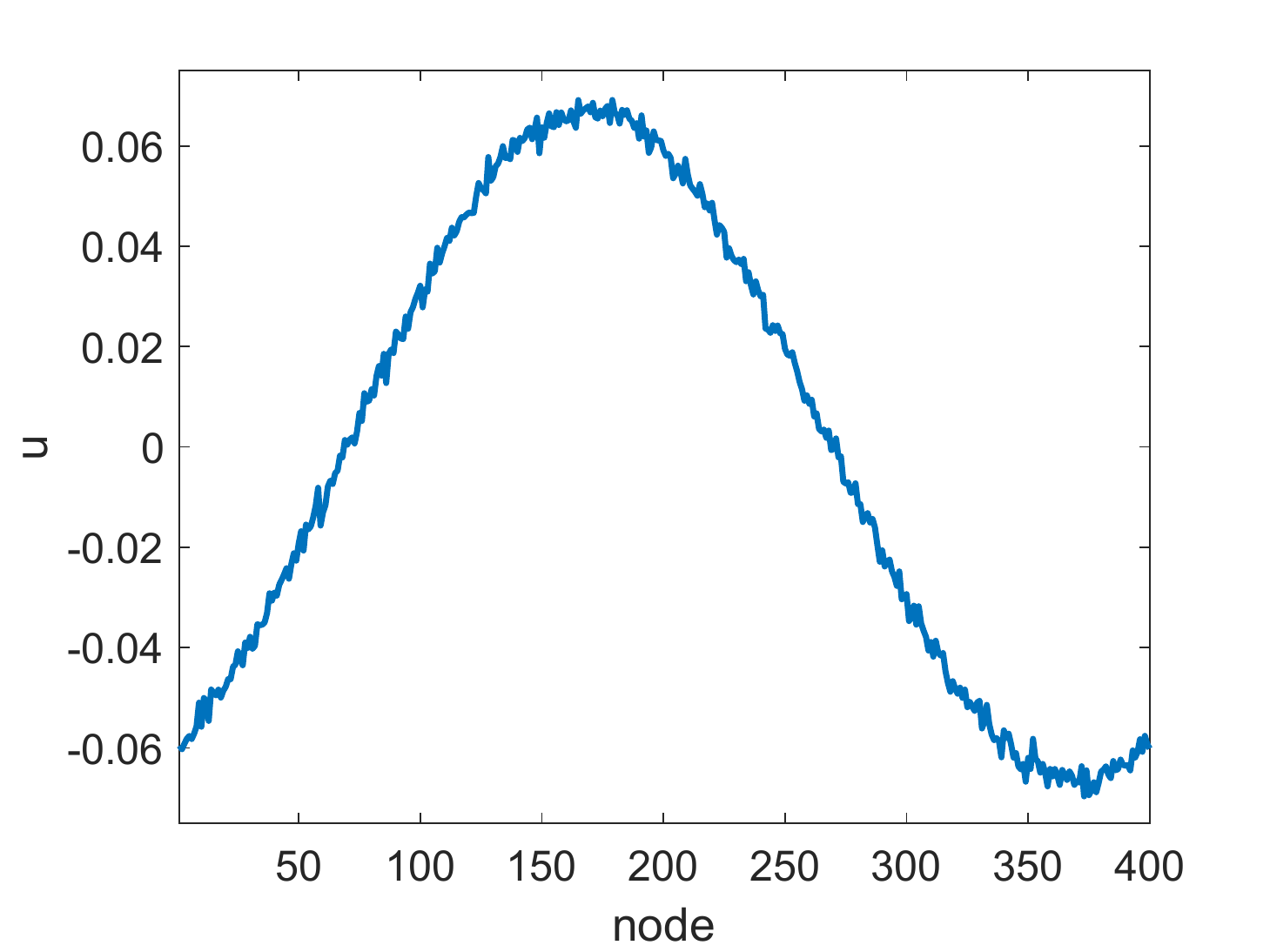}}
   \caption{On the left is the bifurcation diagram near $(u,\e) = (0,0)$ for the discrete Swift--Hohenberg equation with $(\kappa,r,b) = (-37.7255,1,1)$ on a random small-world network of size $N=400$ with parameters $(p,q,\alpha) = (0.90,0.01,0.20)$. Stable solutions are denoted by a solid line, while those that are unstable are given by a dashed line. Here $\kappa$ corresponds to the largest (non-zero) eigenvalue of $\mathrm{L}_r^N$.  On the right is the bifurcating solution with $\e=0.0029$ which closely resembles a translate of the Fourier mode $\cos\left(\frac{2\pi n}{400}\right)$.  We note that the numerically computed quadratic interaction coefficient in this example is $\v^T \v\circ \v=0.000515$, where $\v$ is the eigenvector corresponding to the critical eigenvalue.}
    \label{fig:bifgood}
\end{figure}

\begin{figure}
    \centering
     \subfigure{\includegraphics[width=0.43\textwidth]{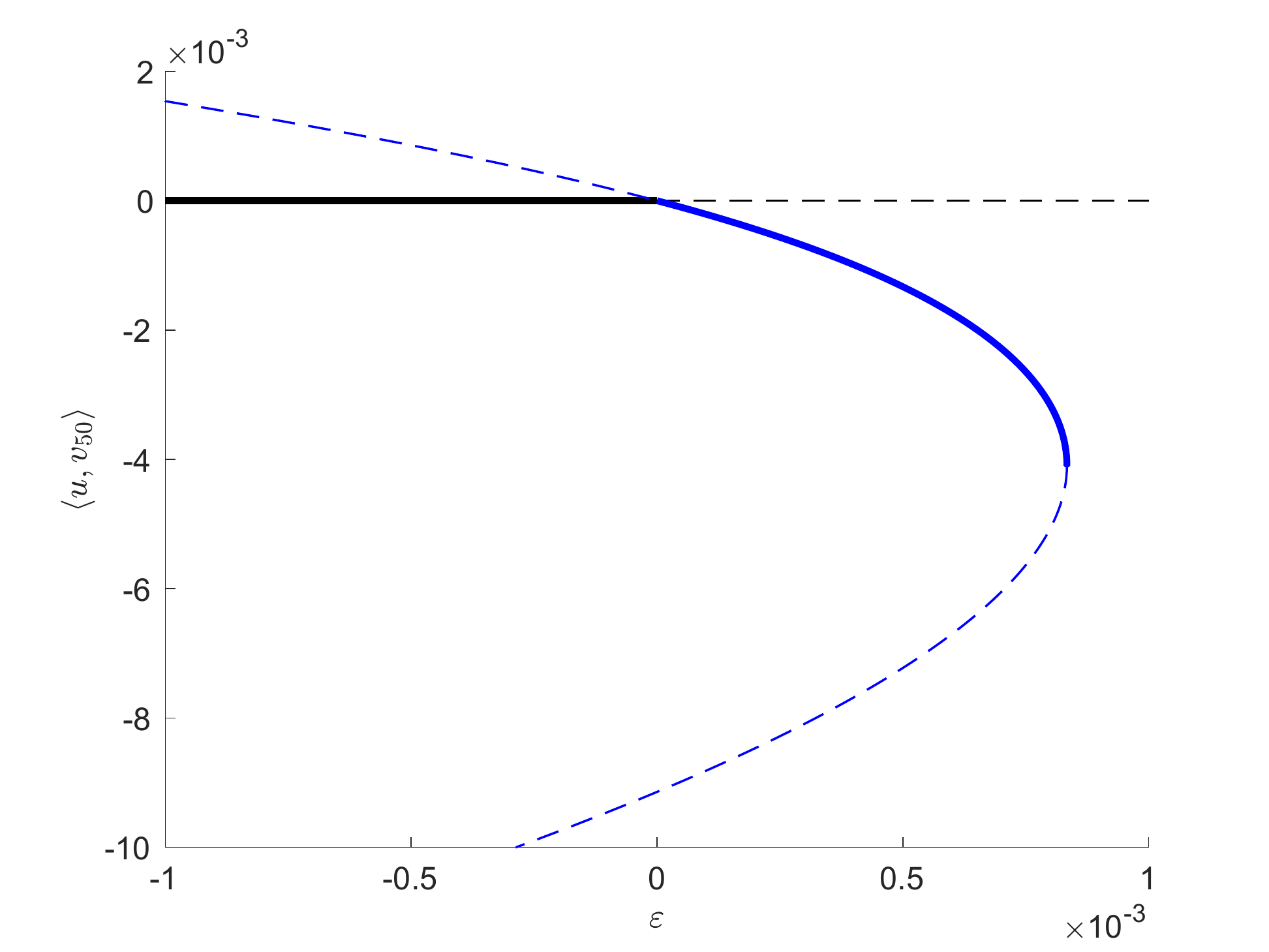}}
 \subfigure{\includegraphics[width=0.43\textwidth]{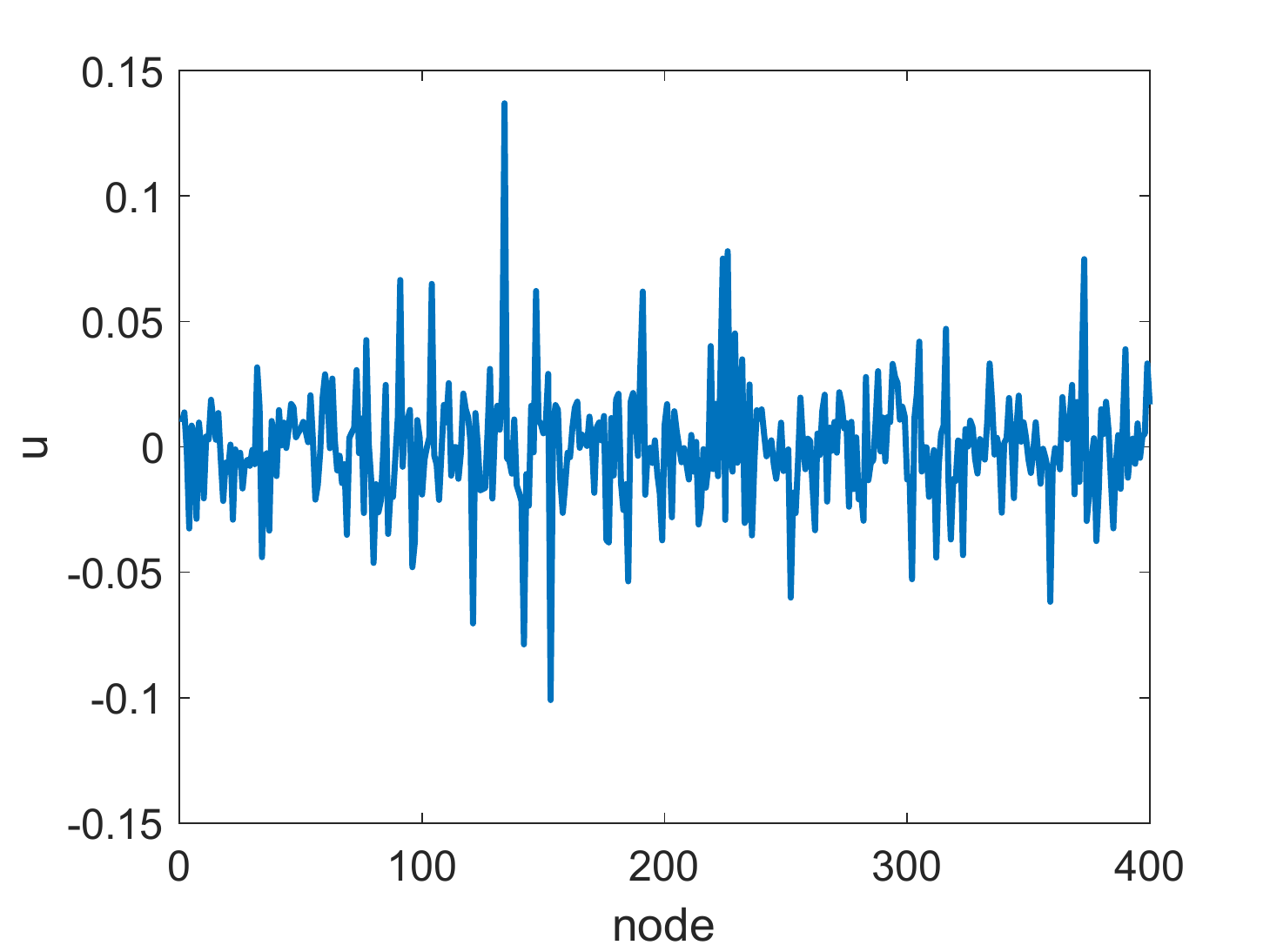}}
   \caption{On the left is the bifurcation diagram near $(u,\e) = (0,0)$ for the discrete Swift--Hohenberg equation with $(\kappa,r,b) = (-137.2908,1,1)$ on the same small-world network as in Figure~\ref{fig:bifgood}. Here we take $\kappa$ to be the fiftieth largest eigenvalue of $\mathrm{L}_r^N$, which is not sufficiently isolated for the results of Section~\ref{sec:Turinggraph} to apply.   Contrasting the axis scales in this figure with those of Figure~\ref{fig:bifgood}, we see that the bifurcation is strongly transcritical in this case as compared to the example provided in Figure~\ref{fig:bifgood}. The numerically computed quadratic interaction coefficient here is $\v^T \v\circ \v=-0.0157$, where $\v$ is the eigenvector corresponding to the critical eigenvalue. This is a thirty fold increase over the coefficient obtained from the isolated principal eigenvalue. On the right we provide the bifurcating solution for $\e=-0.0012$ where we observe a lack of spatial structure.}
    \label{fig:bifbad}
\end{figure}


\subsection{Example: a graph with $2:1$ resonance}

To illustrate the results of Theorem~\ref{thm:randomresonance}, we consider the toy graphon model
\be
	W(x,y) = \frac{1}{2} + \frac{1}{4}\cos(2\pi(x - y)) + \frac{1}{4}\cos(4\pi(x-y)). \label{eq:Wres} 
\ee
The above graphon has $c_0 = \frac 1 2$, $c_1 = c_2 = \frac{1}{8}$, and all other $c_k = 0$ in its Fourier expansion, and so the corresponding graphon Laplacian has eigenvalues given by $0, - \frac 3 8, -\frac 1 2$ . Moreover, the eigenvalue $-\frac 3 8$ is isolated and comes in a $2:1$ resonance since $c_1 = c_2$.  For realizations of the random graph with $N=400$ nodes we find, as expected, that there are four eigenvalues near $-\frac{3N}{8} = -150$.  The results of Theorem~\ref{thm:randomresonance} shows that with high probability this configuration leads to a transcritical bifurcation. We further confirm these results with Figure~\ref{fig:resonance}, which shows a transcritical bifurcation with the bifurcating solution approximately a linear superposition of the involved Fourier modes.

\begin{figure} 
    \centering
     \subfigure{\includegraphics[width=0.43\textwidth]{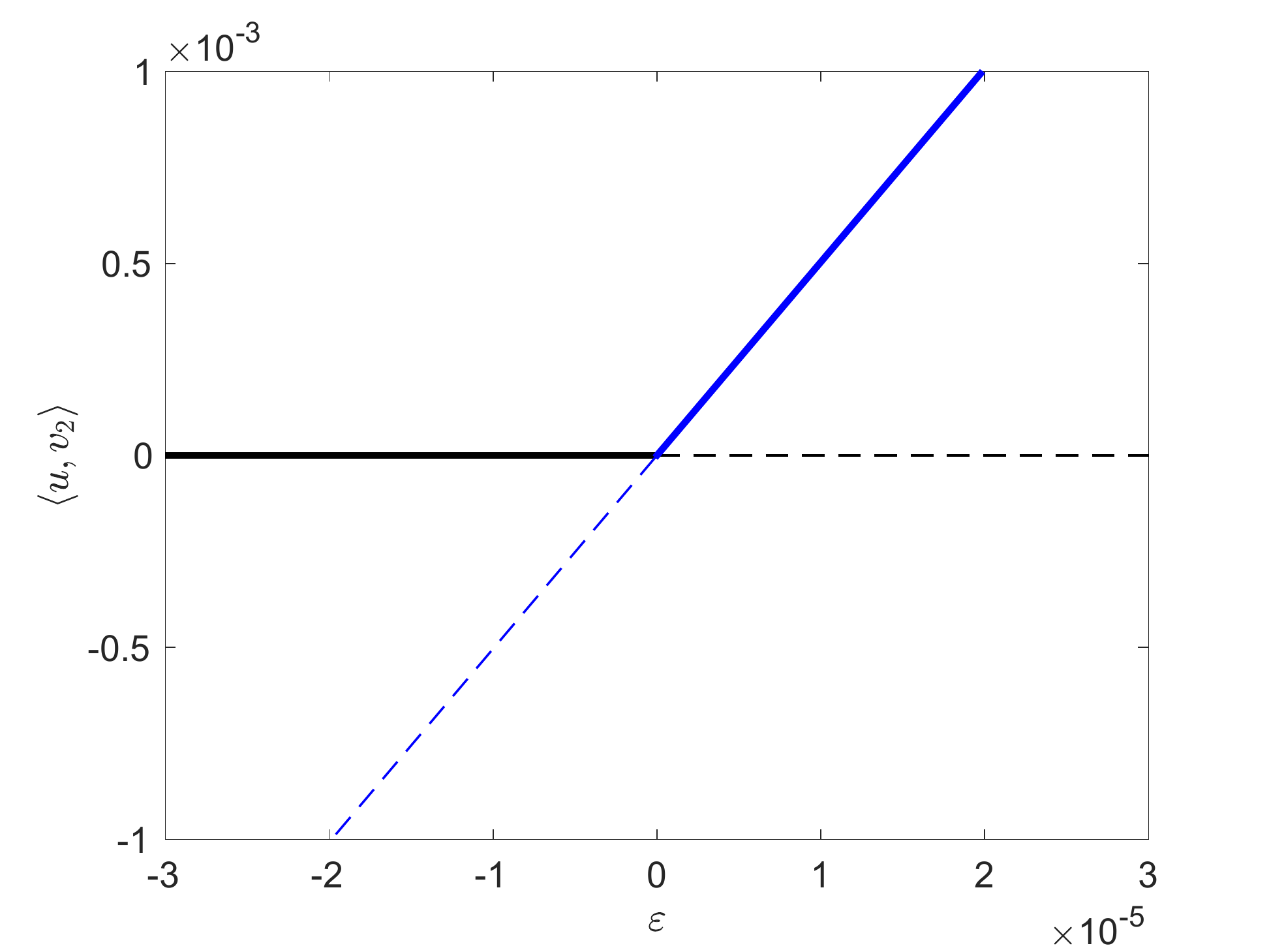}}
 \subfigure{\includegraphics[width=0.43\textwidth]{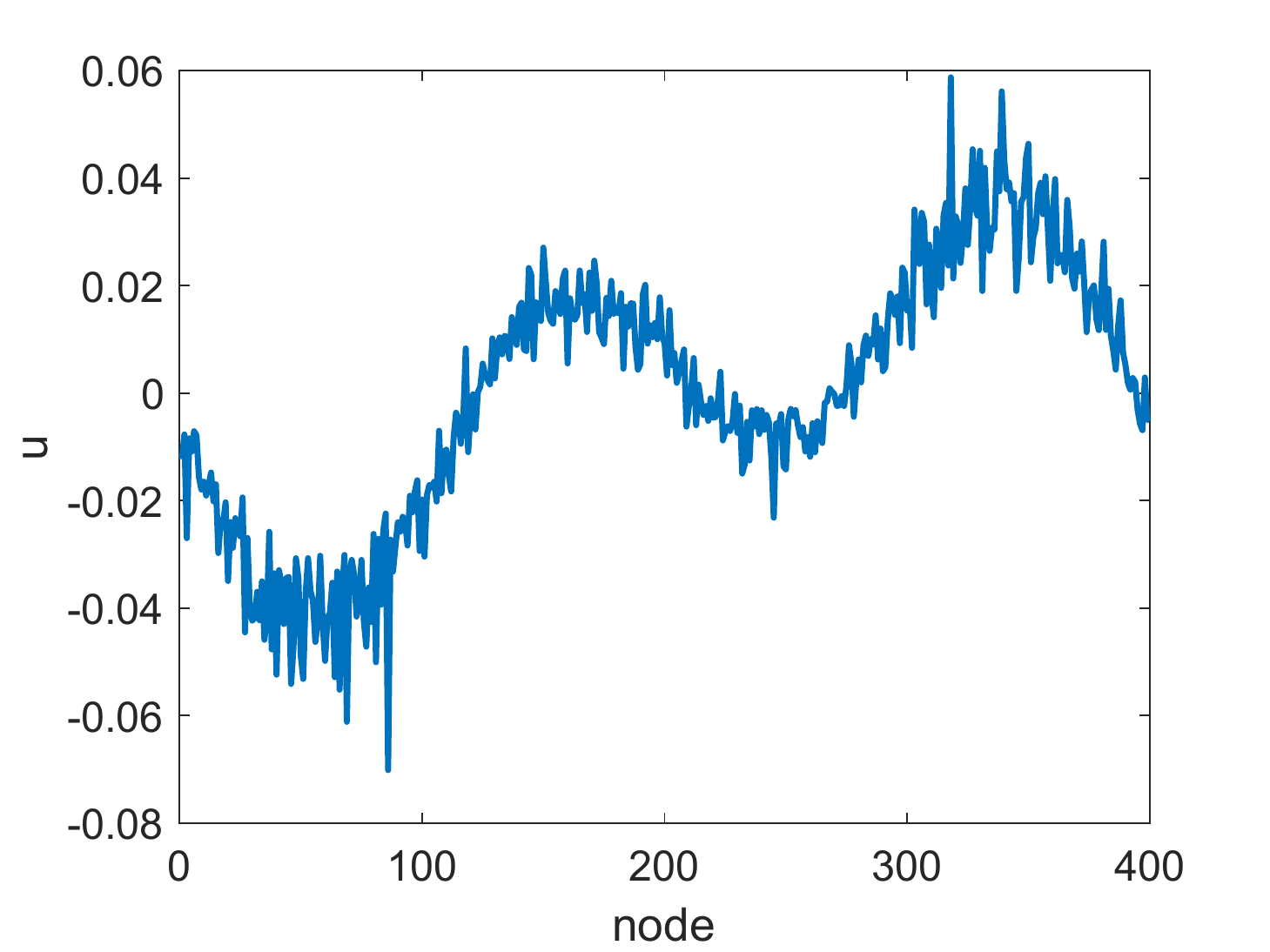}}
   \caption{A transcritical bifurcation occurring due to the presence of a $2:1$ resonance in the graphon \eqref{eq:Wres}. On the left is the bifurcation diagram near $(u,\e) = (0,0)$ for the discrete Swift--Hohenberg equation with $(\kappa,r,b) = (-144.90,1,1)$.  On the right is the bifurcating solution for $\e=-0.01$ where we notice that it resembles a superposition of the two bifurcating linear modes.   }
    \label{fig:resonance}
\end{figure}


\subsection{Example: Bipartite graphs}

For the final numerical exploration we turn to bipartite graphons. Recall from Section~\ref{sec:graphon} that a bipartite graphon is given by
\begin{equation}\label{Bipartite}
	W(x,y) = \begin{cases}
			p & \min\{x,y\} \leq \alpha,\ \max\{x,y\} > \alpha \\
			0 & \mathrm{otherwise}
		\end{cases}
\end{equation}
with parameters $p,\alpha \in (0,1)$. Recall further that if $\alpha \neq \frac 1 2$, then $\mathrm{Deg}(x)$, the degree function defined in \eqref{DegreeFn}, is non-constant and therefore the results of Theorem~\ref{thm:Random} do not apply to the finite node graph Laplacians formed using the graphon $W$. Despite this analytical shortcoming, our numerical results indicate that Theorem~\ref{thm:Random} still applies to bipartite graphs, thus allowing one to study large bipartite graphs using bipartite graphons. Our goal is to illustrate this to the reader throughout this subsection. We begin with the following proposition that details the spectrum of bipartite graphon Laplacians.

\begin{proposition}\label{prop:Bipartite} 
Fix $p,\alpha \in (0,1)$ and let $W$ be a bipartite graphon. Then, the bipartite graphon Laplacian satisfies $\mathcal{L}(W) f = \lambda f$ for some $\lambda \in \mathbb{C}$ and $f \in L^2$, if and only if, one of the following is true:
\begin{enumerate}
	\item $\lambda = 0$ and $f$ is constant,
	\item $\lambda = -p\alpha$, $f(x) = 0$ for all $x \in [0,\alpha)$, and $\int_\alpha^1 f(x)\rmd x = 0$,
	\item $\lambda = -p(1-\alpha)$, $f(x) = 0$ for all $x \in (\alpha,1]$, and $\int_0^\alpha f(x)\rmd x = 0$, 
	\item $\lambda = -p$, there exists a constant $C \in \R$ such that $f(x) = (1-\alpha)C$ for all $x \in [0,\alpha)$, and $f(x) = -\alpha C$ for all $x \in (\alpha,1]$.  
\end{enumerate}
\end{proposition}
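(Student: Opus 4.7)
The plan is to exploit the piecewise-constant structure of the bipartite graphon to reduce the eigenvalue equation to a pair of algebraic conditions on two integrals of $f$. First I would compute the degree function explicitly, obtaining $\mathrm{Deg}(x) = p(1-\alpha)$ for $x \in [0,\alpha)$ and $\mathrm{Deg}(x) = p\alpha$ for $x \in (\alpha,1]$, so that the eigenvalue equation $\mathcal{L}(W)f = \lambda f$ splits as
\begin{equation*}
	p\!\int_\alpha^1 f(y)\,\rmd y - p(1-\alpha)f(x) = \lambda f(x), \quad x \in [0,\alpha),
\end{equation*}
and
\begin{equation*}
	p\!\int_0^\alpha f(y)\,\rmd y - p\alpha f(x) = \lambda f(x), \quad x \in (\alpha,1].
\end{equation*}
Introducing the abbreviations $A := \int_0^\alpha f\,\rmd y$ and $B := \int_\alpha^1 f\,\rmd y$, these become
\begin{equation*}
	(\lambda + p(1-\alpha))f(x) = pB \text{ on } [0,\alpha), \qquad (\lambda + p\alpha)f(x) = pA \text{ on } (\alpha,1].
\end{equation*}

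The key observation is that the right-hand sides are independent of $x$, so $f$ must be piecewise constant on the two pieces whenever the corresponding prefactors are nonzero. I would then split into three cases according to whether $\lambda + p(1-\alpha)$ or $\lambda + p\alpha$ vanishes. In the generic case where neither vanishes, write $f = c_1$ on $[0,\alpha)$ and $f = c_2$ on $(\alpha,1]$, whence $A = \alpha c_1$ and $B = (1-\alpha)c_2$. Substitution produces the linear system whose compatibility condition reduces to $(\lambda + p(1-\alpha))(\lambda + p\alpha) = p^2\alpha(1-\alpha)$, i.e. $\lambda^2 + p\lambda = 0$. This yields the two values $\lambda = 0$ (forcing $c_1 = c_2$, hence case (i)) and $\lambda = -p$ (forcing $c_1/c_2 = -(1-\alpha)/\alpha$, hence case (iv) after setting $C := c_1/(1-\alpha)$). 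In the degenerate case $\lambda = -p\alpha$, the second equation forces $A = 0$, and plugging back into the first equation shows that $f$ is constant on $[0,\alpha)$ with value $B/(1-2\alpha)$ when $\alpha \neq 1/2$, and then $A = 0$ forces this constant to be zero; the remaining freedom is $f$ arbitrary on $(\alpha,1]$ subject to $B = 0$, which is case (ii). Case (iii) follows symmetrically, and the boundary $\alpha = 1/2$ is a harmless coincidence of these two eigenvalues for which the same reasoning works directly from $pA = pB = 0$.

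For the converse direction, I would simply verify that each of the four listed families $(\lambda, f)$ satisfies the eigenvalue equation by direct substitution into the piecewise form above. The main subtlety, and the only real obstacle, is keeping track of the edge case $\alpha = 1/2$ (where the two ``intermediate'' eigenvalues coalesce), and correctly handling the implication that piecewise-constant on two pieces with vanishing integral over one piece actually forces $f$ to vanish identically on that piece rather than merely average to zero. Everything else is routine linear algebra on the two-dimensional quotient defined by the map $f \mapsto (A,B)$.
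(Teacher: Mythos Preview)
Your approach is essentially identical to the paper's: both compute the piecewise form of $\mathcal{L}(W)f$, reduce to the two equations $(\lambda + p(1-\alpha))f = pB$ on $[0,\alpha)$ and $(\lambda + p\alpha)f = pA$ on $(\alpha,1]$, and then perform a case split. The paper's proof is a one-line ``work through the distinct cases,'' and what you have written is exactly that work carried out in detail, so there is nothing to add.
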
 

\begin{proof}
	With $W(x,y)$ as given in \eqref{Bipartite}, one finds that 
	\begin{equation}
		[\mathcal{L}(W)f](x) = \begin{cases}
			p\int_\alpha^1 f(y)\rmd y - f(x)p(1-\alpha), & x \leq \alpha, \\
			p\int_0^\alpha f(y)\rmd y - f(x)p\alpha, & x > \alpha,
		\end{cases}
	\end{equation} 
	for all $x \in [0,1]$. The result then follows by setting $\mathcal{L} f = \lambda f$ and working through the distinct cases. 
\end{proof}

\begin{figure} 
    \centering
     \subfigure{\includegraphics[width=0.43\textwidth]{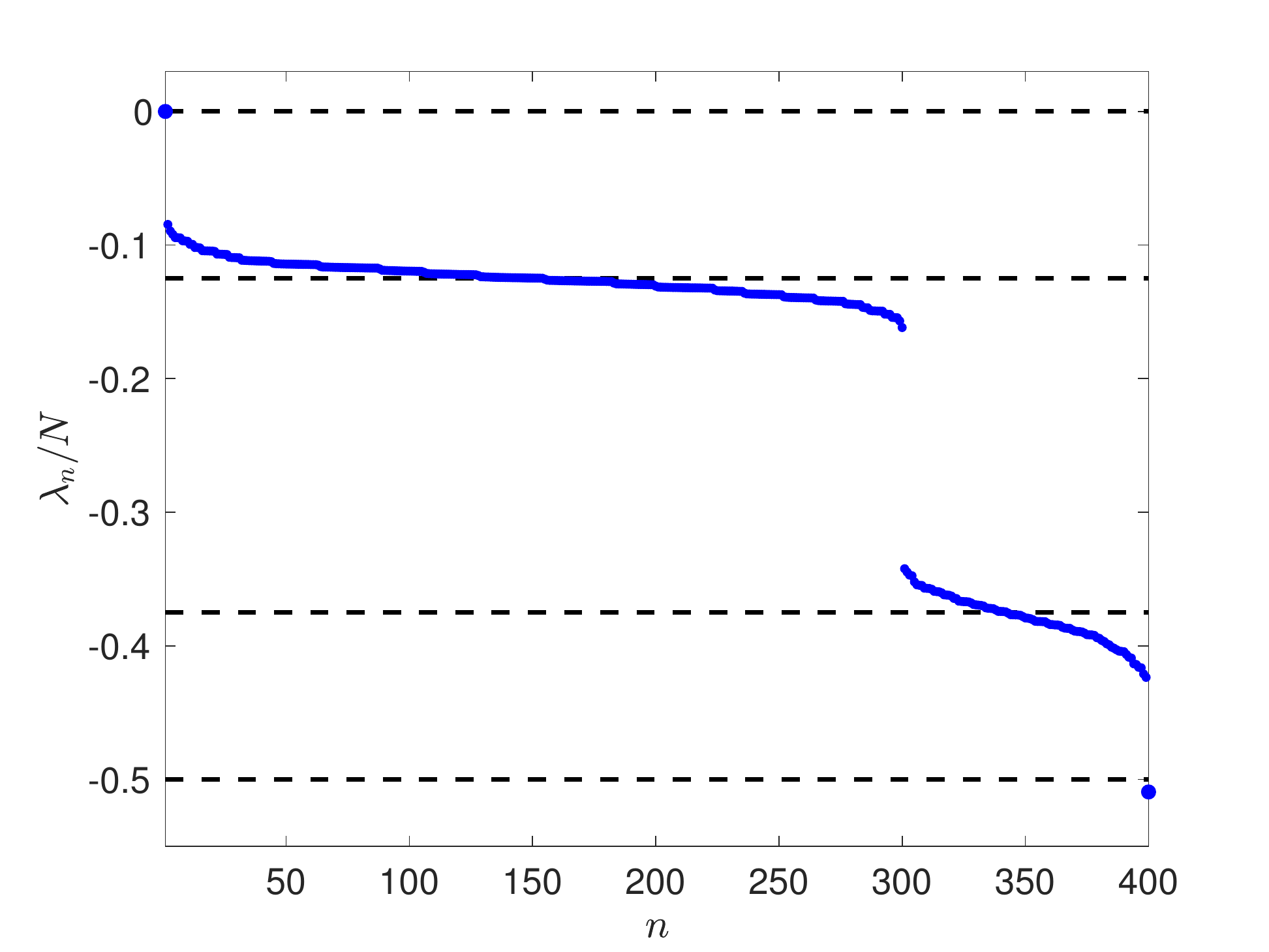}}
 \subfigure{\includegraphics[width=0.43\textwidth]{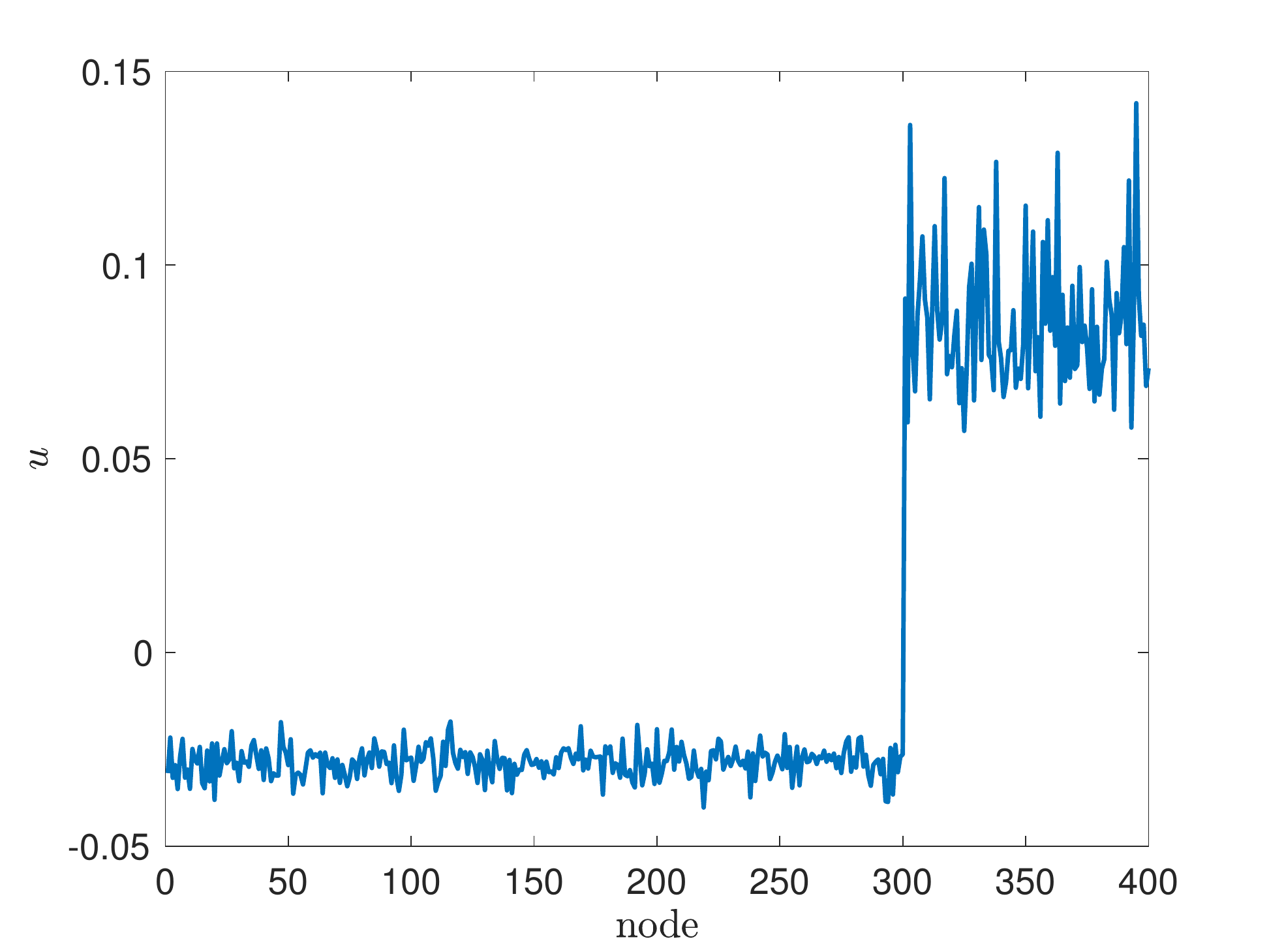}}
   \caption{On the left are the eigenvalues, $\lambda_n$ (blue dots), of a random bipartite graph with $N = 400$ nodes and parameter values $(p,\alpha) = (0.5,0.75)$. Eigenvalues are normalized by the number of nodes so they can be seen to be clustering about the values of eigenvalues of the bipartite graphon Laplacian (dashed horizontal lines) from Proposition~\ref{prop:Bipartite}. On the right is the eigenfunction associated to $\lambda_{400}$, the isolated eigenvalue near $-pN$, which is approximately given by a piecewise constant function, closely resembling the eigenfunction of the bipartite graphon Laplacian that spans the $\lambda = -p$ eigenspace.}
    \label{fig:Bipartite}
\end{figure}

From Proposition~\ref{prop:Bipartite} we see that there are only four possible choices for eigenvalues of the bipartite graphon Laplacian. Moreover, eigenvalues $\lambda = 0,-p$ are isolated with one-dimensional eigenspaces, while the eigenvalues $\lambda = -p\alpha,-p(1-\alpha)$ have infinite-dimensional eigenspaces. This can be observed from the fact that the eigenfunctions can be obtained from any mean-zero function on either $[0,\alpha]$ or $[\alpha,1]$. Such functions can be represented uniquely as Fourier series with zero constant term, meaning that the trigonometric terms that make up the series can be used to span the entire eigenspace. Most important to our work here is that numerically we observe that the random graph Laplacians generated using bipartite graphons have eigenvalues that cluster around the four eigenvalues of the bipartite graphon Laplacian. We illustrate this in Figure~\ref{fig:Bipartite} where we present the eigenvalues of a random bipartite graph Laplacian with $N = 400$ nodes and parameter values $(p,\alpha) = (0.5,0.75)$. Upon normalizing by the size of the graph, one can see the clustering of the random graph eigenvalues near the eigenvalues of the graphon Laplacian. We also provide the eigenfunction of the smallest eigenvalue, $\lambda_{400} \approx -pN = -200$, which resembles a step function that is constant on $[0,\alpha)$ and $(\alpha,1]$, similar to the eigenfunction that spans the one-dimensional eigenspace of $\lambda = -p$ for the graphon Laplacian. We have documented nearly identical results for other choices of the parameters $p$ and $\alpha$. 

With the isolated eigenvalues $\lambda =0,-p$ of the bipartite graphon Laplacian, one may follow the analysis of Sections~\ref{sec:TuringonW} and \ref{sec:Turinggraph} to identify the pattern forming bifurcations for both the continuous-space graphons and the discrete-space random graphs. Interestingly, one finds that the graphon bifurcations are transcritical, as opposed to the pitchfork bifurcations in the ring network case. Since transcritical bifurcations are robust with respect to perturbations, one similarly expects large random bipartite graphs to experience transcritical pattern-forming bifurcations from the eigenvalues near $0$ and $-pN$, where $N$ is the number of nodes in the network. Although not provided here for brevity, our numerical results have confirmed this by observing transcritical bifurcations from these eigenvalues on large random bipartite graphs for a range of parameters $p$ and $\alpha$. 

Finally, we similarly expect that multipartite graphons can be used to study steady-state bifurcations from isolated eigenvalues of large random multipartite graphs with equally as promising results to those that we have provided for ring networks in the preceding sections. We conjecture that our results in Theorem~\ref{thm:Random} hold for multipartite graphons as well, since the only major distinction in the analysis is that multipartite graphons have $\mathrm{Deg}(x)$ being a piecewise constant function, as opposed to the ring network case where they are constant.


\section{Discussion}\label{sec:Discussion}

In this work we have shown how graphons can be used to study pattern-forming Turing bifurcations in spatially-discrete Swift--Hohenberg equations on large random graphs. In particular, the work in Section~\ref{sec:graphon} provides the necessary functional analytical theory to detail how isolated elements of the graphon's spectrum can be used to approximate eigenvalues and eigenvectors of both deterministic and random graphs formed from the graphon. Importantly, the study of graphons allows one to quantify the spectrum of classes of large random graphs, thus providing a novel avenue to study pattern formation on networks which goes far beyond working with regular and idealized graph structures, such as chains and rings. In Section~\ref{sec:TuringonW} we described Turing bifurcations from the trivial state $u = 0$ in the infinite-dimensional graphon Swift--Hohenberg equation. In Section~\ref{sec:Turinggraph} we then leveraged the results of Section~\ref{sec:graphon} to compare the graphon bifurcations with the Turing bifurcations in the Swift--Hohenberg equation posed on large random graphs. We found that the pitchfork bifurcations observed in the graphon setting degenerates into a transcritical bifurcation with a saddle-node nearby when space is discretized to obtain the finite-dimensional graphs.        

Our numerical results in Section~\ref{sec:numerics} provide positive affirmation of our theoretical work in the preceding sections. That is, we were able to observe the degeneration from pitchfork to transcritical bifurcations coming from isolated, non-resonant eigenvalues in the spectrum of small-world random graphs. We also provided an example of a graphon which has resonant eigenvalues, thus necessitating Theorem~\ref{thm:randomresonance}  to describe these Turing bifurcations, again leading to a confirmation of our theoretical results. Our final numerical consideration was that of bipartite graphons, which are not covered by the work in this manuscript. Nonetheless, we found that again the spectrum of the large random bipartite graphs asymptotically cluster about the spectrum of bipartite graphons, as was shown in Figure~\ref{fig:Bipartite}. This indicates that our results are applicable to a wider class of graphons than the ring networks that we restricted ourselves to throughout this work. Hence, in future investigations it will be important to determine the widest possible class of graphons for which our results can apply. Conversely, it is necessary to identify graphons for which our theoretical results herein differ so that follow-up studies can identify their expected pattern-forming bifurcations as well. 

{\color{black} Some investigations of graphons elect to take the underlying spatial domain as an arbitrary probability space $\Omega$, as opposed to the unit interval $[0,1]$. Such generality may not be entirely useful for future investigations though as \cite[Theorem~7.1]{Janson} states that any graphon on a probability space $\Omega$ is equivalent to a graphon on the unit interval $[0,1]$ equipped with the Lesbegue measure. Thus, for many investigations it suffices to restrict oneself to the setting presented in Section~\ref{sec:graphon}. A notable exception to this is when one wishes to understand the effect of a specific spatial structure that is not captured by a one-dimensional interval. For example, one could consider {\em toroidal} graphons on $\Omega = [0,1]^k$, for some $k \geq 2$, which take the form $W(x,y) = T(|x_1 - y_1|, \dots, |x_k - y_k|)$ for each $x = (x_1.\dots,x_k)$ and $y = (y_1,\dots,y_k)$ in $\Omega$ with $T$ being 1-periodic in each of its $k$ arguments. These toroidal graphons generalize the ring graphons with $k = 1$ and their resulting graphs have a multiply-periodic toroidal structure, as opposed to the singly-periodic ring structure studied herein. We have refrained providing such a generalization of our work here since the difficulty of following the added multi-dimensional notation would make the presentation of the results difficult to parse. However, we note that all of our results in this paper can be extended to toroidal graphons by considering hypercubes that discretize $\Omega$ to build the finite vertex graphs and multi-dimensional Fourier series expansions to arrive at the same bifurcation results.

Aside from toroidal graphons, another emerging area for future investigation are those of geometric graphons. These graphons have state space $\Omega = \mathbb{S}^{d-1}$, the $d$-sphere, and take the form $W(x,y) = f(\langle x, y\rangle)$, for some function $f:[-1,1] \to [0,1]$ \cite{GeoGraph1,GeoGraph2}. The resulting finite-vertex random graphs generated from these graphons are known as {\em geometric random graphs}. Such graphs possess the property that they can generate community structure which we expect may lead to localized pattern formation. Thus, the study of geometric graphons presents itself as an important area of future work, with a first step being a complete characterization of their spectrum. }

Even in the context of ring graphons studied in this work, it is important to emphasize that our construction of deterministic and random graphs from graphons results in dense ring networks. That is, the degree of each vertex positively scales with the number of vertices in the graph. Related work motivated by the study of coupled oscillators into semilinear heat equations has extended results from dense graphs to sparse graphs, meaning that the edge density goes to zero as the number of vertices goes to infinity \cite{k17}. Therefore, it is our intention to extend the work in this manuscript to the study of pattern-formation on sparse random graphs, potentially by taking advantage of the recent developments in \cite{borgs19} which details how to use graphons to construct sequences of sparse graphs.  

Finally, we comment on localized patterns on the graph that may be a consequence of the Turing bifurcations detailed in this manuscript. Note that the degeneration of the pitchfork bifurcations from graphon networks to graph networks typically leads to regions of bistability in the spatially-discrete Swift--Hohenberg equation. This can be observed, for example, in Figure~\ref{fig:bifgood} for the small-world network. The competition between states in regions of bistability has been shown to result in steady-state localized patterns for which a connected subset of the elements are near the stable patterned state, while the remaining elements on the graph are near the homogeneous state $u = 0$ \cite{Bramburger1D,BramburgerSquare,TianRing}. It remains to identify if such localized states exist in the small regions of bistability observed in the random graph networks studied here. Since the networks are dense, one expects that the bifurcation curves of localized solutions (if they exist) strongly resemble the simple closed curves of localized patterns on ring networks with all-to-all coupling detailed in \cite{TianRing}. Initial numerical investigations have been unsuccessful in identifying such localized patterns, but without analytical proofs that such steady-state solutions exist or not these investigations are inconclusive. Therefore, we leave the study of localized steady-states on random graphs to a follow-up investigation.

\section*{Acknowledgments} The research of MH was partially supported by the National Science Foundation through NSF-DMS-2007759.  MH is grateful Ben Concepcion for contributing to a preliminary iteration of this project.  {\color{black} The authors thank the anonymous referees whose comments improved the paper.  }

\bibliographystyle{abbrv}
\bibliography{GraphTuring}

\begin{thebibliography}{10}

\bibitem{amari77}
S.-I. Amari.
\newblock Dynamics of pattern formation in lateral-inhibition type neural
  fields.
\newblock {\em Biological cybernetics}, 27(2):77--87, 1977.

\bibitem{GeoGraph1}
E.~Araya~Valdivia and D.~C. Yohann.
\newblock Latent distance estimation for random geometric graphs.
\newblock {\em Advances in Neural Information Processing Systems}, 32, 2019.

\bibitem{Borgs}
C.~Borgs, J.~Chayes, L.~Lov{\'a}sz, V.~S{\'o}s, and K.~Vesztergombi.
\newblock Limits of randomly grown graph sequences.
\newblock {\em European Journal of Combinatorics}, 32(7):985--999, 2011.

\bibitem{Borgs2}
C.~Borgs, J.~T. Chayes, H.~Cohn, and N.~Holden.
\newblock Sparse exchangeable graphs and their limits via graphon processes.
\newblock {\em arXiv preprint arXiv:1601.07134}, 2016.

\bibitem{borgs19}
C.~Borgs, J.~T. Chayes, H.~Cohn, and Y.~Zhao.
\newblock An {$L^p$} theory of sparse graph convergence {I}: {L}imits, sparse
  random graph models, and power law distributions.
\newblock {\em Trans. Amer. Math. Soc.}, 372(5):3019--3062, 2019.

\bibitem{BramburgerIsola}
J.~J. Bramburger.
\newblock Isolas of multi-pulse solutions to lattice dynamical systems.
\newblock {\em Proceedings of the Royal Society of Edinburgh Section A:
  Mathematics}, 151(3):916--952, 2021.

\bibitem{Bramburger1D}
J.~J. Bramburger and B.~Sandstede.
\newblock Spatially localized structures in lattice dynamical systems.
\newblock {\em Journal of Nonlinear Science}, pages 1--42, 2019.

\bibitem{BramburgerSquare}
J.~J. Bramburger and B.~Sandstede.
\newblock Localized patterns in planar bistable weakly coupled lattice systems.
\newblock {\em Nonlinearity}, 33(7):3500, 2020.

\bibitem{bressloff96}
P.~C. Bressloff.
\newblock New mechanism for neural pattern formation.
\newblock {\em Physical Review Letters}, 76(24):4644, 1996.

\bibitem{castets90}
V.~Castets, E.~Dulos, J.~Boissonade, and P.~De~Kepper.
\newblock Experimental evidence of a sustained standing turing-type
  nonequilibrium chemical pattern.
\newblock {\em Physical review letters}, 64(24):2953, 1990.

\bibitem{chiba19}
H.~Chiba and G.~S. Medvedev.
\newblock The mean field analysis of the {K}uramoto model on graphs {II}.
  {A}symptotic stability of the incoherent state, center manifold reduction,
  and bifurcations.
\newblock {\em Discrete Contin. Dyn. Syst.}, 39(7):3897--3921, 2019.

\bibitem{chiba18}
H.~Chiba, G.~S. Medvedev, and M.~S. Mizuhara.
\newblock Bifurcations in the {K}uramoto model on graphs.
\newblock {\em Chaos}, 28(7):073109, 10, 2018.

\bibitem{chow94}
S.-N. Chow, C.~Z. Li, and D.~Wang.
\newblock {\em Normal forms and bifurcation of planar vector fields}.
\newblock Cambridge University Press, Cambridge, 1994.

\bibitem{cross}
M.~C. Cross and P.~C. Hohenberg.
\newblock Pattern formation outside of equilibrium.
\newblock {\em Rev. Mod. Phys.}, 65:851--1112, Jul 1993.

\bibitem{DK}
C.~Davis and W.~M. Kahan.
\newblock The rotation of eigenvectors by a perturbation. iii.
\newblock {\em SIAM Journal on Numerical Analysis}, 7(1):1--46, 1970.

\bibitem{ermentrout79}
G.~B. Ermentrout and J.~D. Cowan.
\newblock A mathematical theory of visual hallucination patterns.
\newblock {\em Biological cybernetics}, 34(3):137--150, 1979.

\bibitem{gierer72}
A.~Gierer and H.~Meinhardt.
\newblock A theory of biological pattern formation.
\newblock {\em Kybernetik}, 12(1):30--39, 1972.

\bibitem{Glasscock}
D.~Glasscock.
\newblock What is... a graphon.
\newblock {\em Notices of the AMS}, 62(1), 2015.

\bibitem{haragus}
M.~Haragus and G.~Iooss.
\newblock {\em Local bifurcations, center manifolds, and normal forms in
  infinite-dimensional dynamical systems}.
\newblock Universitext. Springer-Verlag London, Ltd., London; EDP Sciences, Les
  Ulis, 2011.

\bibitem{hopfield84}
J.~J. Hopfield.
\newblock Neurons with graded response have collective computational properties
  like those of two-state neurons.
\newblock {\em Proceedings of the National Academy of Sciences},
  81(10):3088--3092, 1984.

\bibitem{ide16}
Y.~Ide, H.~Izuhara, and T.~Machida.
\newblock Turing instability in reaction-diffusion models on complex networks.
\newblock {\em Physica A: Statistical Mechanics and its Applications},
  457:331--347, 2016.

\bibitem{Janson}
S.~Janson.
\newblock {\em Graphons, cut norm and distance, couplings and rearrangements},
  volume~4 of {\em New York Journal of Mathematics. NYJM Monographs}.
\newblock State University of New York, University at Albany, Albany, NY, 2013.

\bibitem{k17}
D.~Kaliuzhnyi-Verbovetskyi and G.~S. Medvedev.
\newblock The semilinear heat equation on sparse random graphs.
\newblock {\em SIAM J. Math. Anal.}, 49(2):1333--1355, 2017.

\bibitem{keller70}
E.~F. Keller and L.~A. Segel.
\newblock Initiation of slime mold aggregation viewed as an instability.
\newblock {\em Journal of theoretical biology}, 26(3):399--415, 1970.

\bibitem{klausmeier99}
C.~A. Klausmeier.
\newblock Regular and irregular patterns in semiarid vegetation.
\newblock {\em Science}, 284(5421):1826--1828, 1999.

\bibitem{kondo17}
S.~Kondo.
\newblock An updated kernel-based {T}uring model for studying the mechanisms of
  biological pattern formation.
\newblock {\em J. Theoret. Biol.}, 414:120--127, 2017.

\bibitem{kondo10}
S.~Kondo and T.~Miura.
\newblock Reaction-diffusion model as a framework for understanding biological
  pattern formation.
\newblock {\em Science}, 329(5999):1616--1620, 2010.

\bibitem{kuehn19}
C.~Kuehn and S.~Throm.
\newblock Power network dynamics on graphons.
\newblock {\em SIAM J. Appl. Math.}, 79(4):1271--1292, 2019.

\bibitem{lovasz12}
L.~Lov\'{a}sz.
\newblock {\em Large networks and graph limits}, volume~60 of {\em American
  Mathematical Society Colloquium Publications}.
\newblock American Mathematical Society, Providence, RI, 2012.

\bibitem{lovasz06}
L.~Lov\'{a}sz and B.~Szegedy.
\newblock Limits of dense graph sequences.
\newblock {\em J. Combin. Theory Ser. B}, 96(6):933--957, 2006.

\bibitem{mccullen16}
N.~McCullen and T.~Wagenknecht.
\newblock Pattern formation on networks: From localised activity to turing
  patterns.
\newblock {\em Scientific reports}, 6(1):1--8, 2016.

\bibitem{medvedev14a}
G.~S. Medvedev.
\newblock The nonlinear heat equation on dense graphs and graph limits.
\newblock {\em SIAM J. Math. Anal.}, 46(4):2743--2766, 2014.

\bibitem{medvedev14}
G.~S. Medvedev.
\newblock The nonlinear heat equation on {$W$}-random graphs.
\newblock {\em Arch. Ration. Mech. Anal.}, 212(3):781--803, 2014.

\bibitem{mimura78}
M.~Mimura and J.~Murray.
\newblock On a diffusive prey-predator model which exhibits patchiness.
\newblock {\em Journal of Theoretical Biology}, 75(3):249--262, 1978.

\bibitem{moore05}
P.~K. Moore and W.~Horsthemke.
\newblock Localized patterns in homogeneous networks of diffusively coupled
  reactors.
\newblock {\em Physica D: Nonlinear Phenomena}, 206(1):121--144, 2005.

\bibitem{muolo19}
R.~Muolo, M.~Asllani, D.~Fanelli, P.~K. Maini, and T.~Carletti.
\newblock Patterns of non-normality in networked systems.
\newblock {\em Journal of Theoretical Biology}, 480:81--91, 2019.

\bibitem{nakao10}
H.~Nakao and A.~S. Mikhailov.
\newblock Turing patterns in network-organized activator--inhibitor systems.
\newblock {\em Nature Physics}, 6(7):544--550, 2010.

\bibitem{newell69}
A.~C. Newell and J.~A. Whitehead.
\newblock Finite bandwidth, finite amplitude convection.
\newblock {\em Journal of Fluid Mechanics}, 38(2):279--303, 1969.

\bibitem{Oliveira}
R.~I. Oliveira.
\newblock Concentration of the adjacency matrix and of the laplacian in random
  graphs with independent edges.
\newblock {\em arXiv preprint arXiv:0911.0600}, 2009.

\bibitem{othmer71}
H.~Othmer and L.~Scriven.
\newblock Instability and dynamic pattern in cellular networks.
\newblock {\em Journal of Theoretical Biology}, 32(3):507--537, 1971.

\bibitem{ouyang91}
Q.~Ouyang and H.~L. Swinney.
\newblock Transition from a uniform state to hexagonal and striped turing
  patterns.
\newblock {\em Nature}, 352(6336):610--612, 1991.

\bibitem{TianRing}
M.~Tian, J.~J. Bramburger, and B.~Sandstede.
\newblock Snaking bifurcations of localized patterns on ring lattices.
\newblock {\em IMA Journal of Applied Mathematics}, 06 2021.

\bibitem{tropp2012user}
J.~A. Tropp.
\newblock User-friendly tail bounds for sums of random matrices.
\newblock {\em Foundations of computational mathematics}, 12(4):389--434, 2012.

\bibitem{turing}
A.~M. Turing.
\newblock The chemical basis of morphogenesis.
\newblock {\em Philosophical Transactions of the Royal Society of London Series
  B}, 237(641):37--72, 1952.

\bibitem{GeoGraph2}
E.~A. Valdivia.
\newblock Random geometric graphs on euclidean balls.
\newblock {\em arXiv preprint arXiv:2010.13734}, 2020.

\bibitem{volkening18}
A.~Volkening and B.~Sandstede.
\newblock Iridophores as a source of robustness in zebrafish stripes and
  variability in danio patterns.
\newblock {\em Nature communications}, 9(1):1--14, 2018.

\bibitem{wolfrum12}
M.~Wolfrum.
\newblock The {T}uring bifurcation in network systems: Collective patterns and
  single differentiated nodes.
\newblock {\em Physica D: Nonlinear Phenomena}, 241(16):1351--1357, 2012.

\bibitem{AltDK}
Y.~Yu, T.~Wang, and R.~J. Samworth.
\newblock A useful variant of the {D}avis--{K}ahan theorem for statisticians.
\newblock {\em Biometrika}, 102(2):315--323, 2015.

\end{thebibliography}

\end{document}